\newcommand{\bbE}{\mathbb{E}}
\newcommand{\bbP}{\mathbb{P}}
\newcommand{\bbind}{{1}}
\newcommand{\bbR}{\mathbb{R}}
\newcommand{\bbN}{\mathbb{N}}
\newcommand{\calB}{\mathcal{B}}
\newcommand{\calF}{\mathcal{F}}
\newcommand{\scrI}{\mathscr{I}}
\newcommand{\iid}{{IID}\xspace}
\DeclareMathOperator{\adj}{adj}
\DeclareMathOperator{\diag}{diag}
\newcommand{\lrbkt}[1]{\{#1\}}
\newcommand{\asconv}{\xrightarrow{\text{a.s.}}}
\newcommand{\pa}{{PA}\xspace}
\newcommand{\mle}{maximum likelihood estimator\xspace}
\newcommand{\qmle}{pseudo maximum likelihood estimator\xspace}
\newcommand{\sumk}{\sum_{k=1}^\infty}
\newcommand{\sumj}{\sum_{j=1}^\infty}
\newcommand{\sss}{\scriptscriptstyle}
\newcommand{\ra}{\rightarrow}
\newcommand{\rkn}{\Hat{r}_k(n)}
\newcommand{\pkzero}{p_k^{\sss (0)}}
\newcommand{\qkzero}{q_k^{\sss (0)}}
\newcommand{\ee}{{empirical estimator}\xspace}
\theoremstyle{plain}
\newtheorem{theorem}{Theorem}[section]
\newtheorem{lemma}[theorem]{Lemma}
\newtheorem{corollary}[theorem]{Corollary}
\newtheorem{proposition}[theorem]{Proposition}
\theoremstyle{remark}
\numberwithin{equation}{section}
\newcommand\malt{{\lambda^*}}
\renewcommand\qkzero{{p^{\sss (0)}_{>k}}}
\title{Statistical Inference in Parametric Preferential Attachment Trees}
\author{Fengnan Gao\footnote{Fudan University and Shanghai Center for Mathematical Sciences. Email: \url{fngao@fudan.edu.cn}} \; and\; Aad van der Vaart\footnote{TU Delft. Email: \url{a.w.vandervaart@tudelft.nl}}}
\date{(\today)}
\date{(August 16, 2022)}
\begin{document}
\maketitle

\begin{abstract}
    The preferential attachment (\pa) model is a popular way of modelling dynamic social networks, such as collaboration networks.
    Assuming that the \pa function takes a parametric form, we propose and study the maximum likelihood estimator of the parameter.
    Using a supercritical continuous-time branching process framework, we prove the almost sure consistency
    and asymptotic normality of this estimator.
    We also provide an estimator that only depends on the final snapshot of the network and prove its consistency,
    and its asymptotic normality under general conditions.
    We compare the performance of the estimators to a nonparametric estimator in a small simulation study.
\end{abstract}

\section{Introduction and Notation}
\label{SectionIntroduction}

We study the preferential attachment (\pa) model---a \textit{dynamic} network model which in our setup evolves
from an initial stage consisting of a  single node of degree one (a root node with a loose edge or dead parent)  by
recursively adding at each step a single node and edge.
The incoming node connects to a node in the existing network with probability proportional
to a non-decreasing function of its degree.
The term \textit{preferential attachment} reflects that nodes of higher degrees (``the rich'') inspire more incoming connections (``get richer''),
thus leading to ``the-rich-get-richer'' effect, or the so-called Matthew effect.

For a precise description, define $[n] = \{ 1,\dots, n\}$, and denote the nodes at time $n$ by
$\{ v_i \}_{i\in [n]}$. If the corresponding degrees are  $\{ d_i(n)\}_{i\in [n]}$,
then the node $v_{n+1}$ connects to the existing node $v_i\in\{v_l\}_{l\in [n]}$ with probability proportional
to $f(d_i(n))$, for a given function $f:\bbN_+\to\bbR_+$, i.e., with probability
$$\frac{f(d_i(n))}{ \sum_{j=1}^n f(d_j(n))}.$$
We refer to $f$ as the \textit{preferential attachment} \textit{function},  to $f(k)$ as the \textit{preference} for a node of degree $k$,
and to the denominator in the display as the \textit{total preference} at time $n$.
After the incoming node $v_{n+1}$ has made its choice, the scheme repeats itself at time $n+1$
with the set of existing nodes $\{v_i\}_{i\in [n+1]}$, their updated degrees $\{ d_i(n+1)\}_{i\in [n+1]}$, and the incoming node $v_{n+2}$.
The model may evolve to reach any number of nodes, whence we obtain a random graph that evolves over time.

The \pa model received its modern conception and fame in connection to the prevalence of so-called
\textit{scale-free} networks in the real world, as shown by Albert Barabási and his co-authors (\citet{Barabasi99emergenceScaling, barabasi1999mean,barabasi2000scale}),
and in many subsequent scientific studies in different disciplines.  A simple version of the \pa model
appeared in \citet{Barabasi99emergenceScaling} as a possible explanation of the emergence of the scale-free property
and henceforth the \pa model has remained one of the few \emph{dynamic} models to produce scale-freeness.

Scale-freeness is usually defined in terms of polynomial decay
of the \textit{empirical degree distribution} $P_k(n)$, which is the proportion of nodes of degree $k$ at time $n$:
$$ P_k(n) = \frac{1}{n} \sum_{i \in [n]} \bbind_{\{d_i(n) = k\}}=:\frac 1n N_k(n).$$
In the case that the \pa function $f$ is affine with $f(k) = k + \a$ with $\alpha > -1$, it is known
that $P_k(n) \rightarrow p_k$ almost surely, as $n \rightarrow \infty$, for any fixed $k$, where the limit $(p_k)_{k=1}^\infty$
is a proper probability distribution with atoms proportional to $k^{-(3+\a)}$, as $k\rightarrow\infty$
(up to a slow-varying factor, see \citet{mori2002random,hofstadcomplexnet}).
The Barabási–Albert model is the special case with $f(k) = k$ and gives decrease proportional to $k^{-3}$.

Two different scenarios arise when we no longer restrict attention to the affine \pa functions---superlinear and sublinear.
Roughly speaking, in the superlinear case the \pa function $f$ grows faster than any linear function and the resulting \pa tree looks
like a star with one dominating node (with high probability, the first or the oldest one) connecting to virtually almost every other node, see \citet{oliveira2005connectivity} and references therein.
In the sublinear case $f$ grows more slowly than any linear function, yielding more interesting \pa trees.
The slower growth of strictly sublinear $f$ yields less preference towards high-degree nodes, rendering the emergence of high-degree nodes less likely,
and leading to limiting degree distributions with in general lighter tails than power laws, which have fine and subtle details.


In this paper we consider the statistical estimation of the \pa function from an observed network. We adopt
a parametric specification of a sublinear \pa function and consider statistical inference on the parameter.
A prototype of such a model is  $f(k) = (k + \alpha)^\beta$, for parameters $\a$ and $\beta \le 1$.
This includes the submodel $f(k) = k^{\beta}$, for $\beta\le 1$, considered in \cite[Section 5.8]{barabasi2016network}, and the affine linear
model $f(k)=k+\alpha$. The latter model was considered in \cite{gao2017asymptotic} by relatively direct arguments and martingale methods,
which break down for more general models. The main contribution of the present paper is to exploit the framework
of supercritical Malthusian branching processes due to
\cite{jagers1975branching} and \cite{nerman1981convergence}, and applied to derive the limiting degree
distribution in \pa models by \cite{rudas2007random}, to analyze general parametric models.
The branching process framework allows to study the likelihood function and to prove the consistency and asymptotic normality of the \mle.

We also propose a Wald-type test to test the null hypothesis that the \pa function is affine, thus allowing a test
for scale-freeness versus a well-defined alternative. In the case that only a final snapshot and not the evolution history is observed,
we propose a history-free remedy to the likelihood function and obtain a \qmle, which is shown to be consistent in general and asymptotically normal under general conditions, and those conditions are verified for the case where the \pa function becomes
constant after reaching a certain degree.

An alternative to parametric estimation is the nonparametric approach of \cite{gao2017consistent}, who introduce an empirical estimator
and show that this is consistent for general \pa functions $f$. It is  unknown whether this empirical estimator is asymptotically normal.
We show by simulation that the maximum likelihood estimator of the present paper is significantly more efficient
if the parametric model is correctly specified.

\subsection{Outline of the paper}
The paper is organized as follows.
We present the likelihood and the \mle in Section~\ref{sec:mle}.
Section~\ref{subsec:continuous-tree} introduces supercritical Malthusian branching processes and
deduce the results needed for the present paper (with a more formal introduction in the
appendix, Section~\ref{sec:rooted-tree}).
In Section~\ref{sec:param-consistency}, we prove that the \mle is consistent and in
Section~\ref{sec:param-asymp-normality} that it is asymptotically normal.
To overcome the problem of relying on the entire evolution history of the network, we propose in
Section~\ref{sec:remedy-history} a \qmle, which depends only on the final snapshot of the tree,
and give conditions for its asymptotic normality.
Section~\ref{sec:connect-ee-qmle} gives a new perspective on the \ee, presenting this as a nonparametric version of the \qmle.
In Section~\ref{sec:simu} we present simulation results that demonstrate the performance of the \mle and \qmle,
and compare this to the empirical estimator. 
Section~\ref{sec:proofs} collects all proofs to the results. 
In a second section \ref{sec:janson-truncated} of the appendix we
prove the asymptotic normality of the empirical degree distribution in a new case of special interest,
and give a new proof in the affine case.

\subsection{Notation}
Write $\bbN_+$ for the set of positive natural numbers $\lrbkt{1,2,\dots,}$ and $\bbN$ for the set of natural numbers including zero.
Write $\bbR_+ = (0, \infty)$.
For a sequence $(a_k)_{k=1}^\infty$, define $a_{>k} = \sum_{l >k} a_l$.
Let $N_k(t)$ be the number of nodes of degree $k$ in the network at time $t$,
and $P_k(t) = N_k(t)/t$ the proportion of such nodes.
For a given function $h: \bbN_+\to\bbR$ set  $S_h(t) = \sum_{k=1}^\infty h(k) N_k(t)$.
The superscript $^{\sss (0)}$ stresses that a quantity, such as the limiting degree distribution  $\pkzero$,
is  considered under the true parameter $\theta_0$.
For a vector $v = (v_1, \dots, v_d)^T \in \bbR^d$, let $\| v\| := \| v\|_\infty = \max_{ 1\le i \le d} |v_i|$;
and for a matrix $A \in \bbR^{m\times n}$, set $\| A\| : = \max_{i,j} | A_{ij}|$.
Define the diagonal matrix $\diag(a_1, \dots, a_d)$ by $\bigl(\diag(a_1,\dots,a_d)\bigr)_{ij} = \bbind_{\{i=j\}} a_i$.
Define $u_- = \max(0, -u)$ for $u \in \bbR$.
We write $a \wedge b = \min(a, b) $ for $a, b \in \bbR$.
When $c b_n \le a_n \le C b_n$ for some positive constants $c$ and $C$, we write $a_n \asymp b_n$.

\subsection{Model}
\label{SectionModel}
Throughout the paper $\{f_\theta: \theta\in\Theta\}$ is a collection of  non-decreasing functions $f_\q: \bbN_+ \to \bbR_+$ indexed
by a subset $\Theta\subset\bbR^d$.  It is assumed that every element of the family satisfies one of the two possibilities:
\begin{enumerate}[(i)]
	\item $f_\theta(k) \le C k^{\beta}$ for every $k\in\bbN$ for some positive constants $C$ and $\beta <1$.
	\item $f_\theta (k)= k + \alpha$ for every $k \in \bbN$, for some $\a>-1$.
\end{enumerate}%
It is also assumed that $\q\mapsto f_\q(k)$ is twice continuously differentiable with derivatives denoted by
$\dot f_\q(k)$ and $\ddot f_\q(k)$, which are a vector (gradient) and a matrix (Hessian)  in the case of a multidimensional parameter.
Differentiation with respect to the parameter $\theta$ is also denoted by a dot in general.

\section[Construction of the MLE]{Construction of the Maximum Likelihood Estimator}
\label{sec:mle}
Let $N_k(t)$ be the number of nodes of degree $k\in\bbN$ in the graph with nodes $v_1,\ldots, v_t$.
Initially there is a single node $v_1$ with degree one and hence $N_1(1) = 1$,  and we can set $N_k(1) = 0$, for  every $k \ge 2$,
to define a complete degree sequence. If $D_t$ denotes the degree of the node to which the node
$v_{t}$ is attached, then, for $t\ge 2$,
\begin{equation}
	\begin{aligned}
		N_k(t) = N_k(t-1) + \bbind_{\lrbkt{D_t = k -1}} - \bbind_{\lrbkt{D_t = k}} + \bbind_{\lrbkt{k=1}}.
	\end{aligned}
	\label{eqn:degree-evolution}
\end{equation}
The random graph evolves as a Markov process, and hence the likelihood factorizes as the conditional
likelihoods of the new node given the current tree. Any of the existing nodes may be chosen to attach the new node, but only the
degree $D_t$ of this node is important for the value of the likelihood, which takes the form
\begin{equation*}
	L_n(f_\theta) = \prod_{t=2}^{n} \frac{f_\theta(D_t)N_{D_t}(t-1)}{S_{f_\theta}(t -1 )},
\end{equation*}
where the norming ``constant'' $S_{f_\theta}(t-1)=\sumk f_\q(k) N_k(t-1)$ is the total preference
in the graph with nodes $v_1,\ldots, v_{t-1}$ given the \pa function $f_\theta$.
The total preference can be computed recursively
by the rule $S_f(t) = S_f(t-1) + f(D_{t} +1) - f(D_{t}) + f(1) $, for $t \ge 2$, with the initialization $S_f(1) =  f(1)$.
In particular, the total preference at stage $t$ can be expressed in the degree sequence up to time $t-1$,
and the full likelihood depends on the data only through $D^{(n)}= (D_t)_{t=2}^{n}$.
The normalized log-likelihood up to the term $\sum_{t=2}^n\log N_{D_t}(t-1)$ is given by
\begin{equation}
	\label{eqn:iota-nn}
	\begin{aligned}
		\iota_n(f_\theta)
		 & = \frac{1}{n} \sum_{k=1}^\infty \log f_\theta (k) \sum_{t=2}^{n} \bbind_{\lrbkt{D_t = k}} - \frac{1}{n} \sum_{t=2}^{n} \log S_{f_\theta}(t-1) \\
		 & = \sum_{k=1}^\infty \log f_\theta(k) P_{>k}(n) - \frac{1}{n} \sum_{t=2}^{n} \log S_{f_\theta}(t-1).
	\end{aligned}
\end{equation}
In the last step we use the identity $\sum_{t=2}^{n} \bbind_{\lrbkt{D_t = k}}=N_{>k}(n)$, which is essentially \citep[Lemma 1]{gao2017consistent} and results from the fact that
any node of degree strictly large $k$ in the tree at time $n$ must have been chosen  for attachment
while it had degree $k$, exactly once up until this time (namely when its degree went up from $k$ to $k+1$).

The derivative of the $\log$-likelihood is
\begin{equation}
	\label{eqn:log-likelihood}
	\begin{aligned}
		\dot{\ell}_n(f_\theta)
		= \frac{\partial }{ \partial \theta} \log L_n(f_\theta)
		 & = \sum_{t=2}^{n} \left[ \frac{\dot{f}_\theta}{ f_\theta} (D_t) - \frac{S_{\dot{f}_\theta}(t-1)}{ S_{f_\theta}(t-1)}\right]                                         \\
		 & = \sum_{t=2}^{n}\left[ \frac{\dot{f}_{\theta}}{ f_{\theta} }(D_t) - \bbE_{{\theta}}\Bigl[ \frac{\dot{f}_{\theta}}{f_{\theta}}(D_t) \Big| \calF_{t-1}\Bigr]\right],
	\end{aligned}
\end{equation}
where $(\calF_{t})_{t\ge 1}$ is the filtration generated by the stochastic process of the graph's evolution.  The last expression follows
from the fact that $\bbP(D_t = k | \calF_{t-1})=f_\theta(k)N_k(t-1)/S_{f_\theta}(t-1)$, and shows that the score process is a martingale
under the true parameter $\theta$, as usual, which can also be seen by readily verifying
\(
\mathbb{E}[\dot{\ell}_n(f_\theta) \mid \mathcal{F}_{n-1}] = \dot{\ell}_{n-1}(f_\theta).
\)
Dividing this martingale by the number $n$ of nodes in the network and rewriting as in \eqref{eqn:iota-nn} gives
\begin{equation}
	\begin{aligned}
		\dot{\iota}_n(f_{\theta}) =\frac1n \dot{\ell}_n(f_\theta) & = \sum_{k=1}^\infty \frac{\dot{f}_\theta}{f_\theta}(k) P_{>k}(n) - \frac{1}{n} \sum_{t=2}^{n} \frac{ S_{\dot{f}_\theta}(t-1)}{S_{f_\theta}(t-1)}                                               \\
		                                                          & =  \sum_{k=1}^\infty \frac{\dot{f}_\theta}{f_\theta}(k) P_{>k}(n) - \frac{1}{n} \sum_{t=2}^{n} \frac{\sum_{k=1}^\infty P_k(t-1) \dot{f}_\theta (k)}{ \sum_{k=1}^\infty P_k(t-1) f_\theta (k)}.
	\end{aligned}
	\label{eqn:iota-n}
\end{equation}
The maximum likelihood estimator $\hat \theta_n$ can be defined to be either the maximizer of the log-likelihood \eqref{eqn:log-likelihood} or a solution to the equation $\dot{\iota}_n(f_{\theta})=0$.

To understand the behavior of the \mle, we need to study the rescaled log-likelihood in \eqref{eqn:iota-nn} or
its derivative     \eqref{eqn:iota-n}. However, the quantities in these equations are anything but easy---they are functionals
of the entire evolution history of a complex Markov process, where the influence of the past persists in the likelihood.
Simple and straightforward approaches such as martingale methods (cf.\ \cite[Chapter 8]{hofstadcomplexnet})
are no longer meaningful.
Instead, we employ the theory of supercritical Malthusian branching processes, which will be introduced in the section~\ref{subsec:continuous-tree}.

Adapting such a powerful framework, we will be able to assert the utility of the \mle by showing its consistency and asymptotic normality, respectively.
In particular, in both Theorems~\ref{thm-consistency-param} (from the viewpoint of M-estimator) and \ref{thm-consistency-param2} (from the viewpoint of Z-estimator), we show under some mild assumptions on the parametric family that $\hat{\theta} \to \theta_0$ in an almost sure sense as the number of nodes $n\to \infty$.
Theorem~\ref{thm-param-clt} illuminates that $\sqrt{n} (\hat{\theta} - \theta_0) \rightsquigarrow N(0, \sigma_{f_{\theta_0}}^2)$  for some properly defined $\sigma_{f_{\theta_0}}^2$ such that constructing confidence sets and testing particular well-specified hypotheses are possible.

\section{The continuous random tree model}
\label{subsec:continuous-tree}

Before stating the main results formally, it is necessary to adopt a continuous-time framework, where nodes are added after exponentially distributed
waiting times. To set this up we equip every node $v$
with a pure birth process $\xi_v$ whose events correspond to new, future nodes being attached to this particular node and which
in calendar time starts at its own birth, i.e.\ when it is added to the tree. These birth processes are i.i.d.\ across nodes and
a typical birth process $\xi=(\xi(t): t\ge 0)$ has birth rate equal to $f\bigl(\xi(t)+1\bigr)$. Thus,
$\xi$ is a continuous-time Markov process with state space $\bbN_+$, initial value $\xi(0)=0$, and with
the only possible transitions stepwise increases $k-1\rightarrow k$, determined by
\begin{equation}
	\bbP\bigl(\xi(t+dt) = k \mid \xi(t) = k-1\bigr) = f(k)\, dt + o(dt).
	\label{eqn-birth-process}
\end{equation}
Every birth corresponds to a new node attached to the existing tree at the node whose birth process produced the event.
A node whose birth process has had $k-1$ births will have $k-1$ children and one parent in the tree and hence possess degree $k$.
It will produce a new child with rate $f(k)$, explaining the right side of the display. At a given calendar time $t$ every node in the
current tree $\Upsilon_t$ will have a corresponding, active birth process. The total rate of all active birth
processes will be $S(t)=\sum_{v\in\Upsilon_t} f(d_v(t))$, where $d_v(t)$ is the degree of
$v\in\Upsilon_t$, and a new node will be attached to node $v\in\Upsilon_t$
with probability $f(d_v(t))/S(t)$ after an exponential waiting time with mean $1/S(t)$.
The process $\Upsilon_t$ starts at time $0$ with a single node that is understood to have degree 1 and hence the first
birth will be after an exponential time with mean $f(1)$.
For a more formal setup in the language of general branching processes, see Section~\ref{sec:rooted-tree}.

Thus, we obtain a continuous-time branching process $\Upsilon_t$ that contains the discrete-time process as a skeleton.
We define $T_t$ as the total number of births in the continuous process up until time $t$,
and $\tau_n = \inf\{ t>0: T_t\ge n-1 \}$, for $n=1,2,\dots$, as the time of the $n$th birth (where $\tau_1 = 0$).
When evaluated at the stopping times $\t_1,\t_2,\ldots$, the continuous-time process gives a sequence of trees
$\Upsilon_{\tau_1}, \Upsilon_{\tau_2},\ldots$, that is  equivalent to the \pa model.

The advantage of the continuous-time setup is that the results on branching processes become more straightforward.
To every node we may attach besides a birth process a second continuous-time process, called a \textit{characteristic},
also starting at the birth of the node.  
Just as the birth processes, the characteristics are assumed identically distributed, and the birth process and characteristic attached to a single node may be dependent, but every time a new node is added, a pair of a birth process and a characteristic are created that evolve independently of the processes attached to the  nodes that appeared earlier in the history of the tree. 
For a given characteristic $\varphi$ we consider the process $(Z_t^\varphi: t\ge 0)$ given by
\begin{equation*}
	Z_t^{\varphi} = \sum_{v \in\Upsilon_t} \varphi_v(t- \sigma_v).
\end{equation*}
Here $\sigma_v$ is the calendar time at which node $v$ is added to the tree, so that
$t-\sigma_v$ is the lifetime of the node since its birth. The characteristic of node $v$, denoted by $\varphi_v$, has $\varphi_v(t)$ interpreted as its value at
age $t$ and hence $\varphi_v(t-\sigma_v)$ as its value at calendar time $t$. The variable $Z_t^\varphi$
gives the sum of the characteristics of all individuals in the tree at time $t$. In the supercritical case
the processes $Z_t^\varphi$ grow exponentially in time at a rate $\mathrm{e}^{\malt t}$, where $\malt$ is
the so-called Malthusian parameter, and $\mathrm{e}^{-\malt t} Z_t^\varphi$ tends to a (random) limit as $t\ra\infty$. We shall employ these
limit theorems with appropriate choices of characteristics to derive the asymptotics of the likelihood function.

A key element is the Laplace transform of the \textit{reproduction function} $\mu(t)=\bbE[\xi(t)]$,
the mean number of births of a single node at age $t$, which in our case can be expressed in the \pa function as
(see \cite{rudas2007random} or the proof in Section~\ref{sec:proofs})
\begin{equation}
	\label{laplace-transform}
	\rho_f(\lambda):= \int_0^\infty \mathrm{e}^{-\lambda t}\,\mu(dt)
	=\sum_{l=1}^\infty \prod_{k=1}^l \frac{f(k)}{\lambda+f(k)}.
\end{equation}
The function $\rho_f$  is convex and decreasing on its domain (the set where it is finite), which is an interval $(\underline\lambda, \infty)$
or $[\underline\lambda, \infty)$ in the positive half line, and tends to zero as $\lambda\ra\infty$.
The Malthusian parameter $\malt$ is the solution of the equation $\rho_f(\malt)=1$.

In the case of a strictly sublinear \pa function $f$, we have $\underline\l=0$ and $\rho_f(\lambda)\uparrow \infty$
as $\lambda\downarrow0$, while for the \pa function $f(k)=k+\alpha$ we have $\underline\lambda=1$ and
the exact form $\rho_f(\lambda)= (1+\alpha)/(\lambda-1)$ is known (see \cite{gao2017consistent,rudas2007random} or the proof of Proposition~\ref{lemma-conv-phi1-phi2}).
In both cases the range of $\rho_f$ contains the point $1$ as an interior point and the Malthusian parameter exists.
If the sublinearity assumption is violated in the sense that neither of the two sublinear conditions holds, the existence of the Malthusian parameter is not guaranteed, and the associated branching process could explode or behave irregularly, whence the branching process framework and the ensuing may results fail. 

Furthermore, under the sublinear assumptions of the \pa function, we have the following limit theorem.

\begin{proposition}
	\label{lemma-conv-phi1-phi2}
	Suppose that the  range of $\rho_f$ contains an open neighborhood of 1 and
	Let $\varphi_1$ and $\varphi_2$ be monotone increasing characteristics such that, for some constants $C>0$ and $\gamma\ge 0$
	and every $t>0$,
	\begin{equation}
		\label{eqn:max-phi1-phi2}
		\varphi_i(t) \le C\, \xi(t)^2,\qquad i=1,2, \quad \text{almost surely}.
	\end{equation}
	If $f: \bbN_+\to \bbR_+$ is monotone with $f(k)\le C k^\b$ for some constants $C$ and $\b<1$, then, as $t \rightarrow \infty$,
	\begin{equation}
		\frac{Z_t^{\varphi_1}}{Z_t^{\varphi_2}} \asconv 
		\frac{ \int_0^\infty \mathrm{e}^{- \malt t} \bbE [ \varphi_1(t) ]\, dt }{\int_0^\infty \mathrm{e}^{- \malt t} \bbE [ \varphi_2(t) ]\, dt} .
		\label{eqn-conv-phi1-phi2}
	\end{equation}
	The same is true if $f(k)=k+\alpha$, for some $\a>-1$, provided that for some $r<2+\alpha$ with $r\le 2$,
	\begin{equation}
		\label{eqn:max-phi1-phi2-affine}
		\varphi_i(t) \le C\,  \xi(t)^r,\qquad i=1,2, \quad \text{almost surely}.
	\end{equation}
\end{proposition}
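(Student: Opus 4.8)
The plan is to read the proposition off the strong law for supercritical general (Crump–Mode–Jagers) branching processes of \cite{nerman1981convergence}, in the form in which it was set up for \pa trees by \cite{rudas2007random}, applied \emph{separately} to the two characteristics. The structural fact that does all the work is that, for a characteristic $\varphi$ meeting suitable integrability, Nerman's theorem delivers an almost sure limit of the shape
\[
	\mathrm{e}^{-\malt t} Z_t^{\varphi}\asconv \frac{\int_0^\infty \mathrm{e}^{-\malt t}\bbE[\varphi(t)]\,dt}{-\rho_f'(\malt)}\,W,
\]
in which $W$ and the normaliser $-\rho_f'(\malt)=\int_0^\infty t\,\mathrm{e}^{-\malt t}\mu(dt)\in(0,\infty)$ are intrinsic to the reproduction process and do \emph{not} depend on $\varphi$ (the precise value of the $\varphi$-free factor is immaterial). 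Granting this for both $\varphi_1$ and $\varphi_2$, and using that $W$ is almost surely finite and strictly positive because the embedded pure birth process never goes extinct (survival has probability one), I would write $Z_t^{\varphi_1}/Z_t^{\varphi_2}=(\mathrm{e}^{-\malt t}Z_t^{\varphi_1})/(\mathrm{e}^{-\malt t}Z_t^{\varphi_2})$ and let the common factors $W$ and $-\rho_f'(\malt)$ cancel, leaving exactly the right-hand side of \eqref{eqn-conv-phi1-phi2}. The hypothesis that the range of $\rho_f$ contains an open neighbourhood of $1$ is precisely what furnishes the Malthusian parameter $\malt$ and the finiteness of $\rho_f$ on an interval strictly to the left of $\malt$, which is the standing assumption behind the strong law.

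The genuine content is therefore the verification of the integrability hypotheses of Nerman's theorem for $\varphi_1,\varphi_2$. First I would record that $\mathrm{e}^{-\malt t}\bbE[\varphi_i(t)]$ is directly Riemann integrable with finite integral, and, for the almost sure (rather than merely $L^1$) conclusion, that the stronger dominating condition of \cite{nerman1981convergence} controlling $\sup_{t}\mathrm{e}^{-\malt t}\varphi_i(t)$ by a monotone envelope is in force. Since $\varphi_1,\varphi_2$ are monotone increasing and dominated by $C\,\xi(t)^2$ (respectively $C\,\xi(t)^r$), both requirements collapse to the single moment estimate
\[
	\int_0^\infty \mathrm{e}^{-\malt t}\,\bbE\bigl[\xi(t)^p\bigr]\,dt<\infty,
\]
with $p=2$ in the sublinear regime and $p=r$ in the affine regime.

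This moment bound is where the two cases diverge and where I expect the main obstacle to sit. In the sublinear case $f(k)\le Ck^\beta$ with $\beta<1$ the birth rate $f(\xi(t)+1)$ is sublinear in $\xi$, so a Jensen/Grönwall argument on the forward equations shows $\xi(t)$ grows only polynomially and $\bbE[\xi(t)^2]$ is bounded by a polynomial in $t$; because here $\rho_f(\lambda)\uparrow\infty$ as $\lambda\downarrow 0$ forces $\malt>0$, the weight $\mathrm{e}^{-\malt t}$ dominates any fixed power and the integral converges, in particular for $p=2$. In the affine case $f(k)=k+\alpha$ the process $\xi$ is a linear (Yule-type) birth process, so $\bbE[\xi(t)^p]\asymp \mathrm{e}^{pt}$, a fact I would make precise through a generating-function computation on the Kolmogorov equations. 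Since $\rho_f(\lambda)=(1+\alpha)/(\lambda-1)$ gives the explicit value $\malt=2+\alpha$, the integral $\int_0^\infty \mathrm{e}^{-\malt t}\mathrm{e}^{pt}\,dt$ converges exactly when $\malt>p$, i.e.\ $p<2+\alpha$; this is the origin of the hypothesis $r<2+\alpha$, while the ceiling $r\le 2$ matches the quadratic envelope used in the sublinear case and secures the dominating condition. Feeding the resulting integrability into Nerman's theorem and passing to the ratio as above then yields the claimed almost sure convergence.
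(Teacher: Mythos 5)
Your proposal follows the same high-level strategy as the paper---reduce everything to the hypotheses of Nerman's strong law for supercritical general branching processes---but the technical core is carried out along a genuinely different route. The paper verifies the almost-sure ratio theorem directly (conditions \eqref{eqn:V-beta} and \eqref{eqn:beta-xi}) by means of the pathwise inequality $\mathrm{e}^{-\lambda t}\xi(t)\le\int_0^t \mathrm{e}^{-\lambda u}\,\xi(du)$, which turns the supremum condition into a second-moment computation for $\int_0^\infty \mathrm{e}^{-\lambda u}\,\xi(du)$ carried out exactly via the representation of the birth times as sums of independent exponentials; the exponent constraints $\beta<1$ and $r<2+\alpha$, $r\le 2$ then fall out of the product formula for $\rho_f$. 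You instead apply the strong law to each characteristic separately, divide, and propose to bound the raw moments $\bbE[\xi(t)^p]$ through the Kolmogorov forward equations (Jensen/Gr\"onwall in the sublinear case, explicit Yule-type growth $\mathrm{e}^{pt}$ in the affine case). That is a perfectly viable and arguably more elementary computation, and it recovers the same thresholds. Two points need tightening, though neither is fatal. First, the displayed condition $\int_0^\infty \mathrm{e}^{-\malt t}\bbE[\xi(t)^p]\,dt<\infty$ is \emph{not} by itself Nerman's dominating condition: what is required is $\bbE\bigl[\sup_t \mathrm{e}^{-\lambda t}\varphi_i(t)\bigr]<\infty$ for some $\lambda$ \emph{strictly below} $\malt$ (equivalently, the version at $\malt$ with a monotone integrable envelope). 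The passage from an integral of means to a mean of a supremum does work here, but only via the discretization $\sup_t \mathrm{e}^{-\lambda t}\varphi_i(t)\le \sum_k \mathrm{e}^{-\lambda k}\varphi_i(k+1)$ using the monotonicity of $\varphi_i$, run at such a $\lambda<\malt$; your own estimates (polynomial growth of $\bbE[\xi(t)^2]$ when $\malt>0$, and $\bbE[\xi(t)^r]\asymp \mathrm{e}^{rt}$ with $r<2+\alpha=\malt$) leave exactly the slack needed to choose $\lambda\in(r,\malt)$, so the fix is immediate, but it should be stated. Second, dividing the two individual limits requires $W>0$ almost surely, which rests on the $x\log x$ condition \eqref{eqn-xi-logp} for $\int_0^\infty \mathrm{e}^{-\malt u}\,\xi(du)$, not merely on non-extinction; the paper obtains this as a by-product of its finite second moment of $\int_0^\infty \mathrm{e}^{-\lambda u}\,\xi(du)$, whereas in your scheme it would need a separate (easy, but non-trivial for $-1<\alpha\le 0$) moment check. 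With these two patches your argument is complete and delivers the same conclusion.
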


For a given \pa function $f$ with Malthusian parameter $\malt$, define
\begin{equation}
	p_k = \frac{\malt}{\malt + f(k)}\prod_{j=1}^{k-1} \frac{f(j)}{\malt + f(j)}, \qquad  k\in \bbN_+,
	\label{eqn-pk-limit}
\end{equation}
where the empty product is defined to be $1$, so that $p_1 = \malt/\bigl(\malt + f(1)\bigr)$.
The definition of $\malt$ may be used to show that $(p_k)_{k=1}^\infty$  is a probability distribution on $\bbN_+$.
In fact, it is the limit of the empirical degree distribution $(P_k(n))_{k=1}^\infty$ of the \pa network, as shown
by \cite{rudas2007random}. More generally, we have the following limiting result.

\begin{corollary}
	\label{lemma-hk-pk-conv}
	If  $f: \bbN_+\to \bbR_+$ is monotone increasing and satisfies $f(k)\le C k^\beta$, for some $\beta<1$, and
	$h: \bbN_+ \rightarrow \bbR_+$ satisfies  $h(k)\le Ck^2$, for a constant $C$, and every $k$, then the
	empirical degrees $P_k(n)$ in the model with \pa function $f$ satisfy, as $n \rightarrow \infty$,
	\begin{equation}
		\sumk h(k) P_k(n) \asconv \sumk h(k) p_k.
		\label{eqn-hk-pk-conv}
	\end{equation}
	The same is true for the \pa function given by  $f(k)=k+\a$, for some $\a>-1$, and every
	$h: \bbN_+ \rightarrow \bbR_+$ satisfying $h(k)\le C k^r$, for some $r<2+\alpha$ with $r\le 2$.
\end{corollary}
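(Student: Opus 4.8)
The plan is to realise the weighted degree sum as a ratio of two branching processes counted with suitable characteristics and then invoke Proposition~\ref{lemma-conv-phi1-phi2}. To each node $v$ I would attach the characteristic $\varphi^h_v(s)=h\bigl(\xi_v(s)+1\bigr)$, the value of $h$ at the current degree of $v$ (a node whose birth process has recorded $k-1$ births has degree $k$). Then $Z_t^{\varphi^h}=\sum_{v\in\Upsilon_t}h(d_v(t))=\sumk h(k)N_k(t)$, while the constant characteristic $\varphi\equiv1$ gives $Z_t^{\mathbf 1}=N(t)$, the number of nodes. Since the \pa tree is exactly the continuous tree observed at the birth times $\tau_n$ and $N(\tau_n)=n$, this yields
\[
\sumk h(k)P_k(n)=\frac{Z_{\tau_n}^{\varphi^h}}{Z_{\tau_n}^{\mathbf 1}}.
\]
Because $\sumk 1/f(k)=\infty$ in both the sublinear and the affine case the branching process is non-explosive, so $\tau_n\uparrow\infty$ almost surely, and it suffices to study the ratio $Z_t^{\varphi^h}/Z_t^{\mathbf 1}$ as $t\to\infty$ in continuous time.

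Next I would identify the candidate limit. For an admissible pair, Proposition~\ref{lemma-conv-phi1-phi2} gives the limit $\malt\int_0^\infty \mathrm{e}^{-\malt t}\,\bbE[h(\xi(t)+1)]\,dt$, using that the denominator equals $\int_0^\infty \mathrm{e}^{-\malt t}\,dt=1/\malt$. The inner integral I would compute from the structure of the pure birth process: the successive holding times are independent exponentials with rates $f(1),f(2),\dots$, so writing $T_m$ for the hitting time of state $m$ and using $\bbP(\xi(t)=m)=\bbP(T_m\le t)-\bbP(T_{m+1}\le t)$ together with $\int_0^\infty \mathrm{e}^{-\malt t}\bbP(T_m\le t)\,dt=\malt^{-1}\prod_{j=1}^{m} f(j)/(\malt+f(j))$, one obtains after telescoping
\[
\malt\int_0^\infty \mathrm{e}^{-\malt t}\,\bbP\bigl(\xi(t)=k-1\bigr)\,dt=\frac{\malt}{\malt+f(k)}\prod_{j=1}^{k-1}\frac{f(j)}{\malt+f(j)}=p_k .
\]
Summing against $h$ (Tonelli, all terms nonnegative) then identifies the limit as $\sumk h(k)p_k$.

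The main obstacle is that Proposition~\ref{lemma-conv-phi1-phi2} requires \emph{monotone} characteristics, whereas $h$ need not be monotone, so $\varphi^h$ need not be either. I would circumvent this in two steps. First, for each fixed $k$ the tail characteristic $\bbind_{\{\xi(s)\ge k-1\}}$ is bounded and monotone increasing; applying the proposition to it and to $\varphi\equiv1$ gives $P_{>k-1}(n)\asconv p_{>k-1}$, and differencing yields $P_k(n)\asconv p_k$ for every fixed $k$, hence $\sum_{k\le M}h(k)P_k(n)\asconv\sum_{k\le M}h(k)p_k$ for each finite $M$. Second, the dominating function $g(k)=Ck^2$ (respectively $g(k)=Ck^r$ in the affine case) is monotone increasing and meets the growth bound of the proposition, so $\sumk g(k)P_k(n)\asconv\sumk g(k)p_k$; this limit is finite, as the sublinear tails of $p_k$ are stretched-exponential and, in the affine case, $r<2+\alpha$ guarantees $\sumk k^r p_k<\infty$.

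Finally I would combine the two by truncation. Given $\e>0$ I pick $M$ with $\sum_{k>M}g(k)p_k<\e$; then
\[
\sum_{k>M}h(k)P_k(n)\le\sum_{k>M}g(k)P_k(n)=\sumk g(k)P_k(n)-\sum_{k\le M}g(k)P_k(n)\asconv\sum_{k>M}g(k)p_k<\e,
\]
so the tails are uniformly negligible, and together with the convergence of the truncated sum this gives $\sumk h(k)P_k(n)\asconv\sumk h(k)p_k$. Intersecting the countably many almost sure events (over $k$, over $M$, and over a sequence $\e\downarrow0$) secures a single almost sure event on which the convergence holds. The genuinely delicate point is exactly this interchange of the limit with the infinite summation, forced by the monotonicity restriction in Proposition~\ref{lemma-conv-phi1-phi2}; everything else is the bookkeeping of choosing the right characteristics and the Laplace-transform computation.
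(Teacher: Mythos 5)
Your proposal is correct and follows the same route as the paper: represent $\sumk h(k)P_k(n)$ as the ratio $Z_{\tau_n}^{\varphi_1}/Z_{\tau_n}^{\varphi_2}$ for the characteristics $\varphi_1(t)=h(\xi(t)+1)$ and $\varphi_2\equiv 1$, use non-explosion to pass from continuous time $t\to\infty$ to the stopping times $\tau_n$, invoke Proposition~\ref{lemma-conv-phi1-phi2}, and identify the limit through the Laplace transform identity
\begin{equation*}
\malt\int_0^\infty \mathrm{e}^{-\malt t}\,\bbP\bigl(\xi(t)=k-1\bigr)\,dt=p_k ,
\end{equation*}
which you compute directly where the paper cites Equation (17) of \cite{gao2017consistent}.

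The one genuine difference is your treatment of the monotonicity hypothesis. Proposition~\ref{lemma-conv-phi1-phi2} is stated for \emph{monotone increasing} characteristics, and $t\mapsto h(\xi(t)+1)$ is monotone only when $h$ itself is, which the corollary does not assume (and which fails for the later applications with $h=\bbind_{\{k\}}$ or $h$ equal to entries of $\dot f_\q\dot f_\q^T/f_\q$). The paper's proof simply asserts that the conditions of the proposition are met; your two-step repair --- fixed-$k$ convergence $P_k(n)\asconv p_k$ via the monotone tail indicators $\bbind_{\{\xi(s)\ge k-1\}}$, combined with the monotone dominating characteristic $g(k)=Ck^2$ (resp.\ $Ck^r$) and a truncation in $k$ using $\sum_{k>M}g(k)p_k<\e$ --- closes this gap cleanly and is the more careful argument. (Both you and the paper inherit the cosmetic issue that the bound $\varphi_i(t)\le C\xi(t)^2$ is violated at $\xi(t)=0$ for the constant characteristic; the intended reading is $C(\xi(t)+1)^2$, which changes nothing.) In short: same skeleton, but your version supplies a step the paper needs and does not spell out.
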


Choosing $h$ equal to the indicator of the set $\{k\}$ for a given $k$, we recover the convergence
$P_k(n)\rightarrow p_k$ of the empirical degrees to the limit $p_k$, first obtained in
\cite{rudas2007random}.

It is worth noting that the tail of the distribution $(p_k)_{k=1}^\infty$ (of which the dependence on the \pa function $f$ is
suppressed from the notation), as $k \rightarrow \infty$ is  heaviest among sublinear $f$ when $f$ is affine with
$f : k \mapsto k + \alpha$, and it corresponds to the limiting (asymptotic) power law with exponent  $3+\alpha$.

The following lemma records two useful identities, which readily follow from the definition \eqref{eqn-pk-limit}
of $p_k$, and the definition of $\malt$ in terms of the Laplace transform \eqref{laplace-transform}.

\begin{lemma}
	\label{lemma-limit-degree-dist-equality}
	Suppose  $(p_k)_{k=1}^\infty$ is the limiting degree distribution specified in \eqref{eqn-pk-limit} for the \pa function $f$ with
	Malthusian parameter $\malt$.  Then $\malt=\sum_{j=1}^\infty f(j)p_j$ and, for all $k\geq 1$,
	\begin{align}
		\label{eqn-lemma-limit-degree-dist-equality}
		p_{>k} & =\frac{f(k)p_k}{\sum_{j=1}^\infty f(j)p_j}.
	\end{align}
\end{lemma}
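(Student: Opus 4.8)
The plan is to reduce both identities to a single telescoping relation among the partial products appearing in the Laplace transform \eqref{laplace-transform}. Introduce the abbreviation $q_k=\prod_{j=1}^{k}\frac{f(j)}{\malt+f(j)}$ for $k\ge 1$, together with the empty product $q_0=1$. With this notation the $l$th summand of $\rho_f(\malt)$ is precisely $q_l$, so the defining equation $\rho_f(\malt)=1$ of the Malthusian parameter reads $\sum_{l=1}^\infty q_l=1$. Since $1$ lies in the range of $\rho_f$ under the sublinearity assumptions recalled before \eqref{laplace-transform}, this series converges, which in particular forces $q_l\to 0$ as $l\to\infty$; I will use this to evaluate the telescoping sums below.

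First I would record the recursion $q_k=q_{k-1}\,\frac{f(k)}{\malt+f(k)}$ and rewrite the definition \eqref{eqn-pk-limit} as $p_k=q_{k-1}\,\frac{\malt}{\malt+f(k)}=q_{k-1}\bigl(1-\tfrac{f(k)}{\malt+f(k)}\bigr)=q_{k-1}-q_k$. This telescoping representation of $p_k$ is the heart of the argument. Summing it over $l>k$ and using $q_l\to0$ gives the tail sum $p_{>k}=\sum_{l>k}(q_{l-1}-q_l)=q_k$, and the same computation at $k=0$ re-derives $\sum_{k}p_k=q_0=1$, confirming that $(p_k)$ is a probability distribution.

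For the identity $\malt=\sum_{j}f(j)p_j$, I would multiply $p_k=q_{k-1}\,\frac{\malt}{\malt+f(k)}$ by $f(k)$ and apply the recursion to obtain $f(k)p_k=\malt\,q_{k-1}\,\frac{f(k)}{\malt+f(k)}=\malt\,q_k$. Summing over $k\ge1$ and invoking $\sum_{k}q_k=\rho_f(\malt)=1$ then yields $\sum_{j}f(j)p_j=\malt\sum_{k}q_k=\malt$. For the second identity \eqref{eqn-lemma-limit-degree-dist-equality}, I would combine the two displays already produced: $f(k)p_k=\malt\,q_k=\malt\,p_{>k}$, using $q_k=p_{>k}$ from the previous paragraph. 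Hence $p_{>k}=f(k)p_k/\malt$, and substituting $\malt=\sum_{j}f(j)p_j$ gives the claim.

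There is no deep obstacle here: the entire lemma is a telescoping identity tied to the Malthusian normalisation $\rho_f(\malt)=1$. The only point genuinely requiring care is the analytic justification of the limit operations on the infinite series, namely that $\sum_k q_k$ converges to $1$ and that $q_l\to 0$; both are supplied exactly by the existence of $\malt$ with $\rho_f(\malt)=1$, which in turn rests on the sublinearity hypotheses on $f$ stated in the model assumptions. Once that is in place, the rest is bookkeeping.
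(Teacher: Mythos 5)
Your proof is correct and is exactly the telescoping argument the paper has in mind: the paper omits a written proof, stating only that the identities ``readily follow'' from the definition \eqref{eqn-pk-limit} of $p_k$ and the Malthusian equation $\rho_f(\malt)=1$, and your computation ($p_k=q_{k-1}-q_k$, hence $p_{>k}=q_k$ and $f(k)p_k=\malt q_k$) is the standard way to make that precise. The one point needing care --- that $q_l\to0$ and $\sum_l q_l=1$, justified by the existence of the Malthusian parameter --- is handled properly.
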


\section{Consistency}
\label{sec:param-consistency}
The following theorem shows that the \mle in the model introduced in Section~\ref{SectionModel} is consistent.
We assume that the true parameter $\q_0\in\Theta$ is \textit{identifiable} in the model $\{ f_\theta : \theta \in \Theta \}$
in the sense that $f_\q(k)= c f_{\q_0}(k)$ for every $k\in\bbN_+$ and some constant $c$ if and only if $\q=\q_0$.

\begin{theorem}
	\label{thm-consistency-param}
	In the model $\{ f_\theta : \theta \in \Theta \}$ stated in Section~\ref{SectionModel} with compact parameter
	space $\Theta\subset \bbR^d $ and \pa functions satisfying $f_\q(k)\le C k^\b$ for some  constants  $C$ and $\beta<1$ or
	$f_\q(k)=k+\a$ for some constant $\a>-1$, for every $k$ and  every $\q\in\Theta$,
	the \mle $\hat\theta_n$ satisfies $\hat{\theta}_n \rightarrow \theta_0$  almost surely  under $\theta_0$.
\end{theorem}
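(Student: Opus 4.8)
The plan is to treat $\hat\theta_n$ as an M-estimator and run the standard \emph{uniform-convergence plus well-separated-maximum} argument, after recentering the (diverging) log-likelihood by a $\theta$-independent quantity. Writing $S_{f_\theta}(t-1)=(t-1)\sum_k f_\theta(k)P_k(t-1)$ and subtracting the $\theta$-free term $n^{-1}\sum_{t=2}^n\log(t-1)$ from $\iota_n(f_\theta)$ in \eqref{eqn:iota-nn}, I would study the contrast
\begin{equation*}
  M_n(\theta)=\sum_{k=1}^\infty \log f_\theta(k)\,P_{>k}(n)-\frac1n\sum_{t=2}^n\log\Bigl(\sum_{k=1}^\infty f_\theta(k)P_k(t-1)\Bigr),
\end{equation*}
which has the same maximizer $\hat\theta_n$ as $\iota_n(f_\theta)$. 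The candidate limit is $M(\theta)=\sum_k\log f_\theta(k)\,p^{(0)}_{>k}-\log\bigl(\sum_k f_\theta(k)p_k^{(0)}\bigr)$, where $p^{(0)}$ is the limiting degree distribution of Corollary~\ref{lemma-hk-pk-conv} under the data-generating $f_{\theta_0}$.

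Identification comes first, because it is clean and dictates the rest. Using Lemma~\ref{lemma-limit-degree-dist-equality} under $\theta_0$, namely $p^{(0)}_{>k}=f_{\theta_0}(k)p_k^{(0)}/\malt$ with $\malt=\sum_j f_{\theta_0}(j)p_j^{(0)}$, I get for $r_\theta(k)=f_\theta(k)/f_{\theta_0}(k)$ that
\begin{equation*}
  M(\theta)-M(\theta_0)=\sum_k q_k\log r_\theta(k)-\log\Bigl(\sum_k q_k\,r_\theta(k)\Bigr),\qquad q_k:=p^{(0)}_{>k},
\end{equation*}
and $(q_k)$ is a genuine probability distribution since $\sum_k p^{(0)}_{>k}=\malt^{-1}\sum_k f_{\theta_0}(k)p_k^{(0)}=1$. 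Jensen's inequality for the concave logarithm then gives $M(\theta)\le M(\theta_0)$, with equality iff $r_\theta$ is $q$-a.e.\ constant; since $q_k>0$ for every $k$ (by \eqref{eqn-pk-limit}), this means $f_\theta=c\,f_{\theta_0}$ for a constant $c$, which by the identifiability assumption forces $\theta=\theta_0$. Continuity of $M$ (from term-by-term continuity and the uniform tail bound below) together with compactness of $\Theta$ upgrades this to a well-separated maximum: $\sup_{\|\theta-\theta_0\|\ge\epsilon}M(\theta)<M(\theta_0)$ for every $\epsilon>0$.

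The bulk of the work, and the main obstacle, is the almost sure \emph{uniform} convergence $\sup_{\theta\in\Theta}|M_n(\theta)-M(\theta)|\to0$, which must accommodate both the infinite degree sums and the time averaging. For the first term I would rewrite $\sum_k\log f_\theta(k)P_{>k}(n)=\sum_l g_\theta(l)P_l(n)$ with $g_\theta(l)=\sum_{k<l}\log f_\theta(k)$ and split at a level $K$: the finite part $\sum_{l\le K}g_\theta(l)(P_l(n)-p_l^{(0)})$ tends to $0$ uniformly in $\theta$ because $\max_{l\le K,\theta}|g_\theta(l)|<\infty$ and each $P_l(n)\to p_l^{(0)}$, while the tail is controlled uniformly in both $\theta$ and $n$ using the model bound $f_\theta(k)\le Ck^\beta$ (resp.\ $f_\theta(k)=k+\alpha$) to get $|g_\theta(l)|\le C'\,l\log l$, and comparing two admissible moment exponents $1<r<r'$ (both below the cap $2$, resp.\ $2+\alpha$, permitted by Corollary~\ref{lemma-hk-pk-conv}) via $\sum_{l>K}l\log l\,P_l(n)\le C''K^{-(r'-r)}\sum_l l^{r'}P_l(n)$, the latter sum being almost surely bounded. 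For the time-averaged term I would first show $\sup_\theta|\sum_k f_\theta(k)P_k(t-1)-\sum_k f_\theta(k)p_k^{(0)}|\to0$ by the same finite-plus-tail splitting, note that the limit and (eventually) the prelimit are bounded away from $0$ uniformly in $\theta$ (since $f_\theta(k)\ge f_\theta(1)\ge\inf_\theta f_\theta(1)>0$), so that $\sup_\theta|\log(\cdots)-\log(\cdots)|\to0$ in $t$, and finally invoke \cesaro averaging to pass from this per-$t$ convergence to the average over $t=2,\dots,n$.

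With uniform convergence and the well-separated maximum in hand, the conclusion is routine: since $\hat\theta_n$ maximizes $M_n$, the standard M-estimator argument yields $M(\hat\theta_n)\to M(\theta_0)=\max_\theta M$, and the separation forces $\hat\theta_n\to\theta_0$ almost surely; existence and measurability of a maximizer follow from continuity of $\theta\mapsto\iota_n(f_\theta)$ on the compact set $\Theta$. I expect the delicate points to lie entirely in the uniform tail control and the \cesaro step, not in the identification, which the identity of Lemma~\ref{lemma-limit-degree-dist-equality} renders transparent.
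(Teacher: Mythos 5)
Your proposal is correct and follows essentially the same route as the paper's proof: recentering by the $\theta$-free term $n^{-1}\sum_{t=2}^n\log(t-1)$, almost sure uniform convergence of the contrast to $\iota(f_\theta)$ via Corollary~\ref{lemma-hk-pk-conv} and the \cesaro lemma, identification of $\theta_0$ as the unique (hence, by compactness and continuity, well-separated) maximizer via Jensen's inequality applied to the reweighted distribution $q^\theta_k\propto p_{>k}^{\sss(0)}f_\theta(k)/f_{\theta_0}(k)$ --- which is exactly the paper's Kullback--Leibler argument --- and Theorem~5.7 of \cite{vdvaart2000asymptotic}. The only (immaterial) difference is in the uniform tail control of the degree sums: you truncate at a level $K$ and bound the tail by a moment comparison with exponents $1<r<r'$, whereas the paper uses the decomposition $|u|=u+2u_-$ together with dominated convergence; both are valid.
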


\subsection{Identifiability From Score Equation}
\label{subsec:iden-score}
For computational ease the \mle may be characterized as a solution to the likelihood
equations $\dot{\iota}_n(f_\theta)=0$, for $\dot{\iota}_n$ given in \eqref{eqn:iota-n}. The asymptotic version
of this function is
\begin{equation}
	\dot{\iota}(f_\theta) = \sum_{k=1}^\infty \frac{\dot{f}_\theta }{ f_\theta}(k) p_{>k}^{\sss (0)}
	- \frac{\sum_{k=1}^\infty p_k^{\sss (0)} \dot{f}_\theta (k)}{ \sum_{k=1}^\infty p_k^{\sss (0)} f_\theta (k)}.
	\label{eqn:iota-dot}
\end{equation}
It follows from \eqref{eqn-lemma-limit-degree-dist-equality} that the true parameter
$\q_0$ solves the equation $\dot{\iota}(f_\theta)=0$. The following proposition shows that the processes
$\dot{\iota}_n$ tend uniformly to $\dot\iota$. The proof is similar to the proof of
Theorem~\ref{thm-consistency-param} and will be omitted.

\begin{proposition}
	\label{prop-uniform-conv-likelihood}
	In the model $\{ f_\theta : \theta \in \Theta \}$ stated in Section~\ref{SectionModel} with compact parameter
	space $\Theta\subset \bbR^d $ and \pa functions satisfying $f_\q(k)\le C k^\b$ for some  constants  $C$ and $\beta<1$ or
	$f_\q(k)=k+\a$ for some constant $\a>-1$, for every $k$ and  every $\q\in\Theta$, assume that, for some constants $C$ and $\g$,
	\begin{align*}
		\bigl\|\dot f_\theta(k)\bigr\|          & \le C k \log^\gamma k, \\
		\Bigl\|\frac{\dot f_\q}{f_\q}(k)\Bigr\| & \le C \log^\gamma k.
	\end{align*}
	Then $\sup_{\theta \in \Theta} |\dot{\iota}_n(f_\theta) - \dot{\iota}(f_\theta) | \rightarrow 0$, almost surely,
	as $n \rightarrow \infty$.
	\label{eqn-uniform-conv}
\end{proposition}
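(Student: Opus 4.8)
The plan is to split the difference $\dot\iota_n(f_\theta)-\dot\iota(f_\theta)$ into its two natural pieces---the linear ``score'' part $\sum_k(\dot f_\theta/f_\theta)(k)\,[P_{>k}(n)-p_{>k}^{\sss(0)}]$ and the \cesaro average part---and to show each tends to zero uniformly over the compact set $\Theta$. The engine for both is an upgrade of Corollary~\ref{lemma-hk-pk-conv} to convergence in a weighted $\ell^1$ sense. Concretely, I would fix the \emph{$\theta$-free} envelope $w(k)=k(1+\log k)^\gamma$, which satisfies $w(k)\le Ck^2$ in the sublinear case and $w(k)\le Ck^r$ for some $r\in(1,2]$ with $r<2+\alpha$ in the affine case, so Corollary~\ref{lemma-hk-pk-conv} applies with $h=w$ and gives $\sum_k w(k)P_k(n)\to\sum_k w(k)p_k^{\sss(0)}<\infty$ almost surely. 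Since in addition $P_k(n)\to p_k^{\sss(0)}$ for each fixed $k$ and all summands are non-negative, Scheff\'e's lemma for series yields, on one event of probability one,
\begin{equation*}
	\Delta_n:=\sum_{k=1}^\infty w(k)\,\bigl|P_k(n)-p_k^{\sss(0)}\bigr|\longrightarrow 0 .
\end{equation*}
The assumed bounds give $\|\dot f_\theta(k)\|\le Cw(k)$, $f_\theta(k)\le Cw(k)$ and $\|(\dot f_\theta/f_\theta)(k)\|\le C\log^\gamma k$ uniformly in $\theta$; because $w$ does not depend on $\theta$, every estimate driven by $\Delta_n$ is automatically uniform over $\Theta$.

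For the score part I would use Abel summation. Writing $P_{>k}(n)=\sum_{l>k}P_l(n)$ (a finite sum for each $n$) and interchanging summation gives $\sum_k(\dot f_\theta/f_\theta)(k)P_{>k}(n)=\sum_l G_\theta(l)P_l(n)$ with $G_\theta(l)=\sum_{k<l}(\dot f_\theta/f_\theta)(k)$, and the bound on $\dot f_\theta/f_\theta$ yields $\|G_\theta(l)\|\le Cw(l)$. The same rearrangement holds in the limit, so the first-term difference is bounded by $\sum_l\|G_\theta(l)\|\,|P_l(n)-p_l^{\sss(0)}|\le C\Delta_n$, which tends to zero uniformly in $\theta$.

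The \cesaro part is the main obstacle. I would write the summand as $R_\theta(s)=N_\theta(s)/D_\theta(s)$ with $N_\theta(s)=\sum_k P_k(s)\dot f_\theta(k)$ and $D_\theta(s)=\sum_k P_k(s)f_\theta(k)$, and its limit $R_\theta^\infty=N_\theta^\infty/D_\theta^\infty$ with $p_k^{\sss(0)}$ replacing $P_k(s)$. From $\Delta_s\to0$ one gets $\|N_\theta(s)-N_\theta^\infty\|\le C\Delta_s$ and $|D_\theta(s)-D_\theta^\infty|\le C\Delta_s$, uniformly in $\theta$. The denominator is bounded away from zero uniformly: $D_\theta^\infty\ge f_\theta(1)p_1^{\sss(0)}\ge c_0>0$, since $\theta\mapsto f_\theta(1)$ is continuous and strictly positive on the compact set $\Theta$; hence $D_\theta(s)\ge c_0/2$ for all large $s$. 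Combined with the uniform bound $\|N_\theta^\infty\|\le C\sum_k w(k)p_k^{\sss(0)}$, the elementary ratio inequality $\|a/b-a'/b'\|\le \|a-a'\|/b+\|a'\|\,|b-b'|/(bb')$ gives $\sup_{\theta\in\Theta}\|R_\theta(s)-R_\theta^\infty\|\le C'\Delta_s\to0$.

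It remains to pass from this to the \cesaro average $\tfrac1n\sum_{t=2}^n R_\theta(t-1)$. Given $\varepsilon>0$, choose a (realization-dependent) $s_0$ with $\sup_{\theta}\|R_\theta(s)-R_\theta^\infty\|<\varepsilon$ for $s\ge s_0$; splitting the average at $s_0$, the tail contributes at most $\varepsilon$ uniformly in $\theta$, while the head $\tfrac1n\sum_{t=2}^{s_0}\bigl(R_\theta(t-1)-R_\theta^\infty\bigr)$ is a fixed finite sum divided by $n$. In that head each $R_\theta(t-1)$ is a ratio of finite sums of continuous functions whose denominator is at least $f_\theta(1)P_1(t-1)>0$ (note $N_1(t)\ge1$ for all $t$), hence continuous and bounded in $\theta$ on $\Theta$, so the head is $O(1/n)\to0$ uniformly. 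This gives $\sup_{\theta\in\Theta}|\,\tfrac1n\sum_{t=2}^n R_\theta(t-1)-R_\theta^\infty|\to0$, and together with the score part completes the proof. The genuinely delicate points are exactly the uniform lower bound on the denominator (needing compactness of $\Theta$ and positivity of $f_\theta(1)$) and the control of the finitely many early \cesaro terms, where the empirical denominator $D_\theta(t-1)$ can be small.
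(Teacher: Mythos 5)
Your proof is correct and follows essentially the same route the paper indicates (it omits the proof as ``similar to the proof of Theorem~\ref{thm-consistency-param}''): the Fubini/Abel rearrangement of the $P_{>k}(n)$ term, the weighted $\ell^1$ convergence $\sum_k w(k)|P_k(n)-p_k^{\sss(0)}|\to0$ obtained from Corollary~\ref{lemma-hk-pk-conv} plus a Scheff\'e-type argument (the paper's $|u|=u+2u_-$ plus dominated convergence), the uniform lower bound on the denominator from compactness, and the uniform \cesaro lemma (Lemma~\ref{lem:cesaro-uniform-l1}) are exactly the ingredients of that proof, here applied to the ratio $S_{\dot f_\q}/S_{f_\q}$ instead of $\log S_{f_\q}$. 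No gaps.
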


It follows that the maximum likelihood estimator is asymptotically the \textit{unique} solution to the
likelihood equations in compact subsets of the parameter space  in which  $\q_0$ is the unique
zero of $\q\mapsto \dot{\iota}(f_\q)$ (compare Theorem~5.9 in \cite{vdvaart2000asymptotic}).
However, in general proving global uniqueness turns out to be  {difficult}. We present
the following partial results, starting with two useful lemmas.

\begin{lemma}
	Suppose $(v_k)_{k=1}^\infty$ is strictly decreasing with respect to $k$.
	If  $(p_k)_{k=1}^\infty$ and $(q_k)_{k=1}^\infty$ are probability distributions on $\bbN_+$ such that
	\( p_k \le q_k  \) for $k \le K$ and \( p_k > q_k \) for $k > K$, then
	\begin{equation*}
		\sumk p_k v_k > \sum_{k=1}^\infty q_k v_k.
	\end{equation*}
	In case $(v_k)_{k=1}^\infty$ is strictly increasing, the inequality is true in the opposite direction.
	\label{lemma-two-prob-vk}
\end{lemma}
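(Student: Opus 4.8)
The plan is to reduce the claim to a statement about the sign of a single series and then settle it by Abel summation (summation by parts), exploiting the single-crossing structure of $p$ and $q$. Set $\delta_k = p_k - q_k$. Since both $(p_k)_{k=1}^\infty$ and $(q_k)_{k=1}^\infty$ are probability distributions on $\bbN_+$, we have $\sumk \delta_k = 0$, and the hypothesis says $\delta_k \le 0$ for $k \le K$ while $\delta_k > 0$ for $k > K$. The comparison $\sumk p_k v_k$ versus $\sumk q_k v_k$ is exactly the sign of $\sumk \delta_k v_k$, so everything comes down to evaluating this series.

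The key observation is that the partial sums $D_m := \sum_{k=1}^m \delta_k$ all share a single sign. For $m \le K$ this is immediate, since $D_m$ is then a sum of nonpositive terms. For $m > K$ I would instead use $D_m = -\sum_{k>m}\delta_k$, which is valid because $\sumk \delta_k = 0$; as every $\delta_k$ with $k > m \ge K$ is strictly positive, the tail $\sum_{k>m}\delta_k$ is strictly positive and hence $D_m < 0$. Thus $D_m \le 0$ for every $m$, with strict inequality once $m \ge K$, and moreover $D_m \to 0$ as $m \to \infty$, being the negative of a convergent tail.

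With this in hand I would apply summation by parts. Writing $D_0 = 0$ and telescoping,
\begin{equation*}
	\sum_{k=1}^N \delta_k v_k = D_N v_N + \sum_{k=1}^{N-1} D_k\,(v_k - v_{k+1}).
\end{equation*}
Letting $N \to \infty$, the boundary term $D_N v_N$ vanishes, and the remaining series has summands that factor as $D_k$ (of the fixed sign above) times the strictly positive increments $v_k - v_{k+1} > 0$ produced by the strict monotonicity of $(v_k)$. The common sign of the $D_k$, together with the fixed sign of these increments, determines the sign of $\sumk \delta_k v_k$, which is the content of the lemma; strictness is supplied by the indices $k \ge K$, where $D_k < 0$. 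The strictly increasing case follows at once by applying the decreasing case to $(-v_k)$, which simultaneously reverses the monotonicity and the inequality.

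The main obstacle is the control of the boundary term $D_N v_N$ at infinity and, relatedly, the convergence of the two series $\sumk p_k v_k$ and $\sumk q_k v_k$ in the first place. If $(v_k)$ is bounded this is trivial, since $D_N \to 0$; otherwise one needs a mild tail condition guaranteeing $D_N v_N \to 0$ and absolute convergence, which should be read into the hypotheses (in the intended applications $v_k$ is of the form $\dot f_\theta/f_\theta$ or a similar ratio, for which the relevant series converge). Once this integrability point is settled, the partial-sum sign analysis and the Abel summation are routine.
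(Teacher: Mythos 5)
Your mechanism---single-signed partial sums plus Abel summation---is sound, and it is a genuinely different route from the paper's. The paper compares the two halves of the crossing directly: writing $S=\sum_{k\le K}(q_k-p_k)=\sum_{k>K}(p_k-q_k)>0$, it bounds one half by $S$ times the threshold value $v_K$ and the other by $S$ times $v_{K+1}$, which sidesteps entirely the boundary term $D_Nv_N\to 0$ that you correctly flag as your one analytic loose end. Your route is the classical ``single crossing $\Rightarrow$ stochastic dominance $\Rightarrow$ ordering of means of monotone functions'' argument and is, if anything, more transparent; the integrability caveat is real but minor (it is settled whenever $v$ is bounded, and in the paper's applications $v_k$ is of the form $\beta_0/(k+\alpha_0)$ or a logarithmic ratio for which the relevant series converge).

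The genuine gap is the one sentence where you decline to write anything down: ``the common sign of the $D_k$ \ldots determines the sign of $\sum_k\delta_kv_k$, which is the content of the lemma.'' Carry your computation to the end. You have $D_k\le 0$ for all $k$, with $D_K<0$, and $v_k-v_{k+1}>0$; hence every term $D_k(v_k-v_{k+1})$ is nonpositive, the $k=K$ term is strictly negative, and so $\sum_k\delta_kv_k<0$, i.e.\ $\sum_kp_kv_k<\sum_kq_kv_k$. That is the \emph{opposite} of the lemma's stated conclusion for decreasing $(v_k)$. This is not a defect of your method: the hypothesis moves mass of $p$ to large $k$, so $p$ stochastically dominates $q$ and the $p$-mean of a decreasing function must be the smaller one. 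A concrete check is $q_k=2^{-k}$, $p_k=\tfrac12(2/3)^k$, $v_k=1/k$, which satisfies the hypotheses with $K=2$ and gives $\sum_kp_kv_k=\tfrac12\log 3<\log 2=\sum_kq_kv_k$. The lemma as printed has the ``decreasing'' and ``increasing'' cases interchanged, and the paper's own one-line proof makes the matching slip at its middle step: for decreasing $v$ one in fact has $\sum_{k\le K}(q_k-p_k)v_k\ge Sv_K>Sv_{K+1}\ge\sum_{k>K}(p_k-q_k)v_k$, not the reverse. So your argument, completed honestly, proves the corrected statement and refutes the printed one; asserting without checking that the sign you obtain ``is the content of the lemma'' is precisely where your write-up fails.
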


\begin{lemma}
	For a probability distribution $(p_k)_{k=1}^\infty$ and nonnegative sequence $(w_k)_{k=1}^\infty$
	such that $\sum_{k=1}^\infty p_k w_k < \infty$, define $(q_k)_{k=1}^\infty$ by $q_k = p_k w_k / \sum_j p_j w_j$.
	If $(w_k)_{k=1}^\infty$ is strictly increasing, then there exists a $K$ such that $p_k \ge q_k $ for $k \le K$ and $p_k < q_k$ for $k > K$.
	If $(w_k)_{k=1}^\infty$ is strictly decreasing, then there exists a $K$ such that $p_k \le q_k $ for $k \le K$ and $p_k > q_k$ for $k > K$.
	\label{lemma-reweighted-prob}
\end{lemma}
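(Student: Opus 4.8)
The plan is to collapse the two-sided comparison into the sign of a single monotone sequence. Write $W=\sumj p_j w_j$ for the normalizing constant, which is finite and strictly positive under the hypotheses, so that $q_k=p_k w_k/W$. Everything will follow from the elementary identity
\[
  q_k-p_k=\frac{p_k}{W}\bigl(w_k-W\bigr),
\]
valid for every $k$. Since $p_k/W\ge 0$, the sign of $q_k-p_k$ is dictated by the sign of $w_k-W$ (and vanishes wherever $p_k=0$); because the relevant distributions have full support I would assume $p_k>0$ for all $k$, so that $\sign(q_k-p_k)=\sign(w_k-W)$ and the inequalities below are strict exactly where claimed.

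Suppose first that $(w_k)$ is strictly increasing. Then $k\mapsto w_k-W$ is strictly increasing, and I would set $K=\max\{k:w_k\le W\}$. For $k\le K$ we have $w_k\le w_K\le W$, hence $q_k\le p_k$; for $k>K$ maximality gives $w_k>W$, hence $q_k>p_k$. This is precisely the asserted pattern $p_k\ge q_k$ on $\{1,\dots,K\}$ and $p_k<q_k$ beyond it. The strictly decreasing case is the mirror image: now $k\mapsto w_k-W$ is strictly decreasing, one takes $K=\max\{k:w_k\ge W\}$, the large-$k$ tail satisfies $w_k<W$, and the two inequalities flip to $p_k\le q_k$ for $k\le K$ and $p_k>q_k$ for $k>K$.

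The single point that deserves care is that $K$ is well defined and finite, i.e.\ that the threshold value $W$ lies strictly inside the range of $(w_k)$. In the increasing case, writing $w_\infty=\lim_k w_k=\sup_k w_k\in(0,\infty]$, I would note that $W=\sumk p_k w_k<w_\infty\sumk p_k=w_\infty$, the inequality being strict because $w_k<w_\infty$ for every finite $k$ while $\sumk p_k=1$ (this is automatic when $w_\infty=\infty$, since $W<\infty$ by assumption). Hence $w_k>W$ for all large $k$, so $K<\infty$; if already $w_1>W$ one simply takes $K=0$ and the first block is empty. The decreasing case uses the mirror bound $W>\inf_k w_k$. Beyond this finiteness check and the standing positivity of $(p_k)$, the argument is the one-line identity above, so I anticipate no substantive obstacle.
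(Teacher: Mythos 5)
Your proof is correct and rests on the same core fact as the paper's, namely that $q_k/p_k = w_k/\sum_j p_j w_j$ is strictly monotone in $k$; you locate the crossing index by comparing $w_k$ directly with the weighted average $W=\sum_j p_j w_j$, whereas the paper runs an induction on the ratio $q_{j+1}/p_{j+1}>q_j/p_j$ and uses that both sequences sum to one to guarantee a crossing exists — the two devices are equivalent. Your explicit check that $W$ lies strictly inside the range of $(w_k)$, and your standing assumption $p_k>0$ (which the paper's proof also uses implicitly when dividing by $p_j$, and which holds in all applications of the lemma), are both fine.
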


We say that changing  from $\theta_0$ to $\theta$
\textit{induces monotonicity} if $f_\theta(k)/f_{\theta_0}(k)$ is either strictly increasing or strictly decreasing in $k$.

\begin{lemma}
	If changing from $\q_0$ to $\q$ induces monotonicity on $f_\theta/f_{\theta_0}$
	for every $\theta$ in a subset $\Theta' \subset \Theta$, then  $\dot{\iota}(f_\theta) \neq 0$, for every
	$\theta \in \Theta'$.
	\label{lem:monotone-f-neq}
\end{lemma}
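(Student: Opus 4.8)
The plan is to reduce $\dot\iota(f_\theta)=0$ to a statement about the correlation, under a suitable reweighting of the limiting degree distribution, between the per-degree score $g_\theta(k):=\dot f_\theta(k)/f_\theta(k)$ and the likelihood ratio $w(k):=f_\theta(k)/f_{\theta_0}(k)$, and then to rule it out with the single-crossing machinery of Lemmas~\ref{lemma-reweighted-prob} and~\ref{lemma-two-prob-vk}. First I would apply the identity $p_{>k}^{\sss (0)}=f_{\theta_0}(k)p_k^{\sss (0)}\big/\sum_j f_{\theta_0}(j)p_j^{\sss (0)}$ from Lemma~\ref{lemma-limit-degree-dist-equality} to the first term of \eqref{eqn:iota-dot}. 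Writing $\pi_k\propto f_{\theta_0}(k)p_k^{\sss (0)}$ and $\tilde\pi_k\propto f_\theta(k)p_k^{\sss (0)}$ for the two induced probability vectors on $\bbN_+$, this turns the score into $\dot\iota(f_\theta)=\bbE_\pi[g_\theta]-\bbE_{\tilde\pi}[g_\theta]$. The structural point is that $\tilde\pi_k\propto w(k)\,\pi_k$, so $\tilde\pi$ is precisely the $w$-reweighting of $\pi$ featured in Lemma~\ref{lemma-reweighted-prob}.

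To prove that the vector $\dot\iota(f_\theta)$ is nonzero it suffices to find one direction $u\in\bbR^d$ with $u^T\dot\iota(f_\theta)\neq0$, and here $u^T\dot\iota(f_\theta)=\bbE_\pi[u^Tg_\theta]-\bbE_{\tilde\pi}[u^Tg_\theta]$. Since $w$ is strictly monotone by hypothesis, Lemma~\ref{lemma-reweighted-prob} produces a single crossing point $K$ between $\pi$ and $\tilde\pi$; if in addition $k\mapsto u^Tg_\theta(k)$ is strictly monotone, then Lemma~\ref{lemma-two-prob-vk} with weights $v_k=u^Tg_\theta(k)$ applied to the pair $(\pi,\tilde\pi)$ yields $\bbE_\pi[u^Tg_\theta]\neq\bbE_{\tilde\pi}[u^Tg_\theta]$, hence $\dot\iota(f_\theta)\neq0$. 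Equivalently, using $w(k)g_\theta(k)=\dot f_\theta(k)/f_{\theta_0}(k)=\dot w(k)$ one gets the clean identity $\dot\iota(f_\theta)=-\mathrm{Cov}_\pi(g_\theta,w)/\bbE_\pi[w]$, so the argument is Chebyshev's correlation inequality in disguise, and the sign of $u^T\dot\iota$ is determined by whether $u^Tg_\theta$ and $w$ are comonotone or antimonotone.

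Everything thus hinges on producing a fixed direction $u$ along which the score is strictly monotone in $k$. The natural choice is $u=\theta-\theta_0$: integrating along $\theta_s=\theta_0+s(\theta-\theta_0)$ gives $\log w(k)=\int_0^1(\theta-\theta_0)^Tg_{\theta_s}(k)\,ds$, so the assumed strict monotonicity of $w$ says exactly that the \emph{path-averaged} directional score is strictly monotone. For parametrizations that are log-linear in $\theta$---for instance $f_\theta(k)=k^\beta$, where $g_\theta(k)=\log k$ is independent of $\theta$---the integrand does not depend on $s$, so $(\theta-\theta_0)^Tg_\theta(k)=\log w(k)$ and monotonicity is immediate. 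For the richer models one verifies monotonicity of a coordinate of $g_\theta$ directly; e.g.\ for $f_\theta(k)=(k+\alpha)^\beta$ both coordinates $\beta/(k+\alpha)$ and $\log(k+\alpha)$ of $g_\theta$ are strictly monotone, so a coordinate direction serves.

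I expect the crux to be precisely this last step---passing from monotonicity of the finite ratio $w$ (a comparison of $\theta$ with $\theta_0$) to monotonicity of the infinitesimal score $u^Tg_\theta$ at $\theta$. The two agree for exponential-family parametrizations but need not coincide in general, so an unconditional proof would seem to require either a log-concavity/convexity hypothesis on $\theta\mapsto f_\theta(k)$ or a model-specific argument showing that the weights $\pi$ cannot decorrelate $g_\theta$ from $w$; this is presumably why the statement is offered only as a partial result.
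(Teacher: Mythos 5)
Your proposal follows the paper's own route almost exactly: the paper likewise uses Lemma~\ref{lemma-limit-degree-dist-equality} to rewrite \eqref{eqn:iota-dot} as
\begin{equation*}
\dot{\iota}(f_\theta)=\sum_{k=1}^\infty p_{>k}^{\sss (0)}\frac{\dot f_\theta}{f_\theta}(k)-\sum_{k=1}^\infty q_k^{\sss (0,\theta)}\frac{\dot f_\theta}{f_\theta}(k),
\qquad q_k^{\sss (0,\theta)}\propto p_{>k}^{\sss (0)}\,\frac{f_\theta(k)}{f_{\theta_0}(k)},
\end{equation*}
which is precisely your pair $(\pi,\tilde\pi)$; it then invokes Lemma~\ref{lemma-reweighted-prob} with weights $w_k=f_\theta(k)/f_{\theta_0}(k)$ to obtain the single crossing, and Lemma~\ref{lemma-two-prob-vk} with $v_k=(\dot f_\theta/f_\theta)(k)$ ``understood component-wise'' to conclude that $\dot{\iota}(f_\theta)$ has a definite sign. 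So the decomposition, the two auxiliary lemmas, and the Chebyshev-covariance interpretation are all the same as in the paper.

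The point at which you stop short --- that Lemma~\ref{lemma-two-prob-vk} needs strict monotonicity of $k\mapsto u^T(\dot f_\theta/f_\theta)(k)$ in some direction $u$, and that this is \emph{not} implied by strict monotonicity of the ratio $f_\theta/f_{\theta_0}$ --- is not resolved in the paper either: its proof simply applies Lemma~\ref{lemma-two-prob-vk} componentwise without checking the hypothesis that $(v_k)$ is strictly monotone. In other words, the paper tacitly assumes exactly the extra condition you isolate; it does hold in all the worked examples (for $f_{\alpha,\beta}(k)=(k+\alpha)^\beta$ the components $\beta/(k+\alpha)$ and $\log(k+\alpha)$ are strictly monotone in $k$), which is presumably why the authors did not remark on it. Your observation that monotonicity of the finite log-ratio only gives monotonicity of the path-averaged directional score, not of the score at $\theta$ itself, is correct. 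So: same approach as the paper, executed at least as carefully; the caveat you raise is a genuine hidden hypothesis of the lemma as stated, not a defect of your argument relative to the paper's.
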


For illustrative purposes, we next study in detail different variants of the parametric form $k \mapsto (k+\alpha)^\beta$ in the sections \ref{sec:ex-alpha-beta}--\ref{sec:ex-alpha}.
However, we point out our results work in much more general capacity, as studied in section \ref{sec:ex-general}.  For instance, one could easily work out that our results apply to the parametric specification \(k \mapsto (\log(k+\alpha))^\beta\). 
We provide a toy example in Section~\ref{sec:ex-log-beta} for such a parametric form besides $k \mapsto (k+\alpha)^\beta$ and their variants.

\subsubsection{The model $f_{\alpha, \beta}(k) = (k+\alpha)^\beta$}
\label{sec:ex-alpha-beta}
In the case that $f_{\alpha, \beta}(k) = (k+\alpha)^\beta$,  changing the parameter $(\alpha_0, \beta_0)$ to
another parameter $(\a,\b)$ does not always induce monotonicity on $f_{\alpha,\beta}/ f_{\alpha_0,\beta_0}$.
An analysis of the derivative of the function $g(x) = (x + \alpha)^\beta / (x+\alpha_0)^{\beta_0}$  yields that
$f_{\alpha,\beta}/f_{\alpha_0,\beta_0}(k)$ is  increasing in $k$ on
$\{ (\alpha,\beta): \beta - \beta_0 + \beta \alpha_0 - \beta_0 \alpha \ge 0, \beta \ge \beta_0 \}$
and is decreasing on the set $\{ (\alpha,\beta) : \beta - \beta_0 + \beta \alpha_0 -\beta_0\alpha \ge 0, \beta \le \beta_0 \}$.
The preceding technique does prove that $(\alpha_0, \beta_0)$ is a unique root of $\dot{\iota}(f_{\alpha,\beta})=0$
in these sets, but this does not exhaust the full parameter set.

However, the lemmas ~\ref{lemma-reweighted-prob} and \ref{lemma-two-prob-vk} may be used to prove local uniqueness. The Hessian matrix of
$\iota(f_{\alpha,\beta})$ evaluated at $(\alpha_0,\beta_0)$ can be calculated as, with
the shorthand notation $f_0 := f_{\alpha_0,\beta_0}$,
\begin{equation}
	\ddot{\iota} (f_{\alpha_0, \beta_0}) = - \frac{1}{a^2}
	\begin{pmatrix}
		ab - c^2 & ad-ce \\ ad-ce\ \  & af - e^2
	\end{pmatrix},
	\label{eqn-hessian-iota-alpha-beta}
\end{equation}
where the quantities $a, b, c,d, e, f$ are defined as
\begin{alignat*}{4}
	 & a &  & = \sumk \pkzero f_0 (k),                              &  & b &  & = \sumk \pkzero f_0(k) \frac{\beta_0^2}{(k+ \alpha_0)^2},                 \\
	 & c &  & = \sumk \pkzero f_0(k) \frac{\beta_0}{ k + \alpha_0}, &  & d &  & =  \sumk \pkzero f_0(k) \frac{\beta_0}{k + \alpha_0} \log (k + \alpha_0), \\
	 & e &  & = \sumk \pkzero f_0(k) \log (k + \alpha_0),\qquad     &  & f &  & =\sumk \pkzero f_0(k) \log^2 (k + \alpha_0).
\end{alignat*}
It follows that $ab- c^2$ is strictly positive, since by the Cauchy--Schwarz inequality,
where $K$ follows the law $(\pkzero)_{k=1}^\infty$,
\begin{equation*}
	\Bigl( \bbE_{p_0}\Bigl[f_0(K)\frac{\beta_0}{ K+ \alpha_0} \Bigr] \Bigr)^2
	< {\bbE_{p_0} [f_0 (K)]} \, {\bbE_{p_0}\Bigl[ f_0(K) \frac{\beta_0^2}{(K+\alpha_0)^2}\Bigr]}.
\end{equation*}
The same arguments work to prove that $af - e^2 >0$ and $bf - d^2 >0$.  The determinant of the Hessian matrix is given by
\begin{align*}
	|\ddot{\iota}(f_{\alpha_0,\beta_0})| & = a^2 bf + c^2 e^2 - abe^2 - afc^2 - a^2 d^2 - c^2 e^2 + 2 acde \\
	                                     & = a^2(bf-d^2) - ae(be-cd) - ac(cf-ed).
\end{align*}
This can be shown to be strictly positive by showing that both $be -cd <0$  and $ cf - ed <0$.
We shall  prove $be <cd$; the proof that also $cf <ed$ is similar.
Define $x_k = \pkzero f_0(k) \beta_0/(k+\alpha_0)$ and $u_k = (k+\alpha_0) \log(k+\alpha_0) / \beta_0$, $y_k = x_k u_k = \pkzero f_0(k) \log(k+\alpha_0)$.  Define $p_x(k) = x_k / \sum_{j} x_j$ and $p_y(k) = y_k / \sum_{j} y_j$.
Since $u_k$ is strictly monotone decreasing, an application of Lemma~\ref{lemma-reweighted-prob}
tells us that there exists a $K$ such that $p_x(k) \ge p_y(k)$ for $k \le K$ and $p_x(k) < p_y(k)$ for $k > K$.
Applying Lemma~\ref{lemma-two-prob-vk} with $w_k = \beta_0/(k+\alpha_0)$, we see that $be < cd$.

We conclude that the Hessian matrix is negative definite, so that $(\alpha_0, \beta_0)$ is
a unique root of $\dot{\iota}(f_{\alpha,\beta})$ in a neighborhood around $(\alpha_0,\beta_0)$.

\subsubsection{Global concavity  of the model $f_\beta (k) = (k + \alpha_0)^\beta$ with known {$\alpha_0$}}
\label{sec:ex-beta}
The single-parameter model $f(k)=k^\beta$ is a main example of sublinear \pa, treated in \cite[Section 5.8]{barabasi2016network}.
We allow in addition a nonzero offset $\a_0$ and consider $f_\beta(k) = (k + \alpha_0)^\beta$, with $\beta$ the only parameter.
The limit function \eqref{eqn:iota-dot} reduces to
\begin{equation*}
	\dot\iota(f_\beta)   = \sum_{k=1}^\infty \log (k + \alpha_0) p_{>k}^{\sss (0)}  -\frac{\sum_{k=1}^\infty p_k^{\sss (0)} (k + \alpha_0)^\beta \log (k+\alpha_0) }{ \sum_{k=1}^\infty p_k^{\sss (0)} (k+\alpha_0)^\beta} .
\end{equation*}
The second order derivative can be computed as
\begin{align*}
	\ddot\iota(f_\beta)
	= -\frac{\sum_{k=1}^\infty p_k^{\sss (0)} (k + \alpha_0)^\beta \log^2 (k+\alpha_0)}{\sum_{k=1}^\infty p_k^{\sss (0)} (k + \alpha_0)^\beta}
	+ \Bigl(\frac{ \sum_{k=1}^\infty p_k^{\sss (0)} (k + \alpha_0)^\beta \log (k + \alpha_0)  }{\sum_{k=1}^\infty p_k^{\sss (0)} (k+\alpha_0)^\beta}\Bigr)^2.
\end{align*}
This can be seen to be strictly negative for any $\beta \in [0,1]$, as a consequence of the Cauchy--Schwarz inequality,
with $K$ following the law $(p_k^{\sss (0)})_{k=1}^\infty$,  
\begin{align*}
	\bigl( \bbE_{p_0} \bigl[(K + \alpha_0)^{\beta}\log (K + \alpha_0)\bigr]\bigr)^2
	  < \bbE_{p_0}\bigl[ (K + \alpha_0)^{\beta}\log^2 (K+\alpha_0)\bigr] \bbE_{p_0} (K + \alpha_0)^\beta.
\end{align*}
Thus, the limiting log likelihood is concave and the root of the limiting score function is unique.
Another perspective is that moving the parameter $\beta$ away from $\beta_0$ induces monotonicity on
$f_\beta/f_{\beta_0}(k) = (k+\alpha_0)^{\beta-\beta_0}$.

\subsubsection{Almost-global uniqueness in case of $f(\alpha) = (k + \alpha)^{\beta_0}$ with known {$\beta_0$}}
\label{sec:ex-alpha}
For any $\alpha \neq \alpha_0$,  the function $k\mapsto (k+\alpha)^{\beta_0}/(k+\alpha_0)^{\beta_0}$
is monotone increasing (when $\alpha < \alpha_0$) or decreasing (when $\alpha > \alpha_0$).
Applying Lemma~\ref{lem:monotone-f-neq}, we conclude that the root is unique in every bounded domain.

\subsubsection{Global concavity of the model $f(\beta) = \log^\beta (k)$}
\label{sec:ex-log-beta}
For further illustrative purposes, we study the toy case of the \pa function being $k \mapsto \log^\beta(k)$, where $\beta$ is the only parameter.
We easily calculate the score function and its derivative as follows 
\begin{equation*}
    \begin{aligned}
        \dot{\iota}(f_\beta) & = \sum_{k=1}^\infty \log \log (k) \qkzero - \frac{\sum_{k=1}^\infty \pkzero \log^\beta (k) \log \log (k)}{ \sumk \pkzero \log^\beta (k) },\\
        \ddot{\iota}(f_\beta) & = 
        -\frac{\sum_{k=1}^\infty \pkzero \log^\beta (k) ( \log \log (k) )^2 }{\sum_{k=1}^\infty \pkzero \log^\beta (k)}
        + \biggl(\frac{ \sum_{k=1}^\infty \pkzero  \log^\beta (k) \log \log (k)  }{\sum_{k=1}^\infty \pkzero \log^\beta (k)}\biggr)^2.
    \end{aligned}
\end{equation*}
Similar to the analysis in Section~\ref{sec:ex-beta}, $\ddot{\iota}(f_\beta)$ here is strictly negative for any $\beta>0$ by the Cauchy--Schwarz inequality
\begin{align*}
	\bigl( \bbE_{p_0} \bigl[\log^\beta (K) \log \log (K) \bigr]\bigr)^2
	  < \bbE_{p_0}\bigl[ \log^\beta(K) (\log \log (K))^2 \bigr] \bbE_{p_0} [\log^\beta(K)],
\end{align*}
where $K \sim (\pkzero)_{k=1}^\infty$ is an auxiliary random variable.
As such, the limiting score function is monotone decreasing with respect to the parameter $\beta$, and has a unique zero at \( \beta = \beta_0 \).
Furthermore, the limiting log-likelihood is concave with a unique maximizer.  

\subsubsection{The case of general {$f_\theta$}}
\label{sec:ex-general}
In practice, it could happen that the score function \eqref{eqn:iota-n} has multiple roots,
particularly if its limit \eqref{eqn:iota-dot} has multiple roots.
In such cases, we may employ the \textit{empirical estimators} in \cite{gao2017consistent} to identify the correct one.
These are defined as follows (see their Equation (2)):
\begin{equation*}
	\hat{r}_k(n) = \frac{N_{>k}(n)}{N_k(n)}.
\end{equation*}
In  \cite{gao2017consistent} these estimators are shown to converge to $f_{\theta_0}(k)/(\sumj f_{\theta_0}(j)p^{\sss (0)}_j)$, almost surely.
This suggests an \textit{empirical estimator} for the parameter $\q \in \Theta \subset \bbR^{d}$ as the solution of  the system of equations:
\begin{equation}
	\label{eqn:ee-equation}
	\frac{f_\theta(k)}{f_\theta(1)} = \frac{\hat{r}_{k}(n)}{ \hat{r}_1(n)},\qquad  k=2,3,\ldots,d+1.
\end{equation}
Two computational strategies suggest themselves.  If finding the set of solutions
to the likelihood equations $\dot{\iota}_n(f_{\theta}) = 0$ is easier than solving \eqref{eqn:ee-equation},
then we may select from this set the solution that minimizes
\begin{equation*}
	\sum_{k=2}^{d+1} \left| \frac{f_\theta(k)}{f_\theta(1)} - \frac{\hat{r}_{k}(n)}{ \hat{r}_1(n)} \right|.
\end{equation*}
On the other hand, if \eqref{eqn:ee-equation} is easier to solve than \eqref{eqn:iota-n}, then we
may find the solution of likelihood equations in a neighborhood of the solution of \eqref{eqn:ee-equation},
possibly by an iterative scheme such as Newton's algorithm.

In both cases the resulting estimator will be consistent.

\begin{theorem}
	Under the conditions of Theorem~\ref{thm-consistency-param}, the
	solution of the likelihood equation resulting from either of the two indicated procedures  is almost surely
	consistent for $\q_0$.
	\label{thm-consistency-param2}
\end{theorem}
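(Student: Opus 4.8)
The plan is to treat both construction recipes as ways of selecting, and \emph{anchoring}, a zero of the empirical score $\dot\iota_n$ to the empirical-estimator target, and then to show that this anchor is consistent. Two ingredients are available off the shelf. First, by the result of \cite{gao2017consistent} quoted above, $\hat{r}_k(n)\to f_{\theta_0}(k)/\bigl(\sum_j f_{\theta_0}(j)p_j^{\sss (0)}\bigr)$ almost surely, so the normalised ratios satisfy $\hat{r}_k(n)/\hat{r}_1(n)\to f_{\theta_0}(k)/f_{\theta_0}(1)$ almost surely for each $k$, the normalising constant cancelling. Second, Theorem~\ref{thm-consistency-param} produces a consistent maximiser $\hat\theta_n^{\mathrm{ML}}\to\theta_0$; since $\theta_0$ is interior, this maximiser is eventually a stationary point and hence an element of the solution set $S_n=\{\theta\in\Theta:\dot\iota_n(f_\theta)=0\}$. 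Thus $S_n$ is nonempty and contains a consistent sequence, and by Proposition~\ref{prop-uniform-conv-likelihood} every subsequential limit of a point of $S_n$ is a zero of the limiting score $\dot\iota$.

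For the first procedure I would write the selection criterion as $c_n(\theta)=\sum_{k=2}^{d+1}\bigl|f_\theta(k)/f_\theta(1)-\hat{r}_k(n)/\hat{r}_1(n)\bigr|$ and its almost sure limit as $g(\theta)=\sum_{k=2}^{d+1}\bigl|f_\theta(k)/f_\theta(1)-f_{\theta_0}(k)/f_{\theta_0}(1)\bigr|$. Continuity of $\theta\mapsto f_\theta(k)$ and compactness of $\Theta$ give $\sup_{\theta}|c_n(\theta)-g(\theta)|\to0$ almost surely. Evaluating at the consistent root $\hat\theta_n^{\mathrm{ML}}\in S_n$ shows $\min_{\theta\in S_n}c_n(\theta)\le c_n(\hat\theta_n^{\mathrm{ML}})\to g(\theta_0)=0$, so the minimiser $\hat\theta_n$ satisfies $c_n(\hat\theta_n)\to0$ and therefore $g(\hat\theta_n)\to0$. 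Passing to a subsequential limit $\theta^\ast$ (which exists by compactness) I obtain simultaneously $\dot\iota(f_{\theta^\ast})=0$ (because $\hat\theta_n\in S_n$ and the score converges uniformly) and $g(\theta^\ast)=0$ (agreement of the ratios on $k=2,\dots,d+1$). It then remains to argue that $\theta_0$ is the only zero of $\dot\iota$ whose ratios coincide with those of $f_{\theta_0}$, which forces $\theta^\ast=\theta_0$; since every subsequence has this limit, $\hat\theta_n\to\theta_0$. This is exactly the point of adjoining the empirical ratios: they break the multiplicity of roots of $\dot\iota$ that is left open before this theorem.

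For the second procedure I would first record that the empirical estimator $\tilde\theta_n$ solving \eqref{eqn:ee-equation} is itself consistent: its defining equations converge to $f_\theta(k)/f_\theta(1)=f_{\theta_0}(k)/f_{\theta_0}(1)$, $k=2,\dots,d+1$, which has $\theta_0$ as solution, so a standard $Z$-estimator argument (as in Theorem~5.9 of \cite{vdvaart2000asymptotic}) gives $\tilde\theta_n\to\theta_0$ once the associated Jacobian is nonsingular at $\theta_0$. Around $\theta_0$ the limiting score $\dot\iota$ has nonsingular derivative---this is the negative definiteness of $\ddot\iota(f_{\theta_0})$ verified by the Cauchy--Schwarz computations in Section~\ref{sec:ex-alpha-beta}---so by the uniform convergence of $\dot\iota_n$ and its derivative there is, for large $n$, a unique zero of $\dot\iota_n$ in a fixed ball about $\theta_0$, lying in the domain of attraction of Newton's iteration. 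Initialising Newton at the consistent $\tilde\theta_n$ therefore lands on this zero, and the resulting estimator converges to $\theta_0$.

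The hard part is the identifiability bookkeeping rather than any single estimate. For the first procedure one must upgrade the global identifiability assumed in Theorem~\ref{thm-consistency-param} (equality of $f_\theta$ and $f_{\theta_0}$ up to a constant for \emph{all} $k$) to the statement that no spurious root of $\dot\iota$ reproduces the true ratios on the finite block $k=2,\dots,d+1$; for the second procedure the delicate point is controlling the Newton basin uniformly in $n$, which rests on the local nonsingularity of the Hessian together with convergence of the second derivatives of $\iota_n$, not only of $\dot\iota_n$ as supplied by Proposition~\ref{prop-uniform-conv-likelihood}.
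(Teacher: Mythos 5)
First, a point of comparison: the paper does not actually include a proof of Theorem~\ref{thm-consistency-param2} --- the theorem is stated immediately after the description of the two procedures and left unproved --- so your proposal can only be judged on its own terms. Your overall route is the natural one: reduce everything to (a) the almost sure convergence $\hat r_k(n)/\hat r_1(n)\to f_{\theta_0}(k)/f_{\theta_0}(1)$ from \cite{gao2017consistent} and (b) the existence of a consistent point in the solution set $S_n$ supplied by Theorem~\ref{thm-consistency-param}, and then argue that the selection rule cannot pick a spurious root. The reduction steps are sound, modulo two small caveats: $\theta_0$ must be interior for the maximiser to lie in $S_n$ eventually, and Proposition~\ref{prop-uniform-conv-likelihood} carries extra derivative bounds that are not part of ``the conditions of Theorem~\ref{thm-consistency-param}'', so invoking it quietly strengthens the hypotheses.

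The genuine gap is the one you flag yourself and do not close. For procedure 1 you arrive at a subsequential limit $\theta^\ast$ with $\dot\iota(f_{\theta^\ast})=0$ and $f_{\theta^\ast}(k)/f_{\theta^\ast}(1)=f_{\theta_0}(k)/f_{\theta_0}(1)$ for $k=2,\dots,d+1$, and you need $\theta^\ast=\theta_0$. The paper's identifiability assumption only says that proportionality of $f_\theta$ to $f_{\theta_0}$ for \emph{all} $k$ forces $\theta=\theta_0$; agreement of $d$ ratios on a finite block does not imply this, and being a zero of $\dot\iota$ does not rescue it --- there is no general principle preventing a spurious zero of $\dot\iota$ from also matching the first $d$ ratios. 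What is actually needed is injectivity of the finite map $\theta\mapsto\bigl(f_\theta(k)/f_\theta(1)\bigr)_{k=2}^{d+1}$ on $\Theta$, which is precisely the condition under which the system \eqref{eqn:ee-equation} is well posed and hence is implicit in the statement of both procedures. Once you grant it, your argument simplifies considerably: $c_n(\hat\theta_n)\le c_n(\hat\theta_n^{\mathrm{ML}})\to0$ gives $g(\hat\theta_n)\to0$, and injectivity plus continuity on the compact $\Theta$ make $\theta_0$ a well-separated zero of $g$, so $\hat\theta_n\to\theta_0$ with no appeal to the limiting score at all. The same remark applies to procedure 2: the clean argument is that the empirical estimator is consistent under this injectivity, and any root of the likelihood equations constrained to a small enough neighbourhood of a consistent point is consistent by construction, existence of such a root being guaranteed by the interior maximiser; your Newton-basin discussion instead imports nonsingularity of $\ddot\iota(f_{\theta_0})$, which is likewise not among the theorem's stated hypotheses. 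In short: right approach, honest about the missing identifiability, but the statement cannot be proved under its literal hypotheses without making the finite-ratio identifiability explicit.
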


\section{Asymptotic Normality}
\label{sec:param-asymp-normality}
We prove asymptotic normality of consistent solutions to the likelihood equations, as follows. Recall that for $\theta \in \Theta \subset \bbR^d$ and every $k \in \bbN_+$, $\dot{f}_\theta(k)\in\bbR^d$ and $\ddot{f}_\theta(k) \in \bbR^{d \times d}$ are the gradient and the Hessian matrix of $f_\theta(k)$ with respect to $\theta$, respectively.

\begin{theorem}
	\label{thm-param-clt}
	In the model $\{ f_\theta : \theta \in \Theta \}$ stated in Section~\ref{SectionModel} with compact parameter
	space $\Theta\subset \bbR^d $ and \pa functions satisfying $f_\q(k)\le C k^\b$ for some  constants  $C$ and $\beta<1$ or
	$f_\q(k)=k+\a$ for some constant $\a>-1$, for every $k$,  and  every $\q\in\Theta$, assume in addition
	that the  \pa functions satisfy, for some constants $\gamma > 0$ and $C > 0$,
	\begin{align}
		\bigl\|\dot f_\theta(k)\bigr\|+\bigl\| \ddot f_\theta(k)\bigr\|                                    & \le C k \log^\gamma k, \label{eqn-max-partial}    \\
		\Bigl\|\frac{\dot f_\theta}{f_\theta}(k)\Bigr\|+ \Bigl\|\frac{\ddot f_\theta}{f_\theta}(k) \Bigr\| & \le C \log^\gamma k. \label{eqn-partial-quotient}
	\end{align}
	Then  as $n \rightarrow \infty$, for any consistent sequence of solutions $\hat\theta_n$ to the likelihood equations
	$\dot{\iota}_n(f_\q)=0$,  the sequence $\sqrt{n} (\hat{\theta}_n - \theta_0) $ converges in distribution to the
	$N(0, V_0^{-1})$ distribution, for $V_0$ the ($d\times d$) matrix given by
	\begin{equation}
		V_0 =\sumk  \frac{ \dot f_{\theta_0} \dot f_{\theta_0}^T}{ f_{\theta_0}^2}(k) \qkzero
		- \Bigl(\sumk \frac{\dot f_{\theta_0}}{ f_{\theta_0} }(k) \qkzero \Bigr) \Bigl(\sumk \frac{\dot f_{\theta_0}^T}{ f_{\theta_0} }(k) \qkzero\Bigr).
		\label{eqn:V0}
	\end{equation}
	In particular this is true for the maximum likelihood estimator if $\q_0$ is identifiable and interior to $\Theta$.
\end{theorem}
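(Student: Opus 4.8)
The plan is to treat $\hat\theta_n$ as a Z-estimator and run the classical one-step linearization of the estimating equation, reducing the claim to (a) a martingale central limit theorem for the normalized score at $\theta_0$ and (b) convergence of the random Hessian of the rescaled log-likelihood to a deterministic limit. First I would invoke consistency (which is assumed for the general statement, and follows from Theorem~\ref{thm-consistency-param} for the maximum likelihood estimator when $\theta_0$ is identifiable and interior) to guarantee that eventually $\hat\theta_n$ is an interior zero of $\theta\mapsto\dot\iota_n(f_\theta)$. Expanding each coordinate of $\dot\iota_n(f_{\hat\theta_n})=0$ by the mean value theorem gives $0=\dot\iota_n(f_{\theta_0})+\ddot\iota_n(f_{\tilde\theta_n})(\hat\theta_n-\theta_0)$ for intermediate points $\tilde\theta_n$ on the segment between $\hat\theta_n$ and $\theta_0$ (one per row, which is harmless since all tend to $\theta_0$), whence $\sqrt n(\hat\theta_n-\theta_0)=-\ddot\iota_n(f_{\tilde\theta_n})^{-1}\,n^{-1/2}\dot\ell_n(f_{\theta_0})$. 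Everything then reduces to the two displayed factors.

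For the score, recall from \eqref{eqn:log-likelihood} that $\dot\ell_n(f_{\theta_0})=\sum_{t=2}^n X_t$ with martingale increments $X_t=(\dot f_{\theta_0}/f_{\theta_0})(D_t)-\bbE_{\theta_0}[(\dot f_{\theta_0}/f_{\theta_0})(D_t)\mid\calF_{t-1}]$, and I would apply a martingale CLT to $n^{-1/2}\sum_{t=2}^n X_t$. The conditional covariance of $X_t$ is a difference of $h$-averages against the attachment weights $\pi_k(t-1)=f_{\theta_0}(k)P_k(t-1)/\sumj f_{\theta_0}(j)P_j(t-1)$. By Corollary~\ref{lemma-hk-pk-conv}, whose budget $h(k)\le Ck^2$ (sublinear) resp.\ $h(k)\le Ck^r$ with $r<2+\alpha$ (affine) is met because \eqref{eqn-partial-quotient} forces the summands to be $O(\log^{c}k\,f_{\theta_0}(k))$, each $\pi_k(t-1)\asconv f_{\theta_0}(k)\pkzero/\malt=\qkzero$, the last identity being Lemma~\ref{lemma-limit-degree-dist-equality}. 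Hence every conditional covariance tends almost surely to $V_0$ in \eqref{eqn:V0}, and a Cesàro argument yields $n^{-1}\sum_{t=2}^n\mathrm{Var}_{\theta_0}[X_t\mid\calF_{t-1}]\asconv V_0$. A conditional Lyapunov bound closes the CLT: although the increments grow like $\log^\gamma D_t$, the quantity $\bbE_{\theta_0}[\|X_t\|^{2+\delta}\mid\calF_{t-1}]$ is again an $h$-average with $h(k)\le C\log^{(2+\delta)\gamma}k\,f_{\theta_0}(k)$ inside the Corollary's class, so it converges to a finite limit and $n^{-(1+\delta/2)}\sum_{t=2}^n(\cdots)=O(n^{-\delta/2})\to0$. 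Thus $n^{-1/2}\dot\ell_n(f_{\theta_0})\rightsquigarrow N(0,V_0)$.

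For the Hessian factor I would first establish $\sup_{\theta\in\Theta}\|\ddot\iota_n(f_\theta)-\ddot\iota(f_\theta)\|\to0$ almost surely, exactly as in Proposition~\ref{prop-uniform-conv-likelihood} but one derivative higher; this is where the conditions \eqref{eqn-max-partial}--\eqref{eqn-partial-quotient} enter, keeping every $h$ produced by differentiating $\dot\iota_n$ twice within the admissible class of Corollary~\ref{lemma-hk-pk-conv}. Together with continuity of $\theta\mapsto\ddot\iota(f_\theta)$ and $\tilde\theta_n\to\theta_0$, this gives $\ddot\iota_n(f_{\tilde\theta_n})\asconv\ddot\iota(f_{\theta_0})$. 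Differentiating \eqref{eqn:iota-dot} once more and using $\sumj f_{\theta_0}(j)\pkzero=\malt$ together with $\qkzero=f_{\theta_0}(k)\pkzero/\malt$ collapses the two additive pieces and yields the Bartlett-type identity $\ddot\iota(f_{\theta_0})=-V_0$. Slutsky's lemma then gives $\sqrt n(\hat\theta_n-\theta_0)\rightsquigarrow -(-V_0)^{-1}N(0,V_0)=N(0,V_0^{-1})$, the sandwich $V_0^{-1}V_0V_0^{-1}$ collapsing to $V_0^{-1}$. The maximum likelihood statement follows because identifiability and interiority force the maximizer to solve $\dot\iota_n(f_\theta)=0$, making it a consistent solution of the likelihood equations to which the above applies.

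I expect the main obstacle to be the martingale CLT, specifically the conditional Lyapunov condition with unbounded increments: one must argue that the $(2+\delta)$-th conditional moments of $X_t$ stay within the growth budget of Corollary~\ref{lemma-hk-pk-conv} uniformly enough to average out, and simultaneously justify interchanging the $t\to\infty$ limit with the infinite sum over $k$ in the conditional covariance. The log-power conditions \eqref{eqn-max-partial}--\eqref{eqn-partial-quotient} are calibrated precisely so that all these series-valued characteristics remain dominated by $h(k)\le Ck^2$ in the sublinear case and $h(k)\le Ck^r$ with $r<2+\alpha$ in the affine case, the regimes covered by the Corollary.
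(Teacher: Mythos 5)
Your proposal is correct and follows essentially the same route as the paper: row-wise mean-value linearization of the likelihood equations, a martingale CLT for $\sqrt n\,\dot\iota_n(f_{\theta_0})$ with the conditional covariances handled via Corollary~\ref{lemma-hk-pk-conv}, Lemma~\ref{lemma-limit-degree-dist-equality} and the Cesàro lemma, and uniform convergence of the Hessian to $-V_0$. The only (immaterial) difference is that you verify negligibility via a conditional Lyapunov bound, whereas the paper observes that $D_t\le t$ and \eqref{eqn-partial-quotient} make the increments uniformly $O(\log^\gamma n)=o(\sqrt n)$, so the conditional Lindeberg condition holds trivially.
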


\begin{corollary}
	For the parametric family $f_{\alpha,\beta}(k) = (k + \alpha)^\beta$ with true parameter
	$(\alpha_0,\beta_0)$ in the interior of the parameter set
	$\Theta_\varepsilon = [-1 + \varepsilon, \varepsilon^{-1}] \times [0, 1]$ for some small $\varepsilon>0$,
	the \mle $(\hat{\alpha}_n, \hat{\beta}_n)$ satisfies, as  $n \rightarrow \infty$,
	\begin{equation*}
		\sqrt{n} \left( \begin{bmatrix}
				\hat{\alpha}_n \\
				\hat{\beta}_n
			\end{bmatrix}
		-
		\begin{bmatrix}
				\alpha_0 \\
				\beta_0
			\end{bmatrix}\right)
		\rightsquigarrow N(0, V_{\alpha_0, \beta_0}^{-1}),
	\end{equation*}
	where $V_{\alpha_0, \beta_0}$ is defined as the negative of $\ddot{\iota}(f_{\alpha_0, \beta_0})$ in \eqref{eqn-hessian-iota-alpha-beta}.
	\label{cor:k-alpha-beta}
\end{corollary}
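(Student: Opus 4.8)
The plan is to obtain the corollary as a direct specialization of Theorem~\ref{thm-param-clt} to the family $f_{\alpha,\beta}(k)=(k+\alpha)^\beta$ on $\Theta_\varepsilon$, so that the work splits into two parts: (a) verifying the hypotheses of that theorem for this concrete model, and (b) identifying the abstract information matrix $V_0$ of \eqref{eqn:V0} with the explicit matrix $-\ddot\iota(f_{\alpha_0,\beta_0})$ of \eqref{eqn-hessian-iota-alpha-beta}. Part (b) is the real content; part (a) is bookkeeping.

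For part (a), I would first note that $\Theta_\varepsilon$ is compact and that each $f_{\alpha,\beta}$ fits the dichotomy of Section~\ref{SectionModel}: for $\beta<1$ one has $(k+\alpha)^\beta\le(1+\varepsilon^{-1})\,k^\beta$ since $\alpha\le\varepsilon^{-1}$, placing the function in the sublinear class, while $\beta=1$ gives exactly the affine function $k+\alpha$. Next I would record $\dot f_{\alpha,\beta}(k)=\bigl(\beta(k+\alpha)^{\beta-1},\,(k+\alpha)^\beta\log(k+\alpha)\bigr)^T$ together with the three second derivatives and read off the bounds \eqref{eqn-max-partial}--\eqref{eqn-partial-quotient}: the dominant contribution to $\|\dot f_\theta(k)\|+\|\ddot f_\theta(k)\|$ is $(k+\alpha)^\beta\log^2(k+\alpha)\le Ck\log^2 k$, and the dominant contribution to the quotient bound is $\log^2(k+\alpha)\le C\log^2 k$, so both hold with $\gamma=2$. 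Finally, since $(\alpha_0,\beta_0)$ lies in the interior $(-1+\varepsilon,\varepsilon^{-1})\times(0,1)$, and in particular $\beta_0>0$, identifiability holds: if $(k+\alpha)^\beta=c(k+\alpha_0)^{\beta_0}$ for all $k$, then taking logarithms and matching the $\log k$ and $1/k$ terms as $k\to\infty$ forces $\beta=\beta_0$ and then $\alpha=\alpha_0$. Theorem~\ref{thm-param-clt} then yields $\sqrt n(\hat\theta_n-\theta_0)\rightsquigarrow N(0,V_0^{-1})$.

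Part (b) is where Lemma~\ref{lemma-limit-degree-dist-equality} enters and is the only genuinely delicate point. The matrix $V_0$ of \eqref{eqn:V0} is weighted by the tail masses $\qkzero$, whereas the Hessian \eqref{eqn-hessian-iota-alpha-beta} is written through the quantities $a,\dots,f$ weighted by $\pkzero$; the martingale central limit theorem naturally delivers the covariance in terms of the tail sums, and one must recognize it as the (negative) Hessian of the limiting log-likelihood computed against $\pkzero$. The bridge is the identity $\qkzero=f_0(k)\pkzero/\malt_0$ with $\malt_0=\sum_j f_0(j)\pkzero=a$ from Lemma~\ref{lemma-limit-degree-dist-equality}, writing $f_0:=f_{\alpha_0,\beta_0}$ as in Section~\ref{sec:ex-alpha-beta}. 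Substituting $\qkzero=f_0(k)\pkzero/a$ into \eqref{eqn:V0} turns the first sum into $a^{-1}$ times the matrix with entries $b,d,d,f$ and the outer product into $a^{-2}$ times the matrix with entries $c^2,ce,ce,e^2$, so that
\[
V_0=\frac{1}{a^2}\left(\begin{smallmatrix}ab-c^2 & ad-ce\\[2pt] ad-ce & af-e^2\end{smallmatrix}\right)=-\ddot\iota(f_{\alpha_0,\beta_0})=V_{\alpha_0,\beta_0}.
\]
Invertibility of $V_0$ is already guaranteed by the negative-definiteness of the Hessian established in Section~\ref{sec:ex-alpha-beta} (the Cauchy--Schwarz inequalities giving $ab-c^2>0$, $af-e^2>0$, and a positive determinant), so $V_0^{-1}$ is well defined and the stated convergence follows. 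Everything outside this reweighting step is routine differentiation and the already-established concavity estimates.
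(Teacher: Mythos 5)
Your proposal is correct and follows the same route the paper intends: the corollary is a direct specialization of Theorem~\ref{thm-param-clt}, and the identification $V_0=-\ddot\iota(f_{\theta_0})$ via the identity $p^{\sss(0)}_{>k}=f_{\theta_0}(k)p^{\sss(0)}_k/\sum_j f_{\theta_0}(j)p^{\sss(0)}_j$ of Lemma~\ref{lemma-limit-degree-dist-equality} is exactly the step the paper records at the end of the proof of Theorem~\ref{thm-param-clt}, combined with the explicit Hessian and its negative definiteness from Section~\ref{sec:ex-alpha-beta}. Your verification of the derivative bounds \eqref{eqn-max-partial}--\eqref{eqn-partial-quotient} with $\gamma=2$ and of identifiability on the interior of $\Theta_\varepsilon$ supplies the bookkeeping the paper leaves implicit, and the reweighting computation giving $(V_0)_{ij}=(ab-c^2,\,ad-ce,\,af-e^2)/a^2$ checks out.
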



To test whether the \pa function is affine we may compare an estimator $\hat\b_n$ for $\beta$ in the
model considered in Corollary~\ref{cor:k-alpha-beta} to the value $\b_0=1$ in the affine case. The
Wald-type test statistic admits the form
\begin{equation*}
	T_n := \frac{{n}^{1/2}(\hat{\beta}_n  -1) }{(V_{\hat{\alpha}, 1}^{-1})_{2,2}^{1/2}},
\end{equation*}
where $V_{\hat\alpha, 1}$ is obtained by plugging in $(\hat{\alpha}_n,1)$ in the definition of
$V_{\alpha, \beta}$ in \eqref{eqn-hessian-iota-alpha-beta} and  $(V_{\hat{\alpha}, 1}^{-1})_{2,2}$ is the $(2,2)$-element of $V_{\hat{\alpha}, 1}^{-1}$.

\begin{theorem}
	For the parametric family $f_{\alpha,\beta}(k) = (k + \alpha)^\beta$ with true parameter
	$(\alpha_0,1)$ in the parameter set $\Theta_\varepsilon = (-1 + \varepsilon, \varepsilon^{-1}) \times [0, 1]$
	for some small $\varepsilon>0$, the sequence $T_n$ tends in distribution to $Z\bbind_{Z\le 0}$, for $Z$
	a standard normal variable.
	\label{cor:affine-test}
\end{theorem}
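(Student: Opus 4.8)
The plan is to treat this as a \emph{boundary-parameter} problem: the true value $\beta_0=1$ sits on the boundary of the admissible set $[0,1]$, so Theorem~\ref{thm-param-clt} does not apply directly and the limit is a truncated, not a full, normal. I would work in the local parametrization $h=\sqrt n\,(\theta-\theta_0)$ around $\theta_0=(\alpha_0,1)$, under which the constraint $\beta\le 1$ becomes $h_2\le 0$, where $h_2$ denotes the coordinate of $h$ corresponding to $\beta$. The goal is to show that the relevant coordinate $\hat h_2=\sqrt n\,(\hat\beta_n-1)$ converges to a half-normal limit and then to normalize.

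First I would establish a local asymptotic quadratic expansion of the rescaled log-likelihood at $\theta_0$. The inputs are exactly those used in the proof of Theorem~\ref{thm-param-clt}, none of which needs interiority: the score $\Delta_n:=n^{-1/2}\dot\ell_n(f_{\theta_0})$ is a martingale whose normalized quadratic variation converges, by Corollary~\ref{lemma-hk-pk-conv}, to the matrix $V_0=V_{\alpha_0,1}$ of \eqref{eqn:V0} (equivalently minus the Hessian in \eqref{eqn-hessian-iota-alpha-beta}), so the martingale central limit theorem gives $\Delta_n\rightsquigarrow N(0,V_0)$; and by the same moment convergence, together with $\ddot\iota_n(f_{\theta_0})\to-V_0$ almost surely, the map $h\mapsto n\bigl[\iota_n(f_{\theta_0+h/\sqrt n})-\iota_n(f_{\theta_0})\bigr]$ converges uniformly on compacta to $h\mapsto h^{T}\Delta-\tfrac12 h^{T}V_0 h$ with $\Delta\sim N(0,V_0)$. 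One checks that the growth conditions \eqref{eqn-max-partial}--\eqref{eqn-partial-quotient} hold for $f_{\alpha,\beta}=(k+\alpha)^\beta$ at $\beta=1$ (using $\dot f/f=(1/(k+\alpha),\log(k+\alpha))$ and the analogous Hessian bounds), that the entries of $V_0$ are finite because $\log^\gamma(k+\alpha)$ is integrable against the power-law tail $\pkzero$ via Corollary~\ref{lemma-hk-pk-conv} with $r$ slightly above $1<2+\alpha$, and that $V_0$ is positive definite by the Cauchy--Schwarz computations of Section~\ref{sec:ex-alpha-beta}, which remain valid at $\beta_0=1$.

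Next I would invoke the argmax continuous-mapping theorem. The constrained \mle $\hat\theta_n$ maximizes the log-likelihood over $\{\beta\le1\}$, so $\hat h_n=\sqrt n\,(\hat\theta_n-\theta_0)$ is an approximate argmax of the quadratic process over $\{h_2\le0\}$, with the tightness/localization supplied by consistency (Theorem~\ref{thm-consistency-param}). Hence
\[
\hat h_n\ \rightsquigarrow\ \hat h:=\argmax_{h_2\le 0}\Bigl(h^{T}\Delta-\tfrac12 h^{T}V_0 h\Bigr),
\]
and writing $Z:=V_0^{-1}\Delta\sim N(0,V_0^{-1})$, the maximizer $\hat h$ is the projection of $Z$ onto the half-space $\{h_2\le0\}$ in the $V_0$-inner product. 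Projecting onto this half-space leaves the second coordinate untouched when $Z_2\le0$ and sets it to $0$ when $Z_2>0$, so the only quantity entering the statistic is
\[
\sqrt n\,(\hat\beta_n-1)\ \rightsquigarrow\ Z_2\,\bbind_{\{Z_2\le 0\}},\qquad Z_2\sim N\bigl(0,(V_0^{-1})_{2,2}\bigr).
\]

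Finally I would normalize. By consistency of $\hat\alpha_n$ and continuity, $(V_{\hat\alpha,1}^{-1})_{2,2}\to (V_0^{-1})_{2,2}$ almost surely, so Slutsky's lemma gives
\[
T_n=\frac{\sqrt n\,(\hat\beta_n-1)}{(V_{\hat\alpha,1}^{-1})_{2,2}^{1/2}}\ \rightsquigarrow\ \frac{Z_2}{(V_0^{-1})_{2,2}^{1/2}}\,\bbind_{\{Z_2\le0\}}=Z\,\bbind_{\{Z\le0\}},
\]
with $Z$ standard normal, which is the asserted limit. I expect the main obstacle to be the boundary analysis itself: one must justify the uniform quadratic approximation and the tightness needed to localize the constrained argmax, thereby replacing the interior stationarity argument of Theorem~\ref{thm-param-clt} by a constrained-optimization/projection argument in the spirit of estimation on the boundary, and one must confirm finiteness and positive definiteness of $V_0$ in the affine limit despite the borderline moment condition $r<2+\alpha$.
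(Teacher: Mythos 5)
Your proposal is correct and follows essentially the same route as the paper: a local quadratic expansion of the log-likelihood at the boundary point, tightness of $\sqrt n(\hat\theta_n-\theta_0)$ from consistency and the negative definiteness of the limiting Hessian, the argmax continuous mapping theorem over the half-space $\{h_2\le 0\}$, and the identification of the second coordinate of the constrained maximizer as the $V_0$-projection of $Z=V_0^{-1}\Delta$, yielding $Z_2\bbind_{\{Z_2\le 0\}}$ exactly as in the paper's Lemma~\ref{LemmaArgmaxDist}. The final Slutsky normalization also matches the paper's argument.
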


\section{A Remedy to the History Problem}
\label{sec:remedy-history}
A practical problem with the \mle is that the log likelihood function \eqref{eqn:iota-nn} and its derivative \eqref{eqn:iota-n}
depend on the history of the network evolution.  In many real-world applications
observing the entire history is impossible or too costly, and only the final snapshot at time $n$ is
available. For instance, when building a social network model, we may observe the final network,
but recovering how it exactly evolved into its current shape is difficult---we would need to check with everyone when
s/he became friends with everyone else and establish a strict time order.

This problem is solvable. The log likelihood \eqref{eqn:iota-nn} or its derivative \eqref{eqn:iota-n} consist
of two terms, and the history problem only arises in the second term, which results from the norming constant to
the likelihood. The first terms in \eqref{eqn:iota-nn} or \eqref{eqn:iota-n}
depend on the network only through $P_{>k}(n)$ and hence are available from
the final snapshot. The second terms
are the C\'esaro averages of $\log S_{f_\q}(t-1)$ and its derivative $S_{\dot f_\q}(t-1)/S_{f_\q}(t-1)$, respectively.
Because $S_h(t)/t$ tends to the limit $\sum_{k=1}^\infty h(k)p_k$ almost surely, as $t\rightarrow\infty$, these
C\'esaro averages are  asymptotically actually very close to $\log S_{f_\q}(n)/n$ and  $S_{\dot f_\q}(n)/S_{f_\q}(n)$,
which do depend only on the snapshot of the network at time $n$. The remedy is to
replace \eqref{eqn:iota-nn} or \eqref{eqn:iota-n} by
\begin{align}
	\tilde \iota_n(f_\theta) & = \sum_{k=1}^\infty \log f_\theta(k) P_{>k}(n) -  \log S_{f_\q}(n),
	\label{eqn:tilde-iota}
	\\
	\dot{\tilde{\iota}}_n(f_{\theta})
	                         & = \sum_{k=1}^\infty \frac{\dot{f}_\theta}{f_\theta}(k) P_{>k}(n) - \frac{S_{\dot f_\q}(n)}{S_{f_\q}(n)}. 
	\label{eqn:tilde-iota-dot}
\end{align}
Define a \qmle $\tilde\q_n$ as the maximizer of the first function or a zero of the second.
Inspection of the proof of Theorem~\ref{thm-consistency-param} readily shows that this \qmle is consistent
under the same conditions as the \mle.

\begin{theorem}
	Under the conditions of Theorem~\ref{thm-consistency-param},
	the \qmle $\tilde{\theta}_n$ is consistent, i.e., as $n \rightarrow \infty$,
	$\tilde{\theta}_n \rightarrow \theta_0$ almost surely, under $\theta_0$.
	\label{thm:qmle-consistency}
\end{theorem}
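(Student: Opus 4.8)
The plan is to recycle the consistency argument behind Theorem~\ref{thm-consistency-param} essentially verbatim, after checking that replacing the \cesaro norming term by its single-snapshot analogue perturbs the criterion only by a quantity that is asymptotically free of $\theta$. Indeed, the \qmle criteria \eqref{eqn:tilde-iota} and \eqref{eqn:tilde-iota-dot} differ from \eqref{eqn:iota-nn} and \eqref{eqn:iota-n} only in their second (norming) terms: the average $\frac1n\sum_{t=2}^n\log S_{f_\theta}(t-1)$ is replaced by $\log S_{f_\theta}(n)$, and $\frac1n\sum_{t=2}^n S_{\dot f_\theta}(t-1)/S_{f_\theta}(t-1)$ by $S_{\dot f_\theta}(n)/S_{f_\theta}(n)$. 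Since $S_h(t)=t\sumk h(k)P_k(t)$, writing $m(\theta):=\sumk f_\theta(k)\pkzero$ for the limiting total preference, the logarithmic norming term splits off a $\theta$-free divergent part that is annihilated once the M-criterion is recentered at $\theta_0$, which is exactly the recentering already used to control $\iota_n$ in the \mle proof.

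For the M-estimator view I would therefore study $\tilde\iota_n(f_\theta)-\tilde\iota_n(f_{\theta_0})$. Its first term equals $\sumk[\log f_\theta(k)-\log f_{\theta_0}(k)]P_{>k}(n)$, identical to that of the \mle, while its second term is
\[
\log S_{f_\theta}(n)-\log S_{f_{\theta_0}}(n)=\log\frac{\sumk f_\theta(k)P_k(n)}{\sumk f_{\theta_0}(k)P_k(n)}\asconv\log\frac{m(\theta)}{m(\theta_0)},
\]
by Corollary~\ref{lemma-hk-pk-conv} applied to numerator and denominator, the $\log n$ factors having cancelled. For the \mle the analogous term is the \cesaro average of $\log\bigl(\sumk f_\theta(k)P_k(t-1)\bigr)-\log\bigl(\sumk f_{\theta_0}(k)P_k(t-1)\bigr)$, whose summand tends to the same limit $\log(m(\theta)/m(\theta_0))$, so its \cesaro mean tends there too (the average of a convergent sequence shares its limit). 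Hence $\tilde\iota_n(f_\theta)-\tilde\iota_n(f_{\theta_0})$ and $\iota_n(f_\theta)-\iota_n(f_{\theta_0})$ converge, uniformly over the compact $\Theta$, to one and the same limit criterion; since the \mle proof shows this limit is maximised only at $\theta_0$ under the identifiability assumption, the Wald argmax argument delivers $\tilde\theta_n\asconv\theta_0$.

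The Z-estimator route, which covers the variant where $\tilde\theta_n$ is taken to be a zero of \eqref{eqn:tilde-iota-dot} and applies under the additional smoothness conditions of Proposition~\ref{prop-uniform-conv-likelihood}, is even shorter: it suffices to prove $\dot{\tilde\iota}_n(f_\theta)-\dot\iota_n(f_\theta)\ra0$ uniformly in $\theta$, since the two differ only by the single bracket $\frac1n\sum_{t=2}^n S_{\dot f_\theta}(t-1)/S_{f_\theta}(t-1)-S_{\dot f_\theta}(n)/S_{f_\theta}(n)$. Because $S_{\dot f_\theta}(t)/S_{f_\theta}(t)=\bigl(\sumk\dot f_\theta(k)P_k(t)\bigr)/\bigl(\sumk f_\theta(k)P_k(t)\bigr)$ converges a.s.\ to $\bigl(\sumk\dot f_\theta(k)\pkzero\bigr)/m(\theta)$ (Corollary~\ref{lemma-hk-pk-conv}, applied componentwise), both its terminal value at $n$ and its \cesaro average share this limit, so their difference vanishes; together with the uniform convergence $\dot\iota_n\ra\dot\iota$ of Proposition~\ref{prop-uniform-conv-likelihood} this gives $\dot{\tilde\iota}_n\ra\dot\iota$ uniformly and hence consistency.

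The hard part is not the algebra but the uniformity: each invocation of Corollary~\ref{lemma-hk-pk-conv} must hold uniformly over the compact $\Theta$, the tails of the series $\sumk$ must be controlled via $f_\theta(k)\le Ck^\beta$ (or the affine bound) together with the light tails of $\pkzero$ so that $\sumk\log f_\theta(k)\,\pkzero<\infty$, and the denominators $\sumk f_\theta(k)P_k(\cdot)$ must be kept bounded away from $0$ uniformly in $\theta$ so that the logarithms are well behaved. These are precisely the estimates already carried out for Theorem~\ref{thm-consistency-param}, which is why inspection of that proof suffices here.
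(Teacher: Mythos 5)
Your proposal is correct and follows essentially the same route as the paper, which gives no separate argument but simply remarks that inspection of the proof of Theorem~\ref{thm-consistency-param} suffices: there the uniform almost-sure convergence $\sup_\theta\bigl|\log\bigl(S_{f_\theta}(n)/n\bigr)-\log\bigl(\sum_k f_\theta(k)p_k^{\sss(0)}\bigr)\bigr|\to0$ is established as an intermediate step \emph{before} the Cesàro lemma is invoked, so the snapshot criterion (shifted by the $\theta$-free $\log n$, or equivalently recentered at $\theta_0$ as you do) converges uniformly to the same limit $\iota(f_\theta)$ and the same Wald argmax argument applies. Your Z-estimator variant, with the caveat about the extra smoothness conditions of Proposition~\ref{prop-uniform-conv-likelihood}, is likewise consistent with the paper's treatment.
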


We have no proof of the asymptotic normality of the \qmle in the same generality as
for the maximum likelihood estimator, but we note the following general theorem, and its corollary.

Let $F_\q(k)=\bigl(\sum_{j=1}^{k-1} \dot f_\q/f_\q(j),\dot f_\q(k),f_\q(k)\bigr)^T$.

\begin{theorem}
	\label{TheoremANQMLE}
	Assume the conditions of Theorem~\ref{thm-consistency-param}, and in addition
	assume that the sequence of random vectors $\sum_{k=1}^\infty F_{\q_0}(k)\sqrt n(P_k(n)-p_k^{\sss (0)})$
	is asymptotically normal with mean zero and covariance $W_0$ and that $\q_0$ is interior to $\Theta$. Then the sequence of pseudo maximum
	likelihood estimators $\tilde\theta_n$ satisfies
	$\sqrt n (\tilde\theta_n-\q_0)\weak N(0, V_0^{-1}VV_0^{-1})$, under $\theta_0$ as $n \rightarrow \infty$,
	where  $V$ is given in the proof below and $V_0$ is given in \eqref{eqn:V0}.
\end{theorem}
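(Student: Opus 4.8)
The plan is to treat $\tilde\theta_n$ as a Z-estimator solving the estimating equation $\dot{\tilde\iota}_n(f_\theta)=0$ of \eqref{eqn:tilde-iota-dot}, and to run the classical linearization argument (as in Theorem~5.21 of \cite{vdvaart2000asymptotic}). Since $\theta_0$ is interior to $\Theta$ and $\theta\mapsto f_\theta(k)$ is twice continuously differentiable, a one-term Taylor expansion of $0=\dot{\tilde\iota}_n(f_{\tilde\theta_n})$ about $\theta_0$ gives, for some $\bar\theta_n$ on the segment between $\tilde\theta_n$ and $\theta_0$,
\begin{equation*}
	\sqrt n\,(\tilde\theta_n-\theta_0)=-\bigl[\ddot{\tilde\iota}_n(f_{\bar\theta_n})\bigr]^{-1}\sqrt n\,\dot{\tilde\iota}_n(f_{\theta_0}).
\end{equation*}
It then suffices to prove that $\ddot{\tilde\iota}_n(f_{\bar\theta_n})\asconv -V_0$ with $V_0$ invertible, and that $\sqrt n\,\dot{\tilde\iota}_n(f_{\theta_0})\weak N(0,V)$ for a matrix $V$ to be identified, after which Slutsky's lemma yields $\sqrt n\,(\tilde\theta_n-\theta_0)\weak N(0,V_0^{-1}VV_0^{-1})$.

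For the Hessian I would argue exactly as in the proof of Theorem~\ref{thm-param-clt}. Differentiating \eqref{eqn:tilde-iota-dot} twice produces finitely many sums of the type $\sumk h_\theta(k)P_k(n)$ and $\sumk h_\theta(k)P_{>k}(n)$, whose weights $h_\theta$ are controlled by the growth bounds \eqref{eqn-max-partial}--\eqref{eqn-partial-quotient}, and Corollary~\ref{lemma-hk-pk-conv} then gives almost sure convergence to the corresponding $\sumk h_\theta(k)\pkzero$ and $\sumk h_\theta(k)\qkzero$, uniformly over $\theta$. The only difference from the \mle is that the history-free form uses the single-snapshot ratio $S_{\dot f_\theta}(n)/S_{f_\theta}(n)=\sumk\dot f_\theta(k)P_k(n)/\sumk f_\theta(k)P_k(n)$ in place of a Cesàro average, but both converge to the same limit, so the limiting Hessian is unchanged. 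Combined with $\bar\theta_n\asconv\theta_0$ (from the assumed consistency) this yields $\ddot{\tilde\iota}_n(f_{\bar\theta_n})\asconv\ddot\iota(f_{\theta_0})$; a direct computation using the identity $\qkzero=f_{\theta_0}(k)\pkzero/\malt$ of Lemma~\ref{lemma-limit-degree-dist-equality} shows $\ddot\iota(f_{\theta_0})=-V_0$ for $V_0$ as in \eqref{eqn:V0}, and $V_0$ is positive definite (hence invertible) by the Cauchy--Schwarz argument already used in Section~\ref{sec:ex-alpha-beta}.

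The heart of the argument is the score CLT, which is where the extra hypothesis enters. Since $\dot\iota(f_{\theta_0})=0$ by the remark after \eqref{eqn:iota-dot}, I would write $\sqrt n\,\dot{\tilde\iota}_n(f_{\theta_0})=\sqrt n\,\bigl(\dot{\tilde\iota}_n(f_{\theta_0})-\dot\iota(f_{\theta_0})\bigr)$ and linearize the two terms of \eqref{eqn:tilde-iota-dot} in the fluctuations $P_k(n)-\pkzero$. Interchanging the order of summation makes the first term genuinely linear,
\begin{equation*}
	\sumk\frac{\dot f_{\theta_0}}{f_{\theta_0}}(k)P_{>k}(n)=\sumk\Bigl(\sum_{j=1}^{k-1}\frac{\dot f_{\theta_0}}{f_{\theta_0}}(j)\Bigr)P_k(n),
\end{equation*}
so its fluctuation carries the weight equal to the first block of $F_{\theta_0}(k)$. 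For the ratio term, writing $A_n=\sumk\dot f_{\theta_0}(k)P_k(n)$, $B_n=\sumk f_{\theta_0}(k)P_k(n)$ and their a.s.\ limits $A=\sumk\dot f_{\theta_0}(k)\pkzero$, $B=\malt$, a first-order delta expansion gives
\begin{equation*}
	\frac{A_n}{B_n}-\frac{A}{\malt}=\frac1{\malt}\sumk\dot f_{\theta_0}(k)\bigl(P_k(n)-\pkzero\bigr)-\frac{A}{\malt^2}\sumk f_{\theta_0}(k)\bigl(P_k(n)-\pkzero\bigr)+R_n,
\end{equation*}
whose leading weights are the second and third blocks of $F_{\theta_0}(k)$. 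Collecting the three blocks, there is a fixed $d\times(2d+1)$ matrix $M=[\,I_d\mid-\malt^{-1}I_d\mid\malt^{-2}A\,]$ with
\begin{equation*}
	\sqrt n\,\dot{\tilde\iota}_n(f_{\theta_0})=M\sumk F_{\theta_0}(k)\,\sqrt n\,\bigl(P_k(n)-\pkzero\bigr)+\sqrt n\,R_n,
\end{equation*}
so the assumed asymptotic normality of $\sumk F_{\theta_0}(k)\sqrt n(P_k(n)-\pkzero)$ transfers through the linear map $M$ to give $\sqrt n\,\dot{\tilde\iota}_n(f_{\theta_0})\weak N(0,V)$ with $V=MW_0M^T$, which is the matrix announced in the statement.

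The main obstacle is the negligibility of $\sqrt n\,R_n$ together with the control of the infinite sums. Because $A_n\asconv A$ and $B_n\asconv\malt>0$ by Corollary~\ref{lemma-hk-pk-conv}, the delta-method remainder $R_n$ is quadratic in $(A_n-A,B_n-\malt)$; since $\sqrt n(A_n-A)$ and $\sqrt n(B_n-\malt)$ are subvectors of the assumed asymptotically normal (hence $O_P(1)$) array, one gets $\sqrt n\,R_n=O_P(1)\,o_P(1)=o_P(1)$. The subtler point is that every sum over $k$ runs to infinity, so I must justify the interchange of summation and linearization and show that the tail contributions to both $\sqrt n\,\dot{\tilde\iota}_n(f_{\theta_0})$ and to $M\sumk F_{\theta_0}(k)\sqrt n(P_k(n)-\pkzero)$ are asymptotically negligible; this is exactly where the growth bounds \eqref{eqn-max-partial}--\eqref{eqn-partial-quotient} and the finiteness of $W_0$ (the implicit square-integrability of the weights $F_{\theta_0}$ against the degree fluctuations) do the work, and I expect this truncation bookkeeping to be the most delicate part of the argument.
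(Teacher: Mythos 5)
Your proposal is correct and follows essentially the same route as the paper: the score at $\theta_0$ is written as a smooth function $\phi(S_n)$ of $S_n=\sum_k F_{\theta_0}(k)P_k(n)$ (your matrix $M=[\,I_d\mid -\malt^{-1}I_d\mid \malt^{-2}A\,]$ is exactly the derivative $(I,-I/S^{\sss(0)}_3,S^{\sss(0)}_2/(S^{\sss(0)}_3)^2)$ used in the paper's delta-method step, so $V=MW_0M^T$ agrees), and the Hessian convergence and Z-estimator linearization are carried over from Theorem~\ref{thm-param-clt} just as in the paper. Your final worry about tail/truncation bookkeeping is already absorbed by the theorem's hypothesis, which postulates asymptotic normality of the finite-dimensional vector $\sqrt n(S_n-S^{\sss(0)})$ itself, so the delta method on $\bbR^{2d+1}$ requires no further interchange arguments.
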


It is plausible that the sequence of empirical degrees $\sqrt n (P_k(n)-p_k^{\sss (0)})$ is asymptotically normally distributed
in some generality,
but this has been established  only for the affine \pa function (see \cite{mori2002random}
and \cite{resnick2016asymptotic}, or Proposition~\ref{prop:asymp-normality-affine}). The preceding theorem requires that certain
linear combinations of the variables over $k$ are asymptotically normal, where the coefficients
typically tend to infinity with $k$ (e.g.\ at the order $k\log k$). Although this convergence requires
additional bounds for large $k$, the condition of the theorem seems plausible in general, for sublinear \pa models.

In the appendix we verify the condition for the interesting case of \pa functions
that are eventually constant, for which the limiting degree distribution
follows a power law with exponential cut-off (\citet{rudas2007random}). This leads to
the following corollary.

\begin{corollary}
	\label{thm:pmle-asymp-normal-trunc}
	In the model with \pa functions $f_\q$ satisfying $f_\q(k)=f_\q(k\wedge K)$, for some
	given $K\in\bbN_+$ for which the eigenvalue condition $\Re \lambda_2(A_K) < \lambda_1(A_K)/2$ (as in the
	appendix)  holds at $\theta_0$,
	the \qmle $\tilde{\theta}_n$ satisfies $\sqrt n(\tilde\q_n-\q_0)\weak N(0, V_0^{-1} V V_0^{-1})$,
	provided that it is consistent at $\q_0$.
\end{corollary}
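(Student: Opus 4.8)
The plan is to obtain the corollary as a special case of Theorem~\ref{TheoremANQMLE}. Since the eventually-constant functions $f_\theta(k)=f_\theta(k\wedge K)$ are bounded, hence trivially sublinear, the conditions of Theorem~\ref{thm-consistency-param} hold; consistency and interiority of $\theta_0$ are assumed. Thus the only hypothesis of Theorem~\ref{TheoremANQMLE} left to check is the asymptotic normality of
$$G_n:=\sum_{k=1}^\infty F_{\theta_0}(k)\,\sqrt n\bigl(P_k(n)-p_k^{\sss (0)}\bigr),\qquad F_\theta(k)=\Bigl(\textstyle\sum_{j=1}^{k-1}\frac{\dot f_\theta}{f_\theta}(j),\ \dot f_\theta(k),\ f_\theta(k)\Bigr)^T.$$
Once $G_n\weak N(0,W_0)$ is established, the matrix $V$ and the sandwich form $V_0^{-1}VV_0^{-1}$ follow verbatim from the proof of Theorem~\ref{TheoremANQMLE}, with $V_0$ as in \eqref{eqn:V0}.

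The first step is a reduction to finitely many coordinates that exploits the cut-off. For $k\ge K$ the blocks $\dot f_{\theta_0}(k)=\dot f_{\theta_0}(K)$ and $f_{\theta_0}(k)=f_{\theta_0}(K)$ are constant, whereas the first block is affine, $\sum_{j=1}^{k-1}\frac{\dot f_{\theta_0}}{f_{\theta_0}}(j)=\sum_{j=1}^{K-1}\frac{\dot f_{\theta_0}}{f_{\theta_0}}(j)+(k-K)\frac{\dot f_{\theta_0}}{f_{\theta_0}}(K)$. Summing by parts (using $\sum_k[\sum_{j<k}a_j]x_k=\sum_j a_j\sum_{l>j}x_l$ with $x_k=\sqrt n(P_k(n)-p_k^{\sss (0)})$), one sees that $G_n$ is a fixed linear combination of the finitely many centred proportions $\sqrt n(P_k(n)-p_k^{\sss (0)})$ with $k<K$, the centred tail mass $\sqrt n\bigl(N_{\ge K}(n)/n-\sum_{k\ge K}p_k^{\sss (0)}\bigr)$, and the single centred tail first moment $\sqrt n\sum_{k\ge K}(k-K)\bigl(P_k(n)-p_k^{\sss (0)}\bigr)$. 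It therefore suffices to prove joint asymptotic normality of this finite list of scalars.

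For the genuinely finite part I would invoke the generalized P\'olya urn representation of the cut-off model set up in the appendix, Section~\ref{sec:janson-truncated}: identifying all degrees $\ge K$ turns $(N_1(n),\dots,N_K(n))$ into an urn with mean replacement matrix $A_K$, and the hypothesis $\Re\lambda_2(A_K)<\lambda_1(A_K)/2$ is exactly the eigenvalue condition delivering a $\sqrt n$-rate Gaussian limit for the urn composition. This yields joint normality of $(\sqrt n(P_k(n)-p_k^{\sss (0)}))_{k<K}$ together with the tail mass. The new object is the tail first moment, whose coefficient $k-K$ is unbounded. To incorporate it I would either augment the urn by one scalar coordinate accumulating the first moment of the tail---an additive functional of the same chain, so that its fluctuations are governed by the same $A_K$ and the same eigenvalue condition---or, equivalently, argue by truncation: apply the finite-dimensional CLT to $\sqrt n\sum_{K\le k\le M}(k-K)(P_k(n)-p_k^{\sss (0)})$ and show the remainder $\sqrt n\sum_{k>M}(k-K)(P_k(n)-p_k^{\sss (0)})$ is negligible uniformly in $n$ as $M\to\infty$.

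This last point is the main obstacle. Because the coefficient grows linearly in $k$, negligibility of the tail requires a uniform-in-$n$ bound such as $\limsup_n\var\bigl(\sqrt n\sum_{k>M}(k-K)P_k(n)\bigr)\to0$ as $M\to\infty$, which does not follow from the almost-sure convergence of Corollary~\ref{lemma-hk-pk-conv} but must be extracted from quantitative second-moment estimates for the degree counts $N_k(n)$ at large $k$. Here the structure of the cut-off model is essential: beyond degree $K$ attachment proceeds at the constant rate $f_{\theta_0}(K)$, so the limiting law $(p_k^{\sss (0)})$ is a power law with exponential cut-off, with geometric tails and finite moments of all orders, which furnishes exactly the decay in $k$ needed to close the truncation. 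With the joint normality in hand, its covariance is $W_0$, and Theorem~\ref{TheoremANQMLE} then gives $\sqrt n(\tilde\theta_n-\theta_0)\weak N(0,V_0^{-1}VV_0^{-1})$, as claimed.
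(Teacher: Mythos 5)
Your overall route is the paper's route: reduce to Theorem~\ref{TheoremANQMLE}, and then reduce the infinite linear combination $\sum_k F_{\q_0}(k)\sqrt n(P_k(n)-p_k^{\sss (0)})$ to a continuous function of the finite-dimensional vector $\bigl(P_1(n),\dots,P_K(n),P_{>K}(n)\bigr)$, whose asymptotic normality is exactly what Proposition~\ref{prop:asymp-normality-trunc} supplies under the eigenvalue condition. You are in fact more careful than the paper's own two-line proof, which asserts that $F_{\q_0}(k)$ is constant for $k\ge K$; as you correctly observe, this holds for the blocks $\dot f_{\q_0}(k)$ and $f_{\q_0}(k)$ but fails for the first block $\sum_{j<k}(\dot f_{\q_0}/f_{\q_0})(j)$, which is affine in $k$ with slope $(\dot f_{\q_0}/f_{\q_0})(K)$. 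Identifying this is a genuine contribution.

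However, your treatment of the resulting tail first moment $\sqrt n\sum_{k>K}(k-K)\bigl(P_k(n)-p_k^{\sss (0)}\bigr)$ leaves a real gap: the augmented-urn variant is only gestured at, and the truncation variant requires a uniform-in-$n$ variance bound $\limsup_n\var\bigl(\sqrt n\sum_{k>M}(k-K)P_k(n)\bigr)\to0$ that you acknowledge you cannot extract from Corollary~\ref{lemma-hk-pk-conv}. Neither extra argument is needed. Since the tree at time $n$ has $n$ nodes, $n-1$ edges and one loose edge at the root, the degree sum is deterministic: $\sum_k kN_k(n)=2n-1$. Hence
\begin{equation*}
\sum_{k>K}(k-K)P_k(n)=\frac{2n-1}{n}-\sum_{k\le K}kP_k(n)-K\,P_{>K}(n),
\end{equation*}
so the tail first moment is an \emph{affine deterministic} functional of the very same finite vector $\bigl(P_1(n),\dots,P_K(n),P_{>K}(n)\bigr)$, up to a deterministic $O(1/n)$ term that is killed by the $\sqrt n$ scaling. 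The identical identity for $(p_k^{\sss (0)})$ (namely $\sum_k kp_k^{\sss (0)}=2$, which follows from the same bookkeeping in the limit) centres it. With this observation your decomposition collapses entirely onto the finite coordinates, the joint normality follows from Proposition~\ref{prop:asymp-normality-trunc} and the continuous mapping theorem, and the rest of your argument (identification of $W_0$, the delta-method step of Theorem~\ref{TheoremANQMLE}, and the sandwich $V_0^{-1}VV_0^{-1}$) goes through as written.
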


The limiting covariance in the preceding theorem and corollary may be complicated.
To get around this, we propose the following bootstrap procedure.
Suppose that we observe the final snapshot of the network $G_{n}$ with $n$ nodes.
\begin{enumerate}
	\item Obtain the \qmle $\tilde{\theta}_{n}$ based on $G_n$.
	\item Given $\tilde{\theta}_{n}$, simulate \pa networks $(G_{m}^{(i)})_{i=1}^s$ with \pa function $f_{\tilde{\theta}_{n}}$, each
	      with $m$ nodes.
	\item Obtain the \qmle $\tilde{\theta}_{m}^{(i)}$  based on $G_{m}^{(i)}$, for $i \in [s]$.
	\item Approximate the limit variance of $\sqrt{n} (\tilde{\theta}_{n} - \theta_0)$ by
	      \begin{equation}
		      \tilde{\Sigma}_{m, s}(f_{\tilde{\theta}_n}) : = {m} \biggl\{ \frac{1}{s} \sum_{i=1}^s \tilde{\theta}_m^{(i)} \bigl( \tilde{\theta}_m^{(i)} \bigr)^T - \Bigl( \frac{1}{s} \sum_{i=1}^s \tilde{\theta}_m^{(i)}  \Bigr)\Bigl( \frac{1}{s} \sum_{i=1}^s \tilde{\theta}_m^{(i)}  \Bigr)^T  \biggr\}.
		      \label{eqn:tilde-Sigma}
	      \end{equation}
\end{enumerate}
This procedure will be consistent under a mild continuity condition on the model $\q\mapsto f_\q$.

Section 7 of \cite{gao2017asymptotic} on the affine \pa model addressed a different history problem---the history of the
initial degrees. In the present paper  we study the case of fixed initial degree $1$,
but need to know how the incoming nodes connect, a problem that did not arise in the affine model.
The empirical estimator considered in \cite{gao2017consistent} is curiously free of the history problem.

\section{Connecting the \ee and the \qmle}
\label{sec:connect-ee-qmle}
Write $f(k)$ as $\theta_k$ and suppose that we are interested in estimating the
\emph{infinite-dimensional} vector $(\theta_k)_{k=1}^\infty$.
The pseudo log-likelihood function \eqref{eqn:tilde-iota} then takes the form
\[
	\tilde{\iota}_n(\theta) = \sumk (\log \theta_k) P_{>k}(n) - \log S_\theta(n),
\]
where
\(
S_\theta(t) = \sumk \theta_k N_k(t)
\) is the total preference.
Taking the derivative with respect to $\theta_k$, we obtain
\begin{equation*}
	\frac{\partial\tilde{\iota}_n(\theta)}{\partial \theta_k} = \frac{P_{>k}(n)}{ \theta_k} - \frac{N_k(n)}{ \sumj \theta_j N_j(n)}.
\end{equation*}
Setting this equation to zero for every $k$, we obtain the system of equations
(for simplicity assume that $N_k(n)>0$ for every $k$)
\begin{equation*}
	\frac{\theta_k}{ \sumj \theta_j P_j(n)} = \frac{N_{>k}(n)}{N_k(n)},\qquad k \in \bbN_+.
	\label{eqn:ee-qmle}
\end{equation*}
The right side is the aforementioned empirical estimator $\rkn$, defined in \cite{gao2017consistent}.
The \pa function $f$ or parameter $\q_k$ is identifiable up to a scale factor only. We conclude that
the \ee is the \qmle if we do not impose any parametric assumption
and wish to estimate $\theta_k$ individually for any $k$.

\section{Numerical Illustrations}
\label{sec:simu}
In this section we numerically study the performance of the \mle, the \qmle,  the empirical estimator,  the Wald test and
the bootstrap estimator. We
simulated data using the following three examples of a \pa function, each of the type $f(k)=(k+\a)^\b$:
\begin{alignat*}{5}
	 & f^{\sss (2)}(k) &  & = k^{2/3},       &  & \qquad &  & \a=0, &  & \ \b=2/3,  \\
	 & f^{\sss (4)}(k) &  & = (k + 4)^{4/5}, &  & \qquad &  & \a=4, &  & \ \b=4/5,  \\
	 & f^{\sss (5)}(k) &  & = k+2,           &  & \qquad &  & \a=2, &  & \ \beta=1.
\end{alignat*}
In every setting we conducted  $N = 1000$ repetitions of the experiment in which we simulated
a \pa tree of $n$ nodes, for varying $n$, and computed the estimators and/or test. For every estimator $\hat\theta_i$ of
$\theta=(\a,\b)$ we computed the \textit{sample mean difference} $(1/N) \sum_{i=1}^N (\hat\theta_i - \theta_0)$
and the rescaled \textit{sample covariance} matrix $(n/N)\sum_{i=1}^N (\hat\theta_i - \theta_0) (\hat\theta_i - \theta_0)^T$.
According to our theory, for large $n$ the first should be close to zero, and the second should
be close to the deterministic matrix $V_0^{-1}$, for $V_0$ given in \eqref{eqn:V0}, which depends on the \pa function.

\subsection{MLE}
\label{sec:simu-mle}
The limiting covariance matrix $V_0^{-1}$ of $\sqrt{n} (\hat\theta_n - \theta_0)$  under the \pa function
$f^{\sss (2)}$ is computed to be
\begin{equation*}
	\begin{pmatrix}
		169.30\  & 47.56 \\
		47.56    & 14.94
	\end{pmatrix}.
\end{equation*}
Table~\ref{tab:sample-mean-covariance-evolution} gives the results of our simulation experiments,
where we simulated \pa trees of three different sizes: $n= 10^4$, $n=10^5$ or $n=10^6$ nodes.
As expected the sample mean difference decreases with $n$, while the sample covariance swings around
the expected limit.

\begin{table}[htp]
	\centering
	\begin{tabular}{c c c}
		\toprule
		\# of nodes & Sample Mean Difference & Sample Covariance \\
		\midrule
		$10^4$      &
		\begin{tabular}{c c}
			7.29e-03 & 3.24e-05
		\end{tabular}
		            &
		\begin{tabular}{c c}
			173.19 & 48.14 \\
			48.14  & 15.08
		\end{tabular}
		\\
		\midrule
		$10^5$      &
		\begin{tabular}{c c}
			5.71e-04 & -7.03e-05
		\end{tabular}
		            &
		\begin{tabular}{c c}
			167.89 & 47.23 \\
			47.23  & 15.03
		\end{tabular}
		\\
		\midrule
		$10^6$      &
		\begin{tabular}{c c}
			1.71e-04 & -2.59e-05
		\end{tabular}
		            &
		\begin{tabular}{c c}
			163.02 & 46.60 \\
			46.60  & 14.97
		\end{tabular}                         \\
		\bottomrule
	\end{tabular}
	\caption{Sample mean difference and covariance of the maximum likelihood estimator of the parameters $\a$ and $\b$
		of the \pa function $f(k)=(k+\a)^\b$ for trees of  three sizes of node sets $n$ generated according
		to \pa function $f^{\sss (2)}$.}
	\label{tab:sample-mean-covariance-evolution}
\end{table}

We conducted the same experiment with \pa functions $f^{\sss (4)}$ and $f^{\sss (5)}$, but for trees of a single
size of $n=10^6$ nodes. The results presented in Table~\ref{tab:sample-mean-covariance-different}
again confirm the theory.

\begin{table}[htp]
	\centering
	\begin{tabular}{c c c c}
		\toprule
		\pa Function \                    & Sample Mean Difference & Sample Covariance & Limit Variance \\
		\midrule
		$f^{\sss (4)}$                    &
		\begin{tabular}{c c}
			-2.43e-02 & -2.72e-03
		\end{tabular}
		                                  &
		\begin{tabular}{c c}
			42764.75 & 4743.46 \\
			4743.46  & 540.71
		\end{tabular} &
		\begin{tabular}{c c}
			42429.33 & 4716.76 \\
			4716.76  & 539.75
		\end{tabular}
		\\
		\midrule
		$f^{\sss (5)}$                    &
		\begin{tabular}{c c}
			-5.35e-04 & -1.01e-04
		\end{tabular}
		                                  &
		\begin{tabular}{c c}
			1817.94 & 325.66 \\
			325.66  & 63.07
		\end{tabular}
		                                  &
		\begin{tabular}{c c}
			1762.05 & 316.58 \\
			316.58  & 61.64
		\end{tabular}                                                                \\
		\bottomrule
	\end{tabular}
	\caption{Sample mean difference and covariance of the maximum likelihood estimator of the parameters $\a$ and $\b$
	of the \pa function $f(k)=(k+\a)^\b$  for trees of $n=10^6$ nodes generated
	according to the \pa functions $f^{\sss (4)}$ and $f^{\sss (5)}$. The last column gives
	the covariance matrix $V_0^{-1}$.}
	\label{tab:sample-mean-covariance-different}
\end{table}

\subsection{Comparing the estimators}
We compared the performance of the empirical estimator \eqref{eqn:ee-equation}, the \qmle and the \mle on samples of trees of $n=10^6$ nodes generated using the \pa function $f^{\sss (2)}$.
Table~\ref{tab:sample-mean-covariance-f2-ee-pmle} gives numerical results, while Figure~\ref{fig:qqplot-all}
shows QQ-plots of the estimators. The first line of Table~\ref{tab:sample-mean-covariance-f2-ee-pmle} repeats
the relevant (third) line of Table~\ref{tab:sample-mean-covariance-evolution}.

The comparison is particularly interesting as the theoretical variances of the empirical estimator and the \qmle are unknown.
In our experiment the sample covariance of the \qmle is only twice bigger than that of the \mle. In contrast,
the sample covariance of the empirical estimator is larger than that of the other two estimators by an order of magnitude.
We conclude that it helps to use a parametric model, if this can be correctly specified.

As is evident from Figure~\ref{fig:qqplot-all}, the \mle and \qmle are asymptotically normal. The same seems
to be true for the empirical estimator, with the largest visible possible deviation in the left tail of its distribution.

\begin{table}[htp]
	\centering
	\begin{tabular}{c c c}
		\toprule
		Estimator & Sample Mean Difference & Sample Covariance \\
		\midrule
		MLE       &
		\begin{tabular}{c c}
			1.71e-04 & -2.59e-05
		\end{tabular}
		          &
		\begin{tabular}{c c}
			163.02 & 46.60 \\
			46.60  & 14.97
		\end{tabular}                       \\
		\midrule
		EE        &
		\begin{tabular}{c c}
			1.02e-03 & 2.58e-04
		\end{tabular}
		          &
		\begin{tabular}{c c}
			6840.42 & 2965.18 \\
			2965.18 & 1297.15
		\end{tabular}                       \\
		\midrule
		PMLE      &
		\begin{tabular}{c c}
			-1.05e-03 & -3.79e-04
		\end{tabular}
		          &
		\begin{tabular}{c c}
			297.11 & 85.40 \\
			85.40  & 26.20
		\end{tabular}                       \\
		\bottomrule
	\end{tabular}
	\caption{Comparison of the \mle, the empirical estimator and the \qmle for estimating the parameters $\a$ and $\b$
		of the \pa function $f(k)=(k+\a)^\b$  based on  trees with $n=10^6$ nodes generated
		according to \pa function $f^{\sss (2)}$.}
	\label{tab:sample-mean-covariance-f2-ee-pmle}
\end{table}

\begin{figure}[htp]
	\centering
	\begin{subfigure}[b]{0.48\textwidth}
		\includegraphics[width=\textwidth]{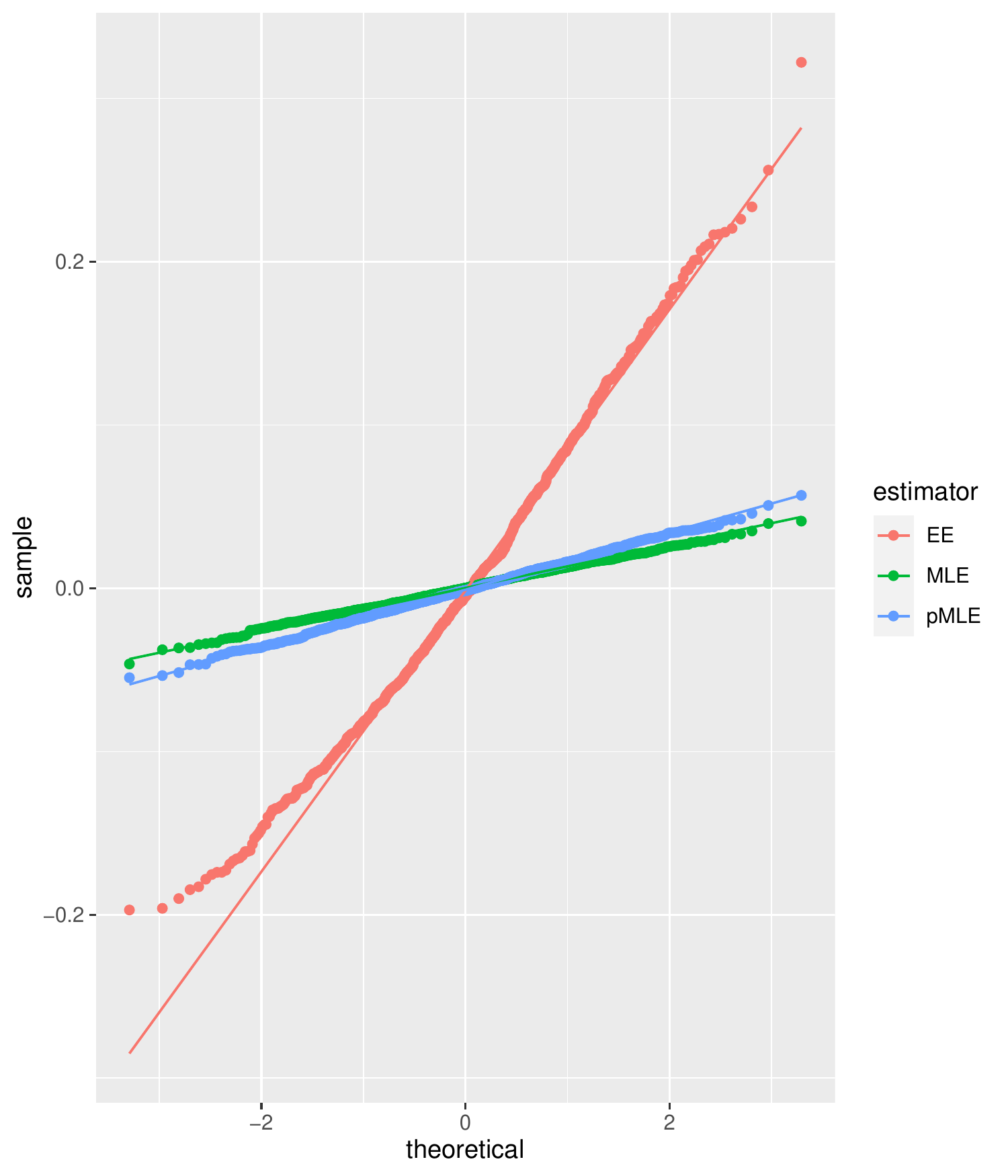}
		\caption{Estimators of $\alpha$}
		\label{fig:alpha}
	\end{subfigure}
	~ 
	\begin{subfigure}[b]{0.48\textwidth}
		\includegraphics[width=\textwidth]{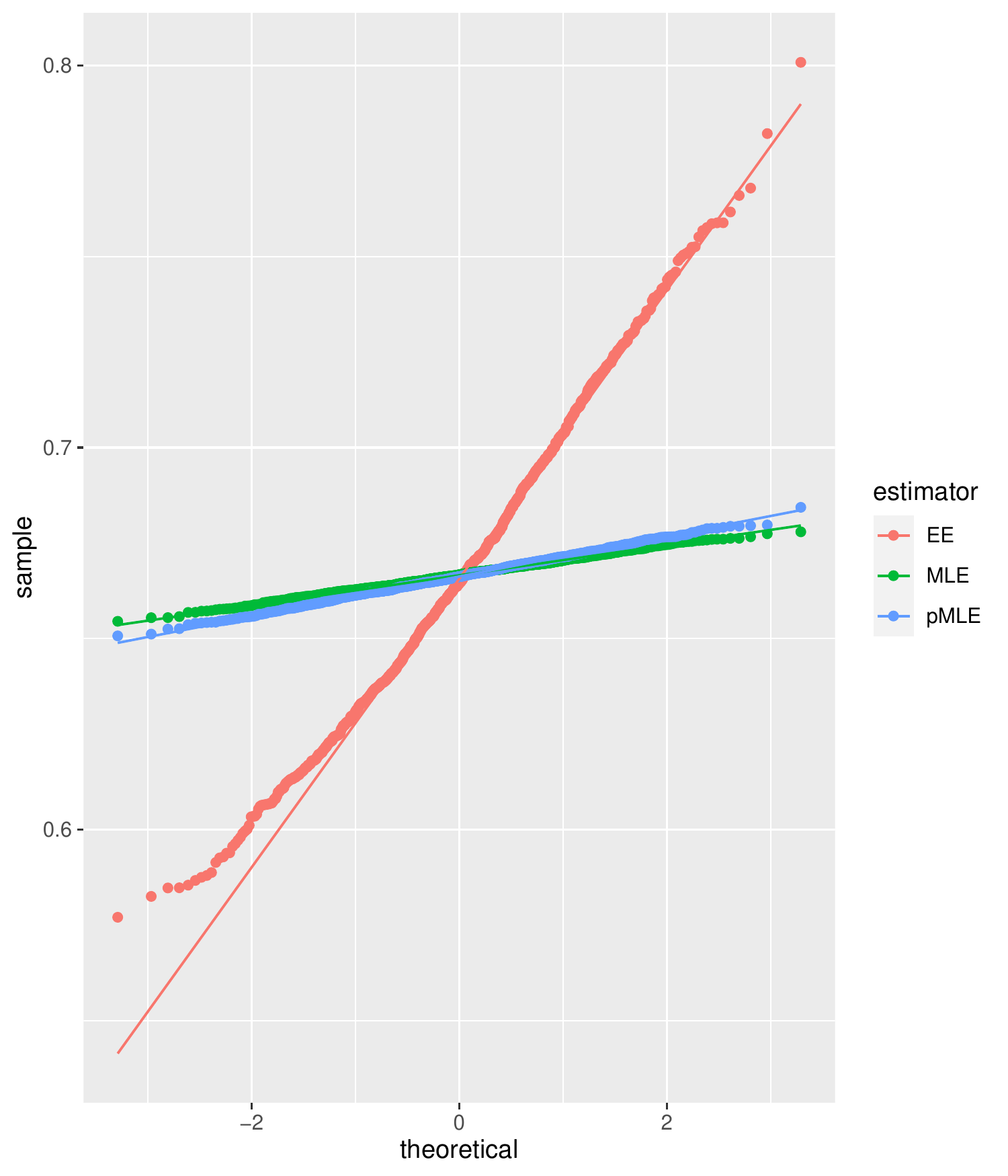}
		\caption{Estimators of $\beta$}
		\label{fig:beta}
	\end{subfigure}
	\caption{\it QQ-plots of 1000 realizations of the \mle, the empirical estimator and the \qmle of the parameters $\a$ and $\b$
		of the \pa function $f(k)=(k+\a)^\b$  based on trees with $n=10^6$ nodes generated according
		to \pa function $f^{\sss (2)}$.}
	\label{fig:qqplot-all}
\end{figure}

\subsection{The Wald-type test for affinity}
\label{sec:wald-test-simu}
We applied the Wald-type test for affinity given in Corollary~\ref{cor:affine-test} on \pa trees generated using
$f^{\sss (4)}$ and $f^{\sss (5)}$.  The nominal size of the tests was set at 0.05. In both cases we
registered the proportion of repetitions in which the null hypothesis was rejected.
As $f^{\sss (5)}$ is affine, rejection constitutes a type-I error in this case, while
for $f^{\sss (4)}$ the proportion of rejections is a measure of the test's power.
The results are summarized in Table~\ref{tab:wald-test}.
The expected proportion of type-I errors for $f^{\sss (5)}$ was 5.2\%, close to the nominal value.
The power of the test at $f^{\sss (4)}$ was outright 1, resulting from the fact that the number of nodes was large
and $f^{\sss (4)}$ is far from affine.

\begin{table}[htp]
	\centering
	\begin{tabular}{c c c}
		\toprule
		\pa function \  & Type-I error & Power \\
		\midrule
		$f^{\sss (4)}$  & -            & 1     \\
		$f^{\sss (5)}$  & .052         & -     \\
		\bottomrule
	\end{tabular}
	\caption{Proportion of rejections of the Wald-type affinity test
		with nominal size  0.05 based on \pa trees  with $n=10^6$ nodes for two \pa functions.}
	\label{tab:wald-test}
\end{table}

\subsection{The \qmle with bootstrapped variance}
\label{sec:bootstrapped-test-simu}
We conducted two experiments to illuminate the bootstrap procedure
for the \qmle in Section~\ref{sec:remedy-history}, applied to the model
$f(k)=(k+\a)^\b$.   In both cases the  data was generated under the \pa function $f^{\sss (2)}$, corresponding to $\a=0$ and $\b=2/3$.
Furthermore, the bootstrap sample size in \eqref{eqn:tilde-Sigma} was set
to $m= 10^5$ and the number of bootstrap replicates to $s = 10^3$.

In the first experiment we performed Wald type tests for the hypotheses $H_0: \a=0$ and $H_0:\b=2/3$ using
the relevant coordinate of the \qmle $(\tilde\a_n,\tilde\b_n)$ with its variance estimated by the bootstrap estimator \eqref{eqn:tilde-Sigma}.
In the case of the hypothesis $H_0: \a=0$ this entails the test statistic
\begin{equation*}
	T_{n,m,s} : = \frac{n^{1/2} \, \tilde\alpha_n }{ \bigl( \tilde\Sigma_{m,s} ( f_{\alpha_0, \tilde\beta_n}) _{1,1}\bigr)^{1/2}}.
\end{equation*}
The test for  $H_0: \b=2/3$ is similarly based on a standardized version of $\tilde\b_n$.
We rejected the null hypothesis when $|T_{n,m,s}| > z_{0.025}$, corresponding to the working hypothesis that $T_{n,m,s} \sim N(0,1)$ under the null hypothesis and nominal size $0.05$.
Table~\ref{tab:wald-test-t} gives the proportions of rejections in $N =1000$ repetitions of the experiment.

\begin{table}[htp]
	\centering
	\begin{tabular}{c c}
		\toprule
		$H_0$         & Type-I error \\ 
		\midrule
		$\alpha = 0 $ & 0.047        \\
		$\beta = 2/3$ & 0.063        \\
		\bottomrule
	\end{tabular}
	\caption{Proportions of rejections of the size-0.05 Wald-type tests with bootstrapped variance
		of the null hypotheses $H_0: \a=0$ and $H_0: \b=2/3$ on the \pa function $f(k)=(k+\a)^\b$  for trees with $n=10^6$ nodes
		generated according to \pa function $f^{\sss (2)}$.
		The bootstrap sample size was set to $m= 10^5$ and the number of bootstrap replicates to $s = 10^3$.}
	\label{tab:wald-test-t}
\end{table}

In the second experiment we simulated $N=1000$ replicates of the normalized and projected \qmle
\[
	d_n := r_n^T \sqrt{n} \bigl( \tilde{\Sigma}_{m,s}(f_{\tilde{\alpha}_n,\tilde{\beta}_n} )\bigr)^{-1/2}  \binom{\tilde{\alpha}_n}{\tilde{\beta}_n},
\]
where $r_n$ are i.i.d.\ random vectors from the unit circle. Figure~\ref{fig:bootstrap-qqplot} shows a QQ-plot of these
1000 values against the standard normal distribution.

\begin{figure}[htp]
	\centering
	\includegraphics[width=0.7\textwidth]{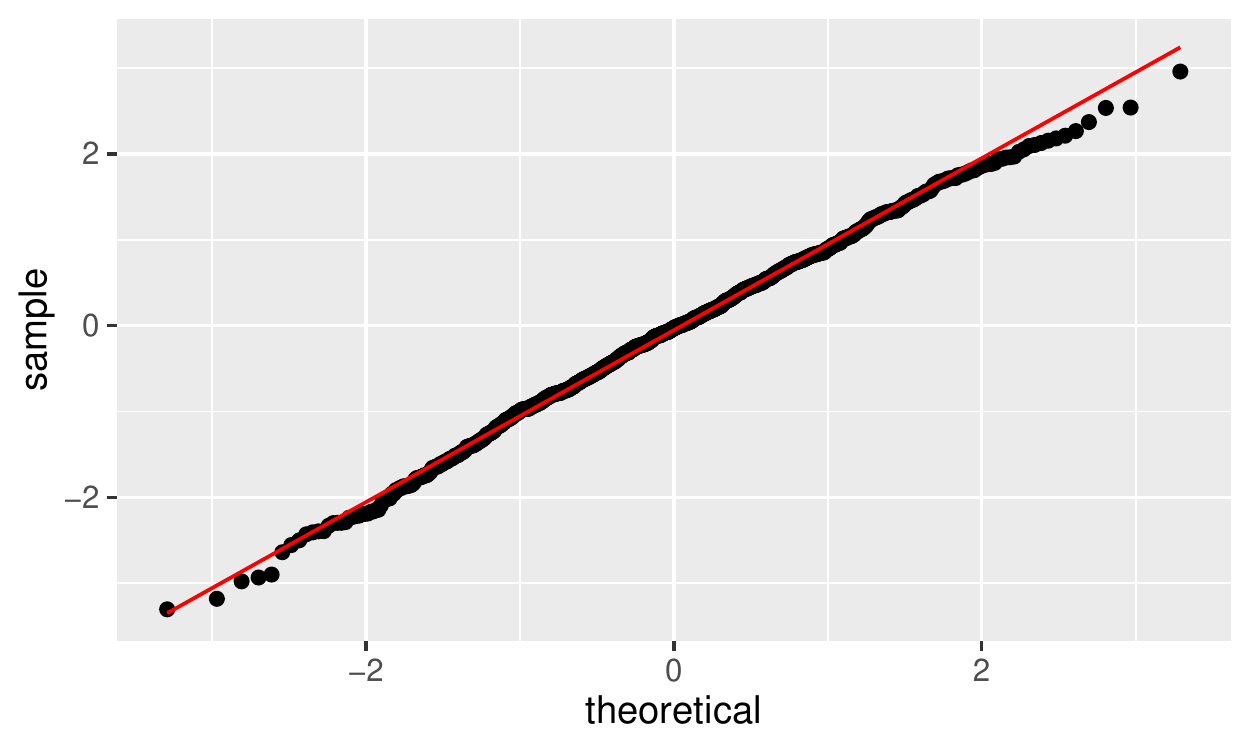}
	\caption{\it QQ-plot of 1000 realizations of the randomly projected pseudo maximum likelihood estimators
		in the model $f(k)=(k+\a)^\b$ normalized by their bootstrapped variances
		based on trees with $n=10^6$ nodes generated according to $f^{\sss (2)}$.
		The bootstrap sample size was set to $m= 10^5$ and the number of bootstrap replicates to $s = 10^3$.}
	\label{fig:bootstrap-qqplot}
\end{figure}

Both experiments support the practical use of asymptotic normal limit theory for the \qmle and validate
the bootstrap procedure for estimating the asymptotic variance.

\section{Proofs of main results}
\label{sec:proofs}

\begin{proof}[Proof of Proposition~\ref{lemma-conv-phi1-phi2}]
	We apply the general results due to \cite{jagers1975branching} and \cite{nerman1981convergence}, as summarized in
	the appendix, Section~\ref{sec:rooted-tree}.

	The events of the pure birth process \eqref{eqn-birth-process} can be represented as $T_1<T_1+T_2<T_1+T_2+T_3<\cdots$,
	for independent exponential random variables $(T_k)_{k=1}^\infty$ with rates $(f(k))_{k=1}^\infty$.
	The total number of births $\xi(t)=\int \bbind_{(0,t]}(u)\,\xi(du)$ at time $t$ is equal to $\sum_{l=1}^\infty \bbind_{(0,t]}(T_1+\cdots+T_l)$,
	which tends to infinity almost surely as $t\ra\infty$ by the assumption that the \pa function is bounded from below by $f(1)>0$.
	The birth times are clearly not restricted to any lattice, and the functions $t\mapsto \bbE[\varphi_i(t)]$ are continuous almost
	everywhere in view of their monotonicity. Thus, it suffices to show that the Malthusian parameter exists and to verify
	conditions \eqref{eqn:V-beta} and \eqref{eqn:beta-xi}.

	Since $\int \mathrm{e}^{-\lambda u}\,\xi(du)=\sum_{l=1}^\infty \mathrm{e}^{-\lambda(T_1+\cdots+T_l)}$,
	\begin{align*}
		\bbE\Bigl[ \int_0^\infty \mathrm{e}^{-\lambda u}\,\xi(du)\Bigr]   & =\bbE \Bigl[\sum_{l=1}^\infty \mathrm{e}^{-\lambda(T_1+\cdots+T_l)}\Bigr]
		=\sum_{l=1}^\infty \prod_{i=1}^l \frac{f(i)}{\lambda+f(i)},                                                                                                                                                              \\
		\bbE\Bigl[ \int_0^\infty \mathrm{e}^{-\lambda u} \xi(du) \Bigr]^2 & = \bbE \Bigl[ \sum_{l=1}^\infty \mathrm{e}^{- \lambda (T_1 + \dots + T_l)} \Bigr]^2
		= \sum_{k=1}^\infty \sum_{l= 1}^\infty \bbE\bigl[ \mathrm{e}^{-\lambda(T_1 +\cdots + T_k) - \lambda (T_1 + \cdots + T_l)}\bigr]                                                                                          \\
		                                                                  & =  \rho_f(2\lambda)+2 \sum_{k=1}^\infty \sum_{l=k+1}^\infty\prod_{i=1}^k \frac{f(i)}{2\lambda + f(i)} \prod_{i=k+1}^l \frac{f(i)}{ \lambda + f(i)} .
	\end{align*}
	The left side of the first formula is $\int_0^\infty \mathrm{e}^{-\lambda u}\,\mu(du)=\rho_f(\lambda)$, and hence this
	formula verifies equation \eqref{laplace-transform}.

	First consider the strictly sublinear case, where  $f(k) \le C k^{\beta}$. Because both expressions in the display are monotone
	in $f$, we obtain upper bounds on these expressions by evaluating their right sides for  $f(k) = C k^{\beta}$. Because $\log(1+x)\ge x/2$ for
	$x\in [0,1]$, we have $f(i)/(\lambda+f(i))\le \exp(-\lambda/(2f(i)))$, for $i\ge I_\lambda:= (\lambda/C)^{1/\beta}$.
	Since the quotients and their products are also bounded by 1,
	$$\sum_{l=1}^\infty \prod_{i=1}^l \frac{f(i)}{\lambda+f(i)}\le I_\lambda+\sum_{l>I_\lambda}e ^{-\sum_{I_\lambda<i\le l} \lambda/(2C i^{\beta})}.$$
	Because $\beta<1$, the sum in the exponent is of the order $l^{1-\beta}$ as $l\rightarrow \infty$, and hence the series
	is finite for every $\lambda>0$. The function $\rho_f$ is monotone decreasing and tends to zero as $\lambda\rightarrow\infty$,
	by the dominated convergence theorem, and to infinity as $\lambda\downarrow 0$, by the monotone convergence theorem.
	It follows that the Malthusian parameter $\malt$ exists and is contained in $(0,\infty)$. We also
	conclude that \eqref{eqn:V-beta} is satisfied, for any $\lambda\in (0,\malt)$.

	The second moment of $\int_0^\infty \mathrm{e}^{-\lambda u}\,\xi(du)$ is also finite for every $\lambda>0$. To see this, it suffices
	to bound the double sum on the right. By the same arguments, for $k>I_\lambda$,
	\begin{align*}
		\sum_{l=k+1}^\infty \prod_{i=k+1}^l \frac{f(i)}{\lambda + f(i)}
		 & \le \sum_{l= k + 1}^\infty \mathrm{e}^{-\sum_{i=k+1}^ l \lambda/(2C i^{\beta})}
		\le \sum_{l= k + 1}^\infty \mathrm{e}^{ - (\lambda/2C(1-\beta)) [l^{1-\beta} - (k+1)^{1-\beta}]} .
	\end{align*}
	This is bounded from above by a constant for every $k$ (in fact by a multiple of $k^{-\beta}$).
	Inserting this bound in the double sum, we are left with
	a single sum of the same type as before, except that $\lambda$ is replaced by $2\lambda$.
	We conclude that the second moment of $\int_{0}^\infty \mathrm{e}^{-\lambda u} \xi(du)$ is finite for every $\lambda>0$ as well.

	We have $\mathrm{e}^{-\lambda t} \xi(t)  = \int_{0}^t \mathrm{e}^{-\lambda t} \xi(du) \le \int_0^t \mathrm{e}^{-\lambda u} \,\xi(du)$, for every $t$ and $\lambda>0$. Combined
	with the assumption $\varphi_i(t)\le C\xi(t)^2$,
	we see that $\mathrm{e}^{-\lambda t}\varphi_i(t)\le C\bigl[\int_0^t \mathrm{e}^{-\lambda u/2}\,\xi(du)\bigr]^2$. It follows
	that $\bbE \sup_{t>0} \bigl(\mathrm{e}^{-\lambda t} \varphi_i(t)\bigr)\le C\bbE \bigl[\int_0^\infty \mathrm{e}^{-\lambda u/2}\,\xi(du)\bigr]^2<\infty$, for every $\lambda>0$,
	thus verifying \eqref{eqn:beta-xi}.

	In the case that $f(k)=k+\alpha$, for $\alpha > -1$, the function $\rho_f$ takes the form
	$\rho_f(\lambda) = (1+\alpha)/(\lambda-1)$ (see \cite{rudas2007random})  and hence the Malthusian parameter is $\malt = 2+ \alpha$ and
	the function $\rho_f$ is finite for $\lambda>1$. We conclude that \eqref{eqn:V-beta} is satisfied.
	We show below that the second moment of $\int_{0}^\infty \mathrm{e}^{-\lambda u} \xi(du)$ is finite for every $\lambda>1$ as well.
	If  $\varphi_i(t)\le C \xi(t)^r$, then $\sup_{t>0} \mathrm{e}^{-\lambda t}\varphi_i(t)\le C\bigl[\int_0^\infty \mathrm{e}^{-\lambda u/r}\,\xi(du)\bigr]^r$,
	which then has a finite moment for any $\lambda$ such that $\lambda/r>1$ for some $r\le 2$.
	Since $r$ can be chosen such that  $r\le 2$ and $r<2+\alpha$, where $2+\alpha>1$,  there exists $\l\in(1,\malt)$ that
	satisfies these restrictions, and hence \eqref{eqn:beta-xi} follows.

	For  $i\rightarrow\infty$, we have $f(i)/(\l+f(i))= \exp(-\log(1+\lambda/(i+\alpha))\asymp \exp(-\l/(i+\alpha))$. Hence,
	for $k\ra\infty$, and $\l>1$,
	\begin{align*}
		\sum_{l=k+1}^\infty \prod_{i=k+1}^l \frac{f(i)}{\lambda + f(i)}
		                                           & \asymp  \sum_{l=k+1}^\infty \mathrm{e}^{-\lambda \log\frac{l+\alpha}{ k + \alpha}}
		=\sum_{l=k+1}^\infty \left( \frac{k+\alpha}{l+\alpha} \right)^\lambda
		\asymp k,                                                                                                                       \\
		\prod_{i=1}^k \frac{f(i)}{2\lambda + f(i)} & \asymp \mathrm{e}^{-2\lambda\log (k+\a)}\asymp k^{-2\lambda}.
	\end{align*}
	Therefore, the double sum in the second moment of $\int_{0}^\infty \mathrm{e}^{-\lambda u} \xi(du)$ is bounded by
	a multiple of $\sum_{k=1}^\infty k^{-(2\lambda-1)}$, which is finite for $\lambda>1$.
\end{proof}

\begin{proof}[Proof of Corollary~\ref{lemma-hk-pk-conv}]
	The  random characteristics $\varphi_1(t) =h(\xi(t)+1) $  and $\varphi_2(t) = \bbind_{\{ t\ge 0\}}$ satisfy the conditions
	of Proposition~\ref{lemma-conv-phi1-phi2}, and hence $Z_t^{\varphi_1}/Z_t^{\varphi_2}$ converges almost surely to
	the limit given in the proposition, as $t\rightarrow\infty$. If
	$\tau_n$ is the first time that the \pa tree $\Upsilon_t$ possesses $n$ nodes, then $\tau_n\rightarrow\infty$ almost surely,
	as the total number of individuals at any given $t$ is finite, almost surely. (At any given (finite) time, every individual has a finite number of offspring
	and there can be at most finitely many generations, since a new generation can be formed not faster than an exponential waiting time with mean $1/f(1)$.)
	Hence, the sequence $Z_{\tau_n}^{\varphi_1}/Z_{\tau_n}^{\varphi_2}$ converges to the same limit, as $n\rightarrow\infty$.

	The process $Z_t^{\varphi_2}$ simply counts the number of nodes $v$ in the tree at time $t$, and hence $Z_{\tau_n}^{\varphi_2}=n$.
	Since $\xi_v(t)+1$ is the degree of node $v$ at time $t$, the quotient can be rewritten as
	\begin{equation*}
		\frac{ Z^{\varphi_1}_{\tau_n}}{ Z^{\varphi_2}_{\tau_n}} = \frac1{ n}{ \sum_{v\in \Upsilon_{\tau_n}} h\bigl(\deg(v, \Upsilon_{\tau_n})\bigr)}
		= \frac1{ n}{ \sumk h(k) N_k(n)}= \sumk h(k) P_k(n).
	\end{equation*}
	This is the left side of the corollary, and it remains to identify  the limit given in Proposition~\ref{lemma-conv-phi1-phi2}
	as its right side. The latter is equal to
	\begin{align*}
		\frac{\int_0^\infty \mathrm{e}^{-\malt t}\bbE h(\xi(t)+1)\,dt}{\int_0^\infty \mathrm{e}^{-\malt t}\bbE \bbind_{\{t\ge0\}}\,dt}
		=\frac{\sumk h(k) \malt \int_{0}^\infty \mathrm{e}^{-\malt t} \bbP(\xi(t)+1=k)\,dt}{ \malt \int_{0}^\infty \mathrm{e}^{-\malt t} \,dt}.
	\end{align*}
	The denominator is simply $1$ and the numerator is $\sumk h(k) p_k$ by identifying $p_k$ from Equation (17)
	in \cite{gao2017consistent}.
\end{proof}

\begin{proof}[Proof of Theorem~\ref{thm-consistency-param}]
	Recall the notation $\iota_n(f_\theta)$ for the scaled log likelihood in  \eqref{eqn:iota-nn}.
	We  show below that $\sup_\q\bigl|\iota_n(f_\theta)-\iota(f_\theta)-c_n\bigr|\ra 0$, almost surely, for
	$c_n=n^{-1}\sum_{t=2}^n\log (t-1)$ and
	$$\iota(f_\theta)=\sum_{k=1}^\infty\bigl(\log f_\q(k)\bigr)\, p_{>k}^{\sss (0)}-\log \Bigl(\sum_{k=1}^\infty f_\q(k)p_k^{\sss (0)}\Bigr).$$
	Because $\hat\q_n$ maximizes $\q\mapsto \iota_n(f_\theta)$, it next suffices to show that the limit function
	$\q\mapsto \iota(f_\theta)$ possesses $\q_0$ as a well-separated point of maximum, in the sense of
	\cite{vdvaart2000asymptotic}, Theorem~5.7. Since the function $\q\mapsto \iota(f_\theta)$ is continuous
	on the compact set $\Theta$, this is equivalent to showing that  $\q_0$ is a unique point of maximum.

	Consider the probability distributions $q^\q=(q_k^\q)_{k=1}^\infty$ on $\bbN_+$ defined by
	\begin{equation}
		\label{DefinitionQtheta}
		q_k^\q=\frac1{c(\q)} \frac{f_\q(k)}{f_{\q_0}(k)}p_{>k}^{\sss (0)},
	\end{equation}
	where the norming constant satisfies
	$$c(\q)=\sum_k \frac{f_\q(k)}{f_{\q_0}(k)}p_{>k}^{\sss (0)}= \frac{\sum_kf_\q(k)p_k^{\sss (0)}}{\sum_kf_{\q_0}(k)p_k^{\sss (0)}},$$
	by \eqref{eqn-lemma-limit-degree-dist-equality}. In particular $c(\q_0)=1$ and hence $q^{\q_0}_k=p_{>k}^{\sss (0)}$.
	The Kullback--Leibler divergence of $q^\q$ relative to $q^{\q_0}$  can be seen to be
	equal to $\iota(f_{\q_0})-\iota(f_\theta)$, and is strictly positive unless $q^\q=q^{\q_0}$. The latter is equivalent
	to $f_\q \propto f_{\q_0}$, which is excluded by the identifiability assumption.

	We finish by proving the uniform convergence, where we show that the two components of $\iota_n(f_\q)$
	converge to the two components of $\iota(f_\q)$. By Fubini's theorem the difference of the first components satisfies
	\begin{align*}
		\Bigl|\sum_{k=1}^\infty \log f_\q(k)(P_{>k}(n)-p_{>k}^{\sss (0)})\Bigr|
		 & =\Bigl|\sum_{j=1}^\infty\sum_{k=1}^{j-1}\log f_\q(k)(P_{j}(n)-p_{j}^{\sss (0)})\Bigr|        \\
		 & \le \sum_{j=1}^\infty\Bigl|\sum_{k=1}^{j-1}\log f_\q(k)\Bigr|\, |p_{j}^{\sss (0)}-P_{j}(n)|.
	\end{align*}
	Because $\log f_\q(k)\le C \log k$ and $|u|=u+2u_-$, for every $u\in\bbR$, this is bounded
	above by a multiple of
	$$\sum_{j=1}^\infty j\,\log j(p_{j}^{\sss (0)}-P_{j}(n))+2\sum_{j=1}^\infty j\log j\,(p_{j}^{\sss (0)}-P_{j}(n))_-.$$
	The first series on the left tends to zero almost surely by Corollary~\ref{lemma-hk-pk-conv}  applied with the function $h: j \mapsto j\log j$.
	The terms of the second series are bounded from above by $j\log j \,p_j^{\sss (0)}$ and tend to zero almost surely for every $j$ as $n\to \infty$
	by Corollary~\ref{lemma-hk-pk-conv}  applied with $h: k \mapsto \bbind_{\{k\}}$. Furthermore, the series $\sum_jj\log j \,p_j^{\sss (0)}$  converges.
	Hence, the second series
	tends to zero almost surely by the dominated convergence theorem. The two series give an upper bound
	independent of $\q$ and hence the supremum over $\q$ of the left side of the second last equation tends to zero almost surely.

	For the second component of $\iota_n(f_\q)$, we first note that for some constant $C$ not depending on $\theta$,
	\begin{align*}
		\Bigl|\frac{S_{f_\q}(n)}{n}-\sum_{k=1}^\infty f_\q(k)p_k^{\sss (0)}\Bigr| & =\Bigl|\sum_{k=1}^\infty f_\q(k)(P_k(n)-p_k^{\sss (0)})\Bigr|
		\le C \sum_{k=1}^\infty k\,|p_k^{\sss (0)}-P_k(n)|.
	\end{align*}
	The right side tends to zero almost surely as $n\rightarrow \infty$,
	as in preceding paragraph. It follows that the supremum over $\q$ of the left
	side tends to zero almost surely. The limit $\sum_kf_\q(k)p_k^{\sss (0)}$ is a continuous, positive function
	and hence is bounded away from 0. By the continuous mapping theorem,
	$$\sup_\q \Bigl|\log \frac{S_{f_\q}(n)}{n}-\log\Bigl(\sum_{k=1}^\infty f_\q(k)p_k^{\sss (0)}\Bigr)\Bigr|\asconv 0.$$
	By Lemma~\ref{lem:cesaro-uniform-l1} the C\'esaro's sums $n^{-1}\sum_{t=2}^n\log (S_{f_\q}(t-1)/(t-1))
		=n^{-1}\sum_{t=2}^n\log S_{f_\q}(t-1)-c_n$ have the same limit.
\end{proof}

\begin{lemma}[Uniform Cesàro convergence for processes]
	\label{lem:cesaro-uniform-l1}
	If $Z_\q(1),Z_\q(2),\ldots$ are bounded stochastic processes such that $\sup_\q| Z_\theta(n) - Z_\theta|\rightarrow 0$
	almost surely, for some process $Z_\q$, then the Cesàro means $\bar{Z}_\theta(n)=n^{-1}\sum_{t=1}^nZ_\q(t)$ of $Z_\theta(n)$ satisfy
	$\sup_\q| \bar Z_\theta(n) - Z_\theta|\rightarrow 0$, almost surely.
\end{lemma}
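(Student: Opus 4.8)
The plan is to reduce the uniform Cesàro statement to the classical deterministic fact that the averages of a null sequence are again null, applied pathwise, after controlling the supremum over $\theta$ by a single numerical sequence. First I would introduce the majorant
\[
	g_n := \sup_\theta |Z_\theta(n) - Z_\theta|.
\]
Since the processes $Z_\theta(\cdot)$ and the limit $Z_\theta$ are bounded by hypothesis, this supremum is finite for every $n$, and the assumption of the lemma is precisely that $g_n \to 0$ almost surely.

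Next I would peel off the parameter. Writing $\bar Z_\theta(n) - Z_\theta = n^{-1}\sum_{t=1}^n \bigl(Z_\theta(t) - Z_\theta\bigr)$ and using the triangle inequality together with the elementary observation that the supremum of an average is at most the average of the suprema, I obtain, uniformly in $\theta$,
\[
	\sup_\theta \bigl| \bar Z_\theta(n) - Z_\theta \bigr|
	\le \frac{1}{n} \sum_{t=1}^n \sup_\theta \bigl| Z_\theta(t) - Z_\theta \bigr|
	= \frac{1}{n} \sum_{t=1}^n g_t.
\]
This bound no longer depends on $\theta$, so it suffices to show that its right-hand side tends to zero almost surely.

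Finally I would fix an outcome $\omega$ in the almost-sure event on which $g_n(\omega) \to 0$ and apply, along that single sample path, the standard fact that if a real sequence converges (here to $0$), then so do its Cesàro means, to the same limit. Hence $n^{-1}\sum_{t=1}^n g_t(\omega) \to 0$, and combining with the displayed inequality gives $\sup_\theta |\bar Z_\theta(n) - Z_\theta| \to 0$ on this event, i.e.\ almost surely. I expect no genuine obstacle here: the argument is routine, and the only points requiring a word of care are the finiteness and measurability of $g_n$ (both guaranteed by the boundedness assumption) and the interchange of supremum and average, which is the harmless direction of the triangle inequality.
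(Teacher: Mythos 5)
Your proof is correct and is essentially the paper's argument in a slightly cleaner packaging: you first pass the supremum inside the average to reduce to the scalar null sequence $g_n=\sup_\theta|Z_\theta(n)-Z_\theta|$ and then invoke the classical Ces\`aro lemma, whereas the paper performs the head/tail split $\frac mn\max_{t\le m}|Z_\theta(t)-Z_\theta|+\frac1n\sum_{t>m}|Z_\theta(t)-Z_\theta|$ directly and notes it survives taking $\sup_\theta$. The two are the same mechanism, so no substantive difference.
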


\begin{proof}[Proof of Lemma~\ref{lem:cesaro-uniform-l1}]
	For every $m$ the difference  $|\bar{Z}_\theta(n)-Z_\q|$ is bounded from above by
	$$\frac mn \max_{1\le t\le m}| Z_\theta(t) - Z_\theta|+\frac 1n\sum_{t=m+1}^n| Z_\theta(t) - Z_\theta|.$$
	For every $\e>0$ there exists $m$ so that every term in the second sum is bounded from above by $\e$ and hence this sum divided
	by $n$  is bounded by $\e$.
	The first term tends to zero as $n\ra\infty$, for every fixed $m$. This argument is true also after taking the supremum over
	$\q$ across.
\end{proof}


\begin{proof}[Proof of Lemma~\ref{lemma-two-prob-vk}]
	Since \( \sum_{k=1}^\infty p_k = 1 = \sum_{k=1}^\infty q_k \), we have \( \sum_{k=1}^K (q_k - p_k) = \sum_{k=K+1}^\infty (p_k - q_k)\),
	where the terms of the sums are nonnegative by assumption.
	The strict monotonicity of $v_k$ gives $\sum_{k=1}^K (q_k - p_k) v_k < \sum_{k=K}^\infty (p_k - q_k) v_k$.
	Rearranging the terms gives the desired result.
\end{proof}

\begin{proof}[Proof of Lemma~\ref{lemma-reweighted-prob}]
	Since both sequences sum to $1$, it is impossible that $p_k > q_k$ for every $k \in \bbN_+$.
	If $(w_k)_{k=1}^\infty$ is strictly increasing and $p_j \le q_j$, then
	\begin{equation*}
		\frac{q_{j+1}}{ p_{j+1}} = \frac{w_{j+1}}{ \sum_{k=1}^\infty p_k w_k} > \frac{ w_j }{\sum_{k=1}^\infty p_k w_k} = \frac{q_j}{ p_j } \ge 1.
	\end{equation*}
	By mathematical induction, $p_k < q_k $ for every $k > j$.
	Then $p_k < q_k$ for any $k > K$ and $K+1$ the smallest value $j$ with $p_j<q_j$ (which cannot be $j=1$).

	In the case that $(w_k)_{k=1}^\infty$ is strictly decreasing, the sequence  $w^{-1}_k$ is strictly increasing,
	and we apply the preceding argument with $p_k=q_k w_k^{-1}/\sum_j q_jw_j^{-1}$ and the roles of $p_k$ and $q_k$ swapped.
\end{proof}

\begin{proof}[Proof of Lemma~\ref{lem:monotone-f-neq}]
	A more illustrative view of \eqref{eqn:iota-dot}
	is as follows:
	\begin{equation}
		\begin{aligned}
			\dot{\iota}(f_\theta) & = \sum_{k=1}^\infty \frac{\dot{f}_\theta}{f_\theta}(k) p_{>k}^{\sss (0)}
			- \sum_{k=1}^\infty \frac{ p_{>k}^{0} f_\theta(k) / f_{\theta_0}(k) }{\sum_{j=1}^\infty { p_{>j}^{\sss (0)}  f_\theta(j) / f_{\theta_0}(j)  }} \frac{\dot{f}_\theta}{f_\theta}(k) \\
			                      & = \sum_{k=1}^\infty p_{>k}^{\sss (0)}\frac{\dot{f}_\theta}{f_\theta}(k)  - \sum_{k=1}^\infty q_k^{\sss (0,\theta)} \frac{\dot{f}_\theta}{f_\theta}(k),
		\end{aligned}
		\label{eqn:dot-iota-diff-two-expectation}
	\end{equation}
	where $ q^{\sss (0,\theta)}_k \propto p_{>k}^{\sss (0)} f_\theta(k)/f_{\theta_0}(k)$ is the probability distribution generated by
	reweighting $(p_{>k}^{\sss (0)})_{k=1}^\infty$ with $(f_\theta/f_{\theta_0}(k))_{k=1}^\infty$.

	Fix any $\theta \in \Theta'$ and  assume that $\theta$ renders  $f_\theta/f_{\theta_0}$
	decreasing. By Lemma~\ref{lemma-reweighted-prob} applied with weights $w_k=f_\theta/f_{\theta_0}(k)$,
	there exists $K$ such that $p_{>k}^{\sss (0)} \le q^{\sss (0,\theta)}_k $ for $k \le K$ and $p_{>k}^{\sss (0)} > q^{\sss (0,\theta)}_k$ for $k> K$.
	Then  Lemma~\ref{lemma-two-prob-vk} with $v_k = \dot{f}_\theta/f_\theta(k)$ (understood component-wise)
	and the probability distributions $p_{>k}^{\sss (0)}$ and $q_k^{\sss (0,\theta)}$, shows that
	$\dot \iota(f_\theta) < 0$. If $f_\theta/f_{\theta_0} (k)$ is  increasing, then the same argument applies,
	but we find that $\dot \iota(f_\theta) > 0$.
\end{proof}

For reference, we state the martingale central limit theorem---a version of Theorem 3.2 of \cite{hall2014martingale}.

\begin{proposition}
	\label{prop:martingale-clt-simple}
	Suppose that $X_t$ is a martingale difference series relative to the filtration $\calF_t$.
	If as $n \rightarrow \infty$, $n^{-1} \sum_{t=1}^n \bbE[X_t^2|\calF_{t-1}] \xrightarrow{P} v$ for a positive constant $v$ and $n^{-1}\sum_{t=1}^n \bbE[ X^2_t \bbind_{\{|X_t| > \varepsilon\sqrt{n} \}}| \calF_{t-1}] \xrightarrow{P} 0$ for every $\varepsilon>0$, then $\sqrt{n} \bar{X}_n \rightsquigarrow N(0, v)$.
\end{proposition}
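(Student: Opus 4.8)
The plan is to recognise the two displayed hypotheses as, respectively, the conditional variance condition and the conditional Lindeberg condition of the standard triangular-array martingale central limit theorem, and to deduce the statement as a direct specialisation of \cite[Theorem~3.2]{hall2014martingale} (or its conditional-variance corollary). First I would form the martingale difference array $X_{nt}:=X_t/\sqrt n$, for $1\le t\le n$, adapted to the row $\sigma$-fields $\calF_{nt}:=\calF_t$. Then $\sum_{t=1}^n X_{nt}=\sqrt n\,\bar X_n$, and $\bbE[X_{nt}\mid \calF_{t-1}]=n^{-1/2}\bbE[X_t\mid\calF_{t-1}]=0$, so that $\{X_{nt}\}$ is a zero-mean, square-integrable martingale difference array whose row sums are exactly the quantity whose limit we seek.

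Next I would translate the hypotheses into the language of the cited theorem. The conditional variance of the array is $\sum_{t=1}^n\bbE[X_{nt}^2\mid\calF_{t-1}]=n^{-1}\sum_{t=1}^n\bbE[X_t^2\mid\calF_{t-1}]\xrightarrow{P} v$, which is precisely the first assumption; and for every $\varepsilon>0$ the event $\{|X_{nt}|>\varepsilon\}$ coincides with $\{|X_t|>\varepsilon\sqrt n\}$, so the conditional Lindeberg sum $\sum_{t=1}^n\bbE[X_{nt}^2\bbind_{\{|X_{nt}|>\varepsilon\}}\mid\calF_{t-1}]=n^{-1}\sum_{t=1}^n\bbE[X_t^2\bbind_{\{|X_t|>\varepsilon\sqrt n\}}\mid\calF_{t-1}]\xrightarrow{P}0$ is exactly the second assumption. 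The nesting requirement $\calF_{nt}\subseteq\calF_{n+1,t}$ of the cited theorem holds trivially, since the filtration $(\calF_t)$ does not depend on $n$. With these conditions in force the general theorem gives $\sum_{t=1}^n X_{nt}\rightsquigarrow Z$, where $Z$ has characteristic function $s\mapsto\bbE[\exp(-\tfrac12\eta^2 s^2)]$ and $\eta^2$ is the limit of the conditional variance. Here $\eta^2=v$ is a deterministic constant, so $Z\sim N(0,v)$, and hence $\sqrt n\,\bar X_n\rightsquigarrow N(0,v)$.

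The only place where care is needed is checking that these conditional-in-probability hypotheses supply the precise regularity the cited theorem requires, chiefly the asymptotic negligibility $\max_{t\le n}|X_{nt}|\xrightarrow{P}0$ of the array maximum and the attendant replacement of $\sum_t X_{nt}^2$ by its predictable compensator $\sum_t\bbE[X_{nt}^2\mid\calF_{t-1}]$. The negligibility is a standard consequence of the conditional Lindeberg condition, the guiding estimate being the elementary pointwise bound $\max_{t\le n}X_{nt}^2\le \varepsilon^2+\sum_{t=1}^n X_{nt}^2\bbind_{\{|X_{nt}|>\varepsilon\}}$, whose right-hand side has conditional mean tending to $\varepsilon^2$; letting $\varepsilon\downarrow0$ then forces the maximum to be asymptotically negligible. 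The compensator replacement is equally routine once the Lindeberg condition holds. These verifications are documented in the martingale-CLT literature, and because the limiting variance is the constant $v$ the stable convergence delivered by the general theorem collapses to ordinary weak convergence to $N(0,v)$, which completes the reduction.
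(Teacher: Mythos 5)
The paper gives no proof of this proposition at all: it is stated ``for reference'' as a version of Theorem~3.2 of \cite{hall2014martingale}, exactly the result you invoke. Your proposal is correct and simply fleshes out the routine reduction to that theorem (forming the array $X_{nt}=X_t/\sqrt n$, matching the conditional variance and conditional Lindeberg hypotheses, noting the nested $\sigma$-fields, and observing that a deterministic limit $v$ collapses stable convergence to ordinary weak convergence), so it is the same approach as the paper's.
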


\begin{proof}[Proof of Theorem~\ref{thm-param-clt}]
	By Theorem~\ref{thm-consistency-param} the  maximum likelihood estimator $\hat{\theta}_n$ is consistent
	if $\q_0$ is identifiable. The $\hat\q_n$ will eventually be interior to the parameter set if $\q_0$ is interior,
	and hence satisfy the system of likelihood equations $\dot{\iota}_n(f_{\hat\theta_n})=0$. Thus, the second assertion of the theorem
	follows from the first.

	By a Taylor expansion of the $i$th of the likelihood equations, it can be expanded as
	$0=\dot{\iota}_n(f_{\theta_0})_i+\ddot{\iota}_n(f_{\theta'_{n,i}})_i ( \hat \theta_n - \theta_0)$, for
	$\theta'_{n,i}$ on the line segment between $\hat\theta_n$ and $\theta_0$ and $\ddot{\iota}_n(f_{\theta})_i$ the
	$i$th row of the second derivative matrix $\ddot{\iota}_n(f_{\theta})$. Thus,
	$\ddot{\iota}_n(f_{\theta'_n}) ( \hat \theta_n - \theta_0)= -\dot{\iota}_n(f_{\theta_0})$, where
	$\ddot{\iota}_n(f_{\theta'_n})$ is understood to be the $(d\times d)$ matrix with $i$th row
	$\ddot{\iota}_n(f_{\theta'_{n,i}})_i$, even though the vector $\theta'_{n,i}$ may be different for different $i$.
	The proof can be concluded by showing that
	$\sqrt n \,\dot{\iota}_n(f_{\theta_0})\weak N(0, V_0)$ and that
	$\ddot{\iota}_n(f_{\theta'_n})\rightarrow -V_0$ in probability.

	As noted in Section~\ref{sec:mle}, the sequence $n\dot{\iota}_n(f_{\theta_0})$ is a (vector-valued) martingale. The
	asymptotic normality can be obtained from the martingale central limit theorem (see
	Proposition~\ref{prop:martingale-clt-simple}).
	Because $\bbP_\q(D_t=k\given\F_{t-1})=f_{\q}(k)N_k(t-1)/S_{f_\q}(t-1)$,
	the martingale differences $(\dot f_{\q_0}/f_{\q_0})(D_t)- \bbE_{\q_0}\bigl((\dot f_{\q_0}/f_{\q_0})(D_t)\given \F_{t-1}\bigr)$
	possess conditional covariances
	\begin{align*}\Sigma_t:= & \sumk \Bigl( \frac{\dot f_{\theta_0}}{f_{\theta_0}}\Bigr)\Bigl( \frac{\dot f_{\theta_0}}{f_{\theta_0}}\Bigr)^T(k)
              \frac{f_{\theta_0}(k) P_k(t-1)}{ S_{f_{\q_0}}(t-1)/(t-1)}                                                                                   \\
                         & \qquad- \Bigl( \sumk \frac{\dot f_{\theta_0}}{ f_{\theta_0}}(k)\frac{f_{\theta_0}(k) P_k(t-1)}{ S_{f_{\q_0}}(t-1)/(t-1)}\Bigr)
              \Bigl( \sumk \frac{\dot f_{\theta_0}}{ f_{\theta_0}}(k)\frac{f_{\theta_0}(k) P_k(t-1)}{ S_{f_{\q_0}}(t-1)/(t-1)}\Bigr)^T.
	\end{align*}
	As seen in the proof of Theorem~\ref{thm-consistency-param},  the sequence
	$S_{f_{\q_0}}(t)/t$ tends almost surely to $\sum_j f_{\q_0}(j)p_j^{\sss (0)}$, as $t\rightarrow\infty$.
	Corollary~\ref{lemma-hk-pk-conv} applied with $h$ equal to the entries of the matrix
	$\dot f_{\q_0}\dot f_{\q_0}^T/f_{\q_0}$ or the vector $\dot f_{\q_0}$ shows that the preceding display tends almost surely to
	$$\sumk \frac{\dot f_{\theta_0}\dot f_{\q_0}^T}{f_{\theta_0}}(k)\frac {p_k^{\sss (0)}}{\sum_j f_{\q_0}(j)p_j^{\sss (0)}}
		- \Bigl( \sumk \dot f_{\theta_0}(k)\frac {p_k^{\sss (0)}}{\sum_j f_{\q_0}(j)p_j^{\sss (0)}}\Bigr)
		\Bigl( \sumk \dot f_{\theta_0}(k)\frac {p_k^{\sss (0)}}{\sum_j f_{\q_0}(j)p_j^{\sss (0)}}\Bigr)^T.$$
	In view of equation \eqref{eqn-lemma-limit-degree-dist-equality}, this is equal to the matrix $V_0$.
	The averages $n^{-1}\sum_{t=2}^n\Sigma_t$ of the conditional covariances tend to the same limit, by Lemma~\ref{lem:cesaro-uniform-l1}.

	Because $D_t\le t$, we bound, using \eqref{eqn-partial-quotient},
	$$\Bigl\| \frac{\dot f_{\q_0}}{f_{\q_0}}(D_t)\Bigr\|\le C \log^\gamma t\le C\log ^\gamma n,\qquad t\le n.$$
	As this is smaller than $\epsilon\sqrt n$, eventually for every $\epsilon>0$,
	the conditional Lindeberg condition  is trivially satisfied.
	We conclude that $\sqrt n \,\dot{\iota}_n(f_{\theta_0})\weak N(0, V_0)$, by the martingale central limit theorem, for instance,
	Proposition~\ref{prop:martingale-clt-simple}.

	The Hessian matrix $\ddot{\iota}_n (\theta)$ takes the form
	\begin{align}
		\nonumber
		\ddot{\iota}_n (f_\theta)
		 & = \sumk \Bigl(\frac{\ddot f_\theta}{f_\theta} - \frac{\dot f_\theta \dot f_\theta^T}{ f_\theta^2}\Bigr) (k) P_{>k}(n)
		- \frac{1}{n} \sum_{t=2}^n \Bigl(\frac{S_{\ddot f_\theta}}{S_{f_\theta}}- \frac{S_{\dot f_\theta}S_{\dot f_\theta^T}}{S_{\dot f_\theta}^2}\Bigr)(t-1) \\
		 & = \sum_{j=1}^\infty\sum_{k=1}^{j-1}\Bigl(\frac{\ddot f_\theta}{f_\theta} - \frac{\dot f_\theta \dot f_\theta^T}{ f_\theta^2}\Bigr) (k) P_j(n)
		- \frac{1}{n} \sum_{t=2}^n \Bigl(\frac{S_{\ddot f_\theta}}{S_{f_\theta}}- \frac{S_{\dot f_\theta}S_{\dot f_\theta^T}}{S_{\dot f_\theta}^2}\Bigr)(t-1).
		\label{EqHessian}
	\end{align}
	Using Corollary~\ref{lemma-hk-pk-conv}, the first term in the second line can be shown
	to converge as $n\rightarrow \infty$ to the expression obtained by replacing $P_j(n)$ by $p_j^{\sss (0)}$,
	or equivalently replacing $P_{>k}(n)$ by $p_{>k}^{\sss (0)}$ in the first line.
	For the second term we first note that $S_f(t)/t$ tends almost surely to $\sum_j f(j)p_j^{\sss (0)}$, as $t\rightarrow\infty$,
	for $f$ equal to $\ddot f_{\q}$, $\dot f_\q$ or $f_\q$. By the continuous mapping theorem
	the terms of the sum converge to the corresponding limit.
	The second term then converges almost surely to the same limit, in view of Lemma~\ref{lem:cesaro-uniform-l1}, still uniformly in $\q$.
	By arguments similar to those
	in the proof of Theorem~\ref{thm-consistency-param}, the convergences of both terms can be seen to be uniform in $\q$.

	Finally, the continuity of the limit and consistency of $\hat\q_n'$ for $\q_0$ give that the $\ddot{\iota}_n (f_{\theta_n'}) $
	tends to the limit evaluated at $\q_0$. This can be seen to be equal to the matrix $-V_0$ with the help
	of \eqref{eqn-lemma-limit-degree-dist-equality}, where the two terms involving $\ddot f_\q$ cancel each other.
\end{proof}

\begin{proof}[Proof of Theorem~\ref{cor:affine-test}]
	Because the true parameter is on the boundary of the parameter set, the \mle may not solve the likelihood equations. Instead,
	we use its characterization as the maximizer of the log likelihood. The log likelihood evaluated at the parameter
	$\q_0+h/\sqrt n$ satisfies
	$$\ell_n\bigl(f_{\q_0+h/\sqrt n}\bigr)-\ell_n(f_{\q_0})=h^T\sqrt n\,\dot\iota_n(f_{\q_0})+\thalf h^T\ddot\iota_n(f_{\q_n'(h)})h,$$
	where $\q_n'(h)$ is on the line segment between $\q_0$ and $\q_0+h/\sqrt n$.
	The rescaled \mle $\hat h_n=\sqrt n (\hat\q_n-\q_0)$ maximizes this process over the set $H_n$ of all
	$h=\sqrt n(\q-\q_0)$ such that $\q=(\a,\b)$ belongs to the parameter set $\{(a,\b): \a\in (-1+\e,\e^{-1}), \b\le 1\}$.
	Since $\hat\q_n$ is consistent for $\q_0$, by Theorem~\ref{thm-consistency-param}, the set $H_n$ can be further reduced to
	a set such that $\|h\|<\sqrt n\,\d_n$, for some $\d_n\rightarrow 0$. By the arguments in the proof
	of Theorem~\ref{thm-param-clt}, we have $\sup_\q |\ddot \iota_n(f_\q)-\ddot \iota(f_\q)|\ra 0$, in probability.
	Combined with the continuity of $\q\mapsto \ddot \iota(f_\q)$, we see that
	$\sup_{h\in H_n}|\ddot\iota_n(f_{\q_n'(h)})-\ddot\iota(f_{\q_0})|\ra 0$, almost surely. From the
	nonsingularity and negative definiteness of $-V_0=\ddot\iota(f_{\q_0})$, we then see that
	$h^T\ddot\iota_n(f_{\q_n'(h)})h<-c\|h\|^2$, for every $h\in H_n$ and some $c>0$, with probability tending to one.
	Using that  $0\in H_n$ and $|h^T\sqrt n\,\dot\iota_n(f_{\q_0})|\le \|h\| O_P(1)$, we conclude that
	$\hat h_n=O_P(1)$. Next the argmax continuous mapping theorem (e.g.\ Corollary~5.58 and Lemma~7.13 in
	\cite{vdvaart2000asymptotic}) shows that
	$$\hat h_n=\argmax_{h\in H_n}\bigl(h^T\sqrt n\,\dot\iota_n(f_{\q_0})+\thalf h^T\ddot\iota_n(f_{\q_n'(h)})h\bigr)
		\weak \argmax_{h\in H}(h^TZ_0-\thalf h^TV_0h),$$
	for $H=\{(h_1,h_2): h_2\le 0\}$ the limit of the sequence of sets $H_n$ and $Z_0\sim N(0,V_0)$ the limit in distribution of
	the sequence $\sqrt n\,\dot\iota_n(f_{\q_0})$. The right side has the claimed distribution, by Lemma~\ref{LemmaArgmaxDist} (where we set $V = V_0$).
\end{proof}

\begin{lemma}
	\label{LemmaArgmaxDist}
	If $\hat h=\argmax_{h: a^Th\le 0}(2h^TZ-h^TV_0h)$ for $a\in \RR^d$ and a random variable $Z\sim N_d(0,V)$
	for positive semi-definite matrices $V$ and $V_0$, then $a^T\hat h\sim W\bbind_{W\le 0}$ for $W\sim N(0,a^TV_0^{-1} V V_0^{-1} a)$.
\end{lemma}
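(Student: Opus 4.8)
The plan is to solve the constrained quadratic optimization explicitly and simply read off the law of $a^T\hat h$. Since $V_0$ appears inverted in the statement it must be nonsingular, and as a positive semi-definite nonsingular matrix it is in fact positive definite, so the objective $Q(h):=2h^TZ-h^TV_0h$ is strictly concave with gradient $2Z-2V_0h$; hence its unique unconstrained maximizer is $h^\star=V_0^{-1}Z$. First I would introduce $W:=a^Th^\star=a^TV_0^{-1}Z$. As a linear image of the Gaussian vector $Z\sim N_d(0,V)$, it satisfies $W\sim N\bigl(0,\,a^TV_0^{-1}VV_0^{-1}a\bigr)$, which is exactly the variance claimed for the limit law.

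Next I would split according to whether the unconstrained maximizer is feasible for the half-space constraint $a^Th\le 0$, i.e.\ according to the sign of $W$. If $W\le 0$, then $h^\star$ already satisfies the constraint, and by strict concavity it remains the unique constrained maximizer, so $\hat h=h^\star$ and $a^T\hat h=W$. If $W>0$, the unconstrained maximizer is infeasible, and I would show that the constrained maximizer lies on the boundary hyperplane $\{h: a^Th=0\}$, so that $a^T\hat h=0$. Combining the two cases yields $a^T\hat h=W\bbind_{W\le 0}$, which is precisely the asserted distribution.

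The only step that needs genuine care is the claim that the constraint is active when $W>0$. I would establish this by a monotonicity-along-segments argument: for any feasible point $h$ with $a^Th<0$, consider the segment $t\mapsto h+t(h^\star-h)$ for $t\in[0,1]$. The restriction of $Q$ to this segment is strictly concave in $t$, and its derivative vanishes at $t=1$ because $\nabla Q(h^\star)=0$; hence the restriction is strictly increasing on $[0,1]$. Since $a^Th<0<a^Th^\star=W$, the segment crosses $\{a^Th=0\}$ at some $t_0\in(0,1)$, and $Q$ is strictly larger at the crossing point than at $h$. Thus no interior feasible point can be optimal, so the maximum is attained on $\{a^Th=0\}$, where $Q$ is again strictly concave and admits a unique maximizer; this shows $\hat h$ is well defined with $a^T\hat h=0$.

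A cleaner alternative, which I would mention, is to argue directly from the KKT conditions: stationarity gives $\hat h=V_0^{-1}Z-\tfrac{\lambda}{2}V_0^{-1}a$ with multiplier $\lambda\ge 0$, and complementary slackness makes the constraint bind exactly when $\lambda>0$; imposing $a^T\hat h=0$ then forces $\lambda=2W/(a^TV_0^{-1}a)$, which is nonnegative precisely when $W\ge 0$, recovering the same dichotomy. Either route is short, and the main obstacle here is not any delicate estimate but simply keeping the case analysis and the boundary optimality argument honest.
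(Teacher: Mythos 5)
Your proof is correct, and it reaches the same dichotomy as the paper's, but by a different route. The paper completes the square, writing $2h^TZ-h^TV_0h=-\|V_0^{1/2}h-V_0^{-1/2}Z\|^2+Z^TV_0^{-1}Z$, so that $\hat h=V_0^{-1/2}\Pi_G(V_0^{-1/2}Z)$ for $\Pi_G$ the Euclidean projection onto the half-space $G=\{g: b^Tg\le 0\}$ with $b=V_0^{-1/2}a$; the identity $b^T\Pi_G(\tilde W)=\bbind_{\{b^T\tilde W\le 0\}}b^T\tilde W$ then delivers the claim in one line, because projection onto a half-space either fixes the point or lands it on the bounding hyperplane. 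You avoid the change of variables and instead argue directly on the quadratic: identify the unconstrained maximizer $V_0^{-1}Z$, split on the sign of $W=a^TV_0^{-1}Z$, and prove boundary activeness in the infeasible case via the strictly-concave-segment argument (or KKT). What you must supply by hand --- that no feasible point with $a^Th<0$ can be optimal when $W>0$, and that a maximizer exists and is unique --- is exactly what the projection formulation gives for free; conversely, your version is more self-contained and does not require introducing $V_0^{1/2}$. Both arguments correctly use that $V_0$, being positive semi-definite and invertible, is positive definite, and both compute the variance of $W=a^TV_0^{-1}Z$ identically. The only cosmetic discrepancy is a factor of $2$ in the objective ($2h^TZ$ in the lemma versus $h^TZ$ as used in the proof of Theorem~\ref{cor:affine-test}), which rescales $\hat h$ but does not affect the indicator structure or which case occurs; your argument is insensitive to it in the same way the paper's is.
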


\begin{proof}[Proof of Lemma~\ref{LemmaArgmaxDist}]
	Since $2h^TZ-h^TV_0h=-\|V_0^{1/2}h-V_0^{-1/2}Z\|^2+Z^TV_0^{-1}Z$, the variable $\hat h$ can be
	seen to be equal to $V_0^{-1/2}\Pi_G(V_0^{-1/2} Z)$, for $\Pi_G$ the projection onto the half space
	$G:=V_0^{1/2}\{h: a^Th\le 0\}= \{g: b^Tg\le 0\}$, for $b=V_0^{-1/2}a$. Therefore, $a^T\hat h=b^T\Pi_G(\tilde W) = \bbind_{b^T \tilde W \le 0} b^T\tilde W$,
	for $\tilde W:= V_0^{-1/2} Z\sim N_d(0,V_0^{-1/2}V V_0^{-1/2})$.
	The proof is complete by setting $W : = b^T \tilde W = a^T V_0^{-1} Z$.
\end{proof}

\begin{proof}[Proof of Theorem~\ref{TheoremANQMLE}]
	Since $\sum_{k=1}^\infty a_kP_{>k}(n)=\sum_{j=1}^\infty\sum_{k=1}^{j-1}a_kP_j(n)$,
	we can write  $\dot{\tilde{\iota}}_n(f_{\theta_0})=\phi\bigl(S_n)$, for $S_n= \sum_{k=1}^\infty F_{\q_0}(k)P_k(n)$,
	and $\phi: \RR^d\times\RR^d\times \RR\to\RR^d$ given by $\phi(s_1,s_2,s_3)= s_1-\frac1{s_3}s_2$.
	Similarly, it follows from \eqref{eqn-lemma-limit-degree-dist-equality} that
	$\phi(S^{\sss (0)})=0$, for $S^{\sss (0)}=\sum_{k=1}^\infty F_{\q_0}(k)p_k^{\sss (0)}$.
	The assumption on $S_n$ and the delta-method then give that
	the sequence $\sqrt n \dot{\tilde{\iota}}_n(f_{\theta_0})-\sqrt n \bigl(\phi(S_n)-\phi(S^{\sss (0)})\bigr)$
	tends in $\RR^d$ in  distribution to a normal
	distribution with mean zero and covariance matrix $V=(I,-I/S^{\sss (0)}_3, S^{\sss (0)}_2/(S^{\sss (0)}_3)^2)
		W_0(I,-I/S^{\sss (0)}_3, S^{\sss (0)}_2/(S^{\sss (0)}_3)^2)^T$.

	The difference between the pseudo score function $\dot{\tilde \iota}(f_\q)$ and $\dot\iota(f_\q)$ is only
	in their second terms, which are $S_{\dot f_\q}/S_{f_\q}(n)$ for the first and the C\'esaro averages of these for the
	second. The Hessian matrices $\ddot{\tilde \iota}(f_\q)$ and $\ddot\iota(f_\q)$ also only differ by the
	derivatives of the second terms. In the proof of Theorem~\ref{thm-param-clt} it was noted that the latter are
	again C\'esaro averages (see \eqref{EqHessian}) and they were shown to converge by showing convergence of the
	individual terms. Thus, this proof applies also in the present situation, and the full proof can be
	finished as the proof of Theorem~\ref{thm-param-clt}.
\end{proof}

\begin{proof}[Proof of Corollary~\ref{thm:pmle-asymp-normal-trunc}]
	In view of Theorem~\ref{TheoremANQMLE} it suffices to verify the asymptotic normality of the
	sequence $\sum_{k=1}^\infty F_{\q_0}(k)\sqrt n(P_k(n)-p_k^{\sss (0)})$, for $F_{\q_0}$ as given.
	Because $f_{\q}(k)$ is constant in $k\ge K$, so is $F_{\q_0}(k)$ and hence
	$\sum_k F_{\q_0}(k)P_k(n)= \sum_{k\le K}F_{\q_0}(k)P_k(n)+F_{\q_0}(K)P_{>K}(n)$, and the same
	for $p_k^{\sss (0)}$ instead of $P_k(n)$. The desired convergence therefore follows from
	Proposition~\ref{prop:asymp-normality-trunc} and the continuous mapping theorem.
\end{proof}

\newpage
\appendix
\section{Branching Processes and Rooted Ordered Trees}
\label{sec:rooted-tree}
A \textit{rooted ordered tree} is a tree in which one node is designated as the \textit{root}
and the other nodes can be oriented in parent-child relations in reference to their distance to this root node.
In a dynamic setup the root is the initial ancestor who is responsible
for giving births directly or indirectly to every other node.  In the Ulam--Harris labelling notation
for  branching processes, the root is denoted by $\emptyset$ and every
other node has the form $(i_1,\dots,i_l)$, for positive natural numbers $i_j\in\bbN_{+}$ with $l\in\bbN_{+}$.
The node $x = (i)$ is the $i$-th child of the root, and more generally
the node $x = (i_1,\dots,i_k)$ is the $i_k$-th child of $\tilde{x} = (i_1,\dots,i_{k-1})$.
By induction, the set of all possible individuals is
\begin{equation*}
	\scrI= \lrbkt{\emptyset} \cup \left( \bigcup_{k=1}^\infty \bbN_{+}^k\right).
\end{equation*}
The root is the zero-th generation and the $k$-th generation consists of all $x \in \bbN_+^k$.
For $x = (x_1, \ldots, x_k)$ and $y = (y_1, \dots, y_l)$ the notation $xy$ is shorthand for the concatenation
$(x_1, \dots, x_k, y_1, \dots, y_l)$, and, in particular, $xl=(x_1, \dots, x_k, l)$.
The labelling of the nodes contains all parental information and a rooted ordered tree is defined to be
a subset $G \subset \scrI$ such that 
if $x= (x_1, \ldots,x_k)\in G$, then both $(x_1,\ldots, x_{k-1})\in G$ and $(x_1,\ldots, x_k-1)\in G$ in case $x_k\ge 2$.
A corresponding graphical representation is obtained by drawing a node for every $x\in G$ and
connecting two nodes by an edge if they are in a parent-child relationship.
The \textit{degree} of node $x \in G$ is
\begin{equation}
	\deg (x,G) =|\lrbkt{l \in \bbN_{+}\mid xl\in G }|+1,
	\label{eqn-ee-deg-def}
\end{equation}
where the extra one is for the parent of the node.

To set up a stochastic branching process, each individual $x \in \scrI$ is associated with a
stochastic variable $\lambda_x$ and two stochastic processes $\xi_x$ and $\varphi_x$, called the
\textit{life length},  the \textit{reproduction process} and the \textit{characteristic} of $x$. The triples
$(\lambda_x,\xi_x,\varphi_x)$ are taken \iid across the nodes $x\in\scrI$. Formally they
may be defined as copies on a product probability space
\begin{equation*}
	(\Omega, \calB, P) =  \prod_{x \in \scrI} (\Omega_x, \calB_x, P_x),
\end{equation*}
where  each $(\Omega_x, \calB_x, P_x) $ is a copy of a probability space $(\Omega_0, \calB_0, P_0)$.
For a given measurable map $(\lambda,\xi,\varphi)$ defined on $(\Omega_0, \calB_0, P_0)$, we then define
$(\lambda_x,\xi_x,\varphi_x)(\omega)= (\lambda,\xi,\varphi)(\omega_x)$ if $\omega=(\omega_x)_{x\in\scrI}\in \Omega$.
The life length $\lambda$ is a nonnegative
random variable, which in our case we take identically $\infty$ (no node will die). The reproduction
process $\xi=(\xi(t): t\ge 0)$ will be a counting process starting with $\xi(0)=0$ and increasing by steps
of size $1$ at random times. We identify $\xi$ with a random $\bbN_+$-valued measure through $ \xi([0,t])=\xi(t) $
and denote by $\mu : t \mapsto \bbE[\xi(t)]$ its mean (or intensity) measure,
which is often called the \textit{reproduction function} in this context. The characteristic
will also be a stochastic process $\varphi=(\varphi(t): t\ge 0)$, where we may set $\varphi(t)=0$ for $t<0$.

The  random point process $\xi_x$  models the birth times of the children of individual $x$ relative to
the birth time $\sigma_x$ of $x$. The latter birth times are formally defined recursively
by setting the birth time of the root $\emptyset$ at $t = 0$ (hence $\sigma_\emptyset = 0$), and next
the birth time $\sigma_y$ of $y$ in calendar time by
\[
	\sigma_y = \sigma_x + \inf\{t\ge 0\colon \xi_x(t)\ge l \},\qquad \text{ if }y = xl.
\]
The \textit{calendar time} is the evolution time of the branching process, as opposed to the
local time scales of the processes $\xi_x$ and $\varphi_x$, of which the local zero time is interpreted to be
$\sigma_x$ in calendar time. The variable $\varphi_x(t)$ is interpreted as the  characteristic of individual $x$
when $x$ has age $t$. Calendar time is also different from the discrete time steps used to describe the
evolution of a \pa network.

For a given characteristic we define the process
\begin{equation*}
	Z_t^{\varphi} = \sum_{x \in \scrI\colon \sigma_x\le t} \varphi_x(t- \sigma_x).
\end{equation*}
The variable $t-\sigma_x$ is the time since birth of individual $x$ and hence
$\varphi_x(t- \sigma_x)$ can be interpreted as the characteristic of individual $x$ at calendar time $t$.
The variable $Z_t^\varphi$ is the sum of all such characteristics over the individuals that are alive at time $t$.

A branching process is \textit{supercritical} and \textit{Malthusian} if its reproduction function $\mu$
does not concentrate on any lattice $\lrbkt{0, h , 2h,\dots}$, for some $h >0$, and there
exists a number $\malt>0 $ such that
\begin{equation}
	\int_0^{\infty} {\mathrm e}^{-\malt t} \mu(dt) = 1.
	\label{eqn:network-ee-malthusian}
\end{equation}
We shall also assume the integrability assumption
\begin{equation}
	\int_0^\infty t^2 \mathrm{e}^{-\malt t}\, \mu(dt) < \infty.
	\label{eqn:network-ee-lambda-finite}
\end{equation}
Existence of a solution to equation \eqref{eqn:network-ee-malthusian} is called the \textit{Malthusian} assumption,
and $\malt$ is called the Malthusian parameter.

The following proposition can be obtained by combining Theorem 3.1, Corollary 3.4
and Theorem 6.3 of \cite{nerman1981convergence}.
Define $_\lambda \xi(t) = \int_0^t e^{-\lambda u}\, \xi(du)$.

\begin{proposition}
	\label{prop-conv-prob}
	Assume that the reproduction function $\mu(t) = \bbE[\xi(t)]$ satisfies conditions \eqref{eqn:network-ee-malthusian}
	and \eqref{eqn:network-ee-lambda-finite} and does not concentrate on any lattice.  Assume
	that $t\mapsto \bbE[\varphi(t)]$ is continuous almost everywhere with respect to the Lebesgue measure
	and the following conditions hold:
	\begin{gather}
		\label{eqn-prob-conv-sup-condition}
		\sum_{k=0}^\infty \sup_{ k\le t \le k+1} \big( e^{-\malt t} \bbE[\varphi(t)] \big) < \infty,\\
		\bbE[\sup_{s \le t} \varphi(s) ] < \infty \quad \text{for all } t < \infty.  \label{eqn-prob-conv-exp-condition}
	\end{gather}
	Then  there exists a random variable $Y_\infty$ depending only on the reproduction process $\xi(t)$ such that,   as $ t \rightarrow \infty$,
	\begin{equation}
		e^{-\malt t} Z_t^{\varphi} \xrightarrow{P} Y_{\infty} m_{\infty}^\varphi,
		\label{eqn-conv-in-prob}
	\end{equation}
	where $m_\infty^\varphi$ is defined as
	\begin{equation*}
		m_\infty^\varphi = \frac{ \int_0^\infty e^{- \malt t} \bbE [ \varphi(t) ] \,dt}{ \int_{0}^\infty t e^{-\malt t}\, d\mu(t)}.
	\end{equation*}
	The convergence in \eqref{eqn-conv-in-prob} also holds in the $L_1$ sense if
	\begin{equation}
		\bbE[_\malt \xi(\infty) \log^{+}{}_\malt \xi(\infty)] <\infty.
		\label{eqn-xi-logp}
	\end{equation}
	Suppose that the reproduction process  $\xi$ satisfies \eqref{eqn-xi-logp}, and both $\varphi_1$ and $\varphi_2$ satisfy the conditions
	\eqref{eqn-prob-conv-sup-condition} and \eqref{eqn-prob-conv-exp-condition}.
	Define $T_t $ as total number of births up to and including time $t$.
	Then, on the event $\lrbkt{T_t \rightarrow \infty}$, as $t \rightarrow \infty$.
	\begin{equation}
		\frac{Z_t^{\varphi_1}}{Z_t^{\varphi_2}} \xrightarrow{P} \frac{m_\infty^{\varphi_1}}{ m_\infty^{\varphi_2}}
		= \frac{ \int_0^\infty e^{- \malt t} \bbE [ \varphi_1(t) ] \, dt }{\int_0^\infty e^{- \malt t} \bbE [ \varphi_2(t) ]\, dt} .
		\label{eqn-conv-quotient}
	\end{equation}
	If $\varphi_1$ and $\varphi_2$ have càdlàg paths and there exists a $\lambda < \malt$ such that
	\begin{align}
		\bbE[_\lambda\xi(\infty) ]                          & < \infty, \label{eqn:V-beta} \\
		\bbE \bigl[\sup_t e^{-\lambda t} \varphi_i(t)\bigr] & <\infty,\qquad i=1,2,
		\label{eqn:beta-xi}
	\end{align}
	then, on $\lrbkt{T_t \rightarrow \infty}$, the convergence in \eqref{eqn-conv-quotient} is also in the almost sure sense.
\end{proposition}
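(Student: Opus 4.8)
The plan is to deduce the four assertions directly from the general (C--M--J) branching-process convergence theory of \cite{nerman1981convergence}, by matching our notation and hypotheses to his and quoting the three cited results; no independent probabilistic argument is needed. I first fix the correspondence: the Malthusian parameter $\malt$ solving \eqref{eqn:network-ee-malthusian} is Nerman's $\alpha$; the constant $\beta:=\int_0^\infty t\,\mathrm{e}^{-\malt t}\,\mu(dt)$ is finite by \eqref{eqn:network-ee-lambda-finite}; and the non-lattice and Malthusian hypotheses hold by assumption. The random variable $Y_\infty$ is the almost sure limit of the fundamental (reproductive-value) martingale of the process; it depends only on the reproduction structure $\xi$ and is the common factor occurring in the limit of every normalized characteristic.

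For the first assertion and its $L_1$ refinement I would invoke Nerman's Theorem 3.1. The two hypotheses imposed there on the characteristic are precisely \eqref{eqn-prob-conv-sup-condition} and \eqref{eqn-prob-conv-exp-condition}: the summability $\sum_{k}\sup_{k\le t\le k+1}\mathrm{e}^{-\malt t}\bbE[\varphi(t)]<\infty$ of \eqref{eqn-prob-conv-sup-condition}, combined with the assumed almost-everywhere continuity of $t\mapsto\bbE[\varphi(t)]$, is the standard criterion for $t\mapsto\mathrm{e}^{-\malt t}\bbE[\varphi(t)]$ to be directly Riemann integrable, while \eqref{eqn-prob-conv-exp-condition} bounds the running supremum of $\varphi$. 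The theorem then yields $\mathrm{e}^{-\malt t}Z_t^\varphi\xrightarrow{P}Y_\infty m_\infty^\varphi$ with $m_\infty^\varphi=\bigl(\int_0^\infty \mathrm{e}^{-\malt t}\bbE[\varphi(t)]\,dt\bigr)/\beta$, exactly the limit in \eqref{eqn-conv-in-prob}; its $L_1$ part supplies the $L_1$ convergence once the $x\log x$ condition \eqref{eqn-xi-logp} forces the fundamental martingale, and hence $\mathrm{e}^{-\malt t}Z_t^\varphi$, to be uniformly integrable.

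For the ratio statement I would apply the preceding conclusion to $\varphi_1$ and $\varphi_2$ separately. Because both limits carry the common factor $Y_\infty$, the pair $(\mathrm{e}^{-\malt t}Z_t^{\varphi_1},\mathrm{e}^{-\malt t}Z_t^{\varphi_2})$ converges in probability to $(Y_\infty m_\infty^{\varphi_1},Y_\infty m_\infty^{\varphi_2})$; on the event $\{T_t\rightarrow\infty\}$ the process grows without bound, so $Y_\infty>0$ almost surely there, and the continuous mapping theorem applied to $(a,b)\mapsto a/b$ cancels $Y_\infty$ and delivers \eqref{eqn-conv-quotient} in probability---this is Nerman's Corollary 3.4. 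For the almost sure version I would invoke his Theorem 6.3, whose hypotheses are the existence of some $\lambda<\malt$ with $\bbE[{}_\lambda\xi(\infty)]<\infty$, namely \eqref{eqn:V-beta}, together with $\bbE[\sup_t\mathrm{e}^{-\lambda t}\varphi_i(t)]<\infty$ for the càdlàg characteristics, namely \eqref{eqn:beta-xi}. Theorem 6.3 strengthens the convergence of each normalized characteristic to almost sure, and repeating the continuous-mapping argument on $\{T_t\rightarrow\infty\}$ gives the almost sure convergence of the quotient.

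The main obstacle is not the probabilistic content but the careful bookkeeping of hypotheses: one must verify that \eqref{eqn-prob-conv-sup-condition}--\eqref{eqn-prob-conv-exp-condition}, \eqref{eqn-xi-logp} and \eqref{eqn:V-beta}--\eqref{eqn:beta-xi} are stated in exactly the form (or a form implying) Nerman's, whose conditions are phrased through direct Riemann integrability and uniform integrability of the fundamental martingale rather than the elementary summability and supremum bounds used here. The one genuinely nontrivial point to record is the strict positivity of $Y_\infty$ almost surely on $\{T_t\rightarrow\infty\}$, which prevents the limiting denominator from vanishing, makes the quotient well defined, and is precisely why the last two assertions are restricted to that event.
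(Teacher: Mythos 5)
Your proposal is correct and follows essentially the same route as the paper, which offers no independent argument but simply states that the proposition is obtained by combining Theorem 3.1, Corollary 3.4 and Theorem 6.3 of Nerman (1981). Your additional bookkeeping---matching the summability and supremum conditions to direct Riemann integrability, the $x\log x$ condition to uniform integrability of the fundamental martingale, and noting the positivity of $Y_\infty$ on the survival event so the quotient is well defined---is consistent with that citation and, if anything, more explicit than the paper itself.
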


\section{Asymptotic normality of Empirical Degrees}
\label{sec:janson-truncated}
In this section we derive the asymptotic normality of the empirical degrees in some cases of the preferential attachment model,
using an urn process studied by \cite{janson2004functional}.

The urn process consists of vectors $X_n = (X_{n,1}, \dots, X_{n,q})^T$ in $[0,\infty)^q$, of which the $i$-th coordinate
represents the quantity in the $i$-th urn at time $n$.
Given are, for each $i \in [q]$, an `activity' $a_i \ge 0$  and a vector $\xi_{i} = (\xi_{i,1}, \dots, \xi_{i,q})^T\in\RR^q$ with
$\xi_{i,j}\ge 0$ for $j \neq i$ and $\xi_{ii} \ge -1$.
The process $(X_n)_{n=0}^\infty$ evolves as a Markov process, with transitions determined by: given $X_{n-1}$,
\begin{enumerate}
	\item pick an urn $i\in [q]$ with probability $a_i X_{n-1,i}/\sum_{j=1}^q a_j X_{n-1,j}$;
	\item set $X_{n} := X_{n-1} + \xi_i$.
\end{enumerate}
It is assumed that
$\sum_{j=1}^q\xi_{i,j}\ge 0$, for every $i\in[q]$, with strict inequality for some $i$, so that the
total content $\sum_{i=1}^q X_{n,i}$ of the urns is nondecreasing.
(Actually, \cite{janson2004functional} allows the vectors $\xi_i$ to be random, but
deterministic vectors suffice in our situation, and allow a simpler statement of the main
result.)

Define the `transfer' matrix $A\in \bbR^{q \times q}$ by
\begin{equation}
	A_{ij} = a_j \xi_{j,i}.
	\label{eqn:def-A}
\end{equation}
We assume that $A$ is irreducible.
By an application of the Perron--Frobenius theorem (to the nonnegative matrix  $A+\a I_q$, for sufficiently large $\a$),
it can be seen that the eigenvalue of $A$ with the largest real value is real, and
the eigenvalues can be ranked by their real parts as $\lambda_1 > \Re \lambda_2 \ge \Re \lambda_3 \ge \cdots$.
Furthermore, $\l_1>0$, has multiplicity one, and the associated eigenvector $v_1$ has all positive coordinates.
We normalize $v_1$ such that $a^T v_1 = 1$, where $a=(a_1,\ldots, a_q)^T$.

In this setup, conditions (A1)-(A6) in \cite{janson2004functional} are satisfied (see his Lemma~2.1).
A crucial further assumption in the following proposition, which restates Theorems~3.21-3.22 of \cite{janson2004functional},
is that $\Re \lambda_2<\l_1/2$.

Define the following quantities:
\begin{gather}
	B := \sum_{i=1}^q v_{1i} a_i \xi_i \xi_i^T, \label{eqn:janson-B} \\
	\varphi(s, A) := \sum_{n=1}^\infty \frac{s^n}{ n!} A^{n-1} = \int_{0}^s e^{tA}\, dt, \\
	\psi(s, A) := e^{sA} - \lambda_1 v_1 a^T \varphi(s,A). \label{eqn:janson-psi}
\end{gather}

\begin{proposition}
	Under the preceding conditions, as $n \rightarrow \infty$,
	\[n^{-1} X_n \xrightarrow{\text{a.s.}} \lambda_1 v_1.\]
	If, moreover,  $\Re \lambda_2 < \lambda_1/2$, then, as $n \rightarrow \infty$,
	\[ n^{1/2} (n^{-1} X_n -  \lambda_1 v_1) \xrightarrow{\text{d}} N(0, \Sigma),\]
	where the covariance matrix $\Sigma$ is defined as (with quantities defined in \eqref{eqn:janson-B}--\eqref{eqn:janson-psi})
	\begin{equation}
		\Sigma = \int_{0}^\infty \psi(s,A) B \psi(s,A)^T e^{-\lambda_1 s} \lambda_1 ds - \lambda_1^2 v_1 v_1^T.
		\label{eqn:janson-Sigma}
	\end{equation}
	\label{prop:janson-conv}
\end{proposition}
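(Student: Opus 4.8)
The plan is to recognize Proposition~\ref{prop:janson-conv} as a direct specialization of the functional limit theory of \cite{janson2004functional}, so the work is almost entirely one of translation and verification rather than fresh probabilistic argument. First I would set up the dictionary between the Markov urn process $(X_n)$ described above and Janson's generalized Pólya urn: the activities $a_i$, the deterministic replacement vectors $\xi_i$, and the transfer matrix $A$ defined in \eqref{eqn:def-A} are exactly the ingredients of his scheme, with our deterministic $\xi_i$ being the degenerate case of his (in general random) replacements. This degeneracy only simplifies matters, since all of his moment and integrability hypotheses on the replacement distributions become trivial for bounded deterministic vectors.

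The next step is to confirm that Janson's standing assumptions (A1)--(A6) hold in our setting. As already noted in the text preceding the proposition, this is precisely the content of his Lemma~2.1, given that $A$ is irreducible and the growth condition $\sum_{j} \xi_{i,j}\ge 0$ holds for every $i$ with strict inequality for at least one $i$. Irreducibility together with the Perron--Frobenius theorem (applied to $A+\alpha I_q$ for large $\alpha$, as in the discussion above) yields a simple, real, strictly positive largest eigenvalue $\lambda_1$ with a strictly positive eigenvector $v_1$, normalized by $a^T v_1 = 1$, and the spectral ordering $\lambda_1 > \Re\lambda_2 \ge \cdots$. With these structural facts in hand, the strong law for the urn content---the first assertion $n^{-1}X_n \asconv \lambda_1 v_1$---follows immediately from Janson's Theorem~3.21.

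For the second assertion I would invoke Janson's Theorem~3.22, whose crucial hypothesis is exactly the spectral-gap condition $\Re\lambda_2 < \lambda_1/2$ assumed in the proposition; this guarantees Gaussian fluctuations at the $\sqrt{n}$ scale, as opposed to the larger fluctuations or non-normal limits that arise when $\Re\lambda_2 \ge \lambda_1/2$. It then remains to identify the limiting covariance $\Sigma$ with the expression in \eqref{eqn:janson-Sigma}. Here the matrix $B$ of \eqref{eqn:janson-B} is the $v_1$-weighted second-moment matrix of the replacements, and $\psi(s,A)$ in \eqref{eqn:janson-psi} is Janson's matrix $e^{sA}$ corrected by the rank-one projection $\lambda_1 v_1 a^T\varphi(s,A)$ onto the dominant direction; substituting these into his covariance formula produces \eqref{eqn:janson-Sigma}.

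The main obstacle, such as it is, lies not in any deep estimate but in the careful bookkeeping of this last step: Janson states his limiting covariance in a slightly different but equivalent parametrization, and one must unwind his normalizations---in particular the choice $a^T v_1 = 1$ and the subtraction of the projection term---to see that his formula and \eqref{eqn:janson-Sigma} coincide. A secondary point requiring a line of justification is that our deterministic replacement vectors genuinely fit the hypotheses of his random-replacement theorems, so that the degenerate second-moment matrix $B$ is the correct input. Once these identifications are checked, both assertions follow verbatim from \cite{janson2004functional}.
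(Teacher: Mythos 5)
Your proposal is correct and follows exactly the route the paper takes: the paper offers no independent proof of Proposition~\ref{prop:janson-conv}, presenting it as a restatement of Theorems~3.21--3.22 of \cite{janson2004functional} after noting that conditions (A1)--(A6) hold by his Lemma~2.1 in the deterministic-replacement, irreducible setting. Your additional remarks on the Perron--Frobenius structure, the normalization $a^Tv_1=1$, and the identification of $B$ and $\psi(s,A)$ in the covariance formula are exactly the bookkeeping the paper implicitly relies on.
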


The preferential attachment model would be naturally described using an infinite number of urns,
with the content $X_{n,i}$ of the $i$-th urn corresponding to the number of nodes of degree $i$ at
time $n$. The activities $a_i$ can then be set equal to the preferences $f(i)$ and the vectors $\xi_i$ defined by
their coordinates
\begin{equation}
	\label{EqDefxi}
	\xi_{i,j} = - \bbind_{\{ j = i\}} + \bbind_{\{ j = i +1\}} + \bbind_{\{j=1\}}.
\end{equation}
The last term $\bbind_{\{j=1\}}$ corresponds to the new node, which has degree 1, and is counted
in the first urn, while the first two terms on the right describe the decrease and increase by 1 of the
numbers of nodes of degrees $j$ and $j+1$,  respectively, if the new node is attached to an existing node
of degree $j$.  There would then be infinitely many vectors ($i\in \bbN_+$) each with infinitely many coordinates
($j\in\bbN_+$), but, unfortunately, Proposition~\ref{prop:janson-conv} allows a fixed, finite number of urns only. In
the following, we consider two examples of \pa models
in which the infinite process can be reduced to a finite number: the case of an affine
\pa function, and the case that the \pa function is constant from a fixed degree onward.

In the affine case with \pa function $f(k)=k+\a$, we study the degree distribution up to some given degree $\k$ by gathering
all nodes of degree strictly bigger than $\k$ in a single urn, labelled $\k+1=:q$. To accommodate that the latter nodes
have different preferences, we define the vectors $\xi_i\in\bbR^{\k+1}$ for $i=1,\ldots, \k-1$ as in \eqref{EqDefxi} with coordinates
restricted to $j\in [\k+1]$, but redefine
$\xi_\k$ and $\xi_{\k+1}$ by
\begin{align}
	\xi_{\kappa,j}    & = - \bbind_{\{ j = \kappa\}} + \bbind_{\{ j = \kappa +1\}} (\kappa + 1 + \a) + \bbind_{\{j=1\}},\label{Eqxik} \\
	\xi_{\kappa +1,j} & = \bbind_{\{j = \kappa +1 \}}  + \bbind_{\lrbkt{j=1}}.\label{Eqxikplusone}
\end{align}
We combine this with the vector of activities $a_\k= (1+\a, 2 + \a,\dots, \kappa +\a, 1)^T$. Thus, urns
$1,\ldots,\k$ have activities equal to the preferential attachment function, but urn $\k+1$ has activity 1.
With these definitions, for $i=1,\ldots,\k$ the variable $X_{n,i}$ corresponds to the numbers of nodes
of degree $i$, but $X_{n,\k+1}$ is set to correspond to the total preference of all nodes of degree bigger than $\k$.
Indeed, a choice of an urn $i=1,\ldots,\k-1$ follows the scheme described before, with the transition given by \eqref{EqDefxi}.
Second, a choice of urn $\k$ corresponds to choosing a node of degree $\k$; by \eqref{Eqxik}
the count of urn $\k$ is then decreased by one,
and $\k+1+\a$ balls are added to urn $\k+1$, each weighted by activity $a_{\k+1}=1$,
thus giving the correct increase of total preference of the nodes of degree bigger than $\k$. Third, a choice
of urn $\k+1$ corresponds to choosing a node of some degree bigger than $\k$; this node is replaced
by a node of degree one bigger, resulting in an increase by 1 of the total preference of the nodes of degree bigger than $\k$.
Thus,  \eqref{Eqxikplusone} correctly changes the total preferences of the nodes of degree bigger than $\k$.
In both \eqref{Eqxik} and \eqref{Eqxikplusone} the term on
the far right corresponds to the new node of degree 1, counted in $X_{n,1}$.

Asymptotic normality of the empirical degrees $P_k(n)$ in the affine case was first proved in  \cite{mori2002random}
and \cite{resnick2016asymptotic}.
We deduce it here by a simple argument based on the preceding proposition,

\begin{proposition}
	In the \pa model with \pa function $f(k) = k + \alpha$, the centered and rescaled empirical degree distribution
	$\bigl( \sqrt{n} \bigl(P_k(n) - p_k \bigr), k = 1,2,\dots \bigr)$ converges in distribution in $\bbR^{\bbN_+}$ to
	a centered Gaussian process.
	\label{prop:asymp-normality-affine}
\end{proposition}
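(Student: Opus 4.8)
The plan is to reduce, for each fixed truncation level $\kappa$, the infinite degree process to the finite $(\kappa+1)$-urn scheme already set up in \eqref{EqDefxi}--\eqref{Eqxikplusone}, apply the urn central limit theorem of Proposition~\ref{prop:janson-conv}, and then let $\kappa\to\infty$ by showing that all finite-dimensional marginals of $\bigl(\sqrt n(P_k(n)-p_k)\bigr)_{k\ge1}$ converge. Recall that in this scheme the first $\kappa$ coordinates satisfy $X_{n,i}=N_i(n)$ exactly, so that $P_i(n)=X_{n,i}/n$ for $i\le\kappa$, while the last coordinate only bookkeeps the total preference of nodes of degree exceeding $\kappa$ and will be discarded at the end.

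The crux is to verify the eigenvalue gap condition $\Re\lambda_2(A_\kappa)<\lambda_1(A_\kappa)/2$ required by Proposition~\ref{prop:janson-conv}, for the transfer matrix $A_\kappa$ defined through \eqref{eqn:def-A}. First, $A_\kappa$ is irreducible: degrees can only increase by one at a time and every incoming node enters urn $1$, so the associated digraph is strongly connected, and the Perron--Frobenius ranking of the eigenvalues applies. I would then compute the characteristic polynomial and claim the clean factorization
\[
	\det(\lambda I-A_\kappa)=\bigl(\lambda-(2+\alpha)\bigr)\prod_{j=1}^{\kappa}\bigl(\lambda+j+\alpha\bigr),
\]
which I expect to prove by induction on $\kappa$ via a cofactor expansion of the upper-Hessenberg matrix $A_\kappa$ (its only entries above the diagonal sit in the first row, and its only subdiagonal entries are $A_{i+1,i}$). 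This identifies the Perron root as the Malthusian parameter $\lambda_1=2+\alpha=\malt$, while every remaining eigenvalue equals $-(j+\alpha)$ for some $1\le j\le\kappa$. Since $\alpha>-1$ forces $j+\alpha>0$, all non-Perron eigenvalues are strictly negative, so $\Re\lambda_2=-(1+\alpha)<0<\lambda_1/2$ and the gap condition holds for every $\kappa$ with no further restriction on $\alpha$.

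With the spectral condition in hand, Proposition~\ref{prop:janson-conv} yields $n^{-1}X_n\asconv\lambda_1v_1$ and $\sqrt n(n^{-1}X_n-\lambda_1v_1)\weak N(0,\Sigma_\kappa)$. Reading off the first $\kappa$ coordinates and using $P_i(n)=X_{n,i}/n$, the almost sure limit identifies $(\lambda_1v_1)_i=p_i$ for $i\le\kappa$ (consistent with $P_i(n)\to p_i$ from Corollary~\ref{lemma-hk-pk-conv}). Since $\sqrt n(P_i(n)-p_i)$ is then literally the $i$-th coordinate of $\sqrt n(n^{-1}X_n-\lambda_1v_1)$, the vector $\sqrt n\,\bigl(P_i(n)-p_i\bigr)_{i=1}^{\kappa}$ converges in distribution to $N(0,\Sigma_\kappa')$ with $\Sigma_\kappa'$ the leading $\kappa\times\kappa$ block of $\Sigma_\kappa$. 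Because the urn counts for degrees $\le\kappa$ coincide across all truncation levels $\kappa'\ge\kappa$, these Gaussian limits form a consistent family indexed by $\kappa$.

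It remains to promote finite-dimensional convergence to convergence in $\bbR^{\bbN_+}$. Under the product topology the coordinate projections are continuous and convergence-determining, and marginal tightness of each coordinate (immediate from the one-dimensional limits) upgrades to joint tightness by a Tychonoff/diagonal argument; hence convergence of all finite-dimensional distributions to the consistent centered Gaussian family is equivalent to weak convergence of the whole process to a centered Gaussian process. The main obstacle is entirely contained in the spectral step, namely establishing the factorization of $\det(\lambda I-A_\kappa)$ uniformly in $\kappa$ and thereby the eigenvalue gap; once that is settled, the remaining steps are routine applications of the urn CLT, the projection/delta method, and the finite-dimensional criterion for weak convergence on $\bbR^{\bbN_+}$.
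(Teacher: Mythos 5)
Your proposal follows essentially the same route as the paper: reduce to the $(\kappa+1)$-urn scheme of \eqref{EqDefxi}--\eqref{Eqxikplusone}, establish the factorization $\det(\lambda I-A_\kappa)=(\lambda-(2+\alpha))\prod_{j=1}^{\kappa}(\lambda+j+\alpha)$ by induction (the paper uses column operations rather than cofactor expansion, but this is the same computation), deduce the eigenvalue gap from $\alpha>-1$, apply Proposition~\ref{prop:janson-conv}, and pass to $\bbR^{\bbN_+}$ via finite-dimensional marginals. The only (harmless) deviation is that you identify the limiting mean $\lambda_1 v_1$ by matching it against the almost sure limit from Corollary~\ref{lemma-hk-pk-conv}, whereas the paper verifies directly that $(2+\alpha)^{-1}\bigl(p_1,\dots,p_\kappa,\sum_{l>\kappa}p_l(l+\alpha)\bigr)$ is the normalized Perron eigenvector via Lemma~\ref{lemma:last-row-eigenv}; both identifications are valid.
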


\begin{proof}[A novel proof]
	We fix arbitrary $\k\in\NN$, and define
	activities $a_i$ and update vectors $\x_i$, for $i\in [\k+1]$, as indicated in \eqref{EqDefxi}--\eqref{Eqxikplusone},
	and initial vector $X_0 = (1, 0, \dots,0)^T$. For every $n$,
	the vector $(X_{n,1},\ldots, X_{n,\k}, X_{n,\k+1})$
	is then identically distributed to the vector $\bigl(N_1(n),\ldots, N_\k(n), \sum_{j>\k}N_j(n)(j+\a)\bigr)$.
	Since (see, e.g., \citet[Example 2.4]{billingsley2013convergence} or \citet[Theorem~1.6.1]{vandervaartwellner})
	weak convergence in $\bbR^{\bbN_+}$
	is the same as convergence of all finite marginals, the first assertion is proved if we can
	prove  convergence of these vectors for every fixed $\k$.
	For this we apply Proposition~\ref{prop:asymp-normality-affine}.

	The transfer matrix $A=A_\kappa$ is given by
	\begin{equation*}
		A_\kappa = \begin{pmatrix} 
			0           & \  2 + \a & \  3 + \a    & \cdots   & \kappa-1+\a & \kappa  + \a                       & 1      \\
			1 + \a \ \  & -2 - \a   & 0            & \cdots   & 0           & 0                                  & 0      \\
			0           & \ 2 + \a  & -3 - \a \ \  & \ \cdots & 0           & 0                                  & 0      \\
			0           & 0         & \  3+ \a     & \cdots   & 0           & 0                                  & 0      \\
			\vdots      & \vdots    & \vdots       & \vdots   & \vdots      & \vdots                             & \vdots \\
			0           & 0         & 0            & \cdots   & -\k+1-\a    & 0                                  & 0      \\
			0           & 0         & 0            & \cdots   & \ \k-1+\a   & -\kappa  - \a                      & 0      \\
			0           & 0         & 0            & \cdots   & 0           & \ (\kappa  + \a)( \kappa+1 + \a)\  & 1
		\end{pmatrix}.
	\end{equation*}
	We can calculate that $\det(\lambda I - A_2) = (\lambda - 2-\a)(\lambda + 1 + \a)(\lambda + 2 + \a) $.
	Furthermore, by subtracting $(\k+\a)$ times the $(\k+1)$-th column from the $\k$-th column, next
	pulling out the factor $\l+\k+\a$ from the $\k$-th column and finally adding $(\k-1+\a)$ times the $\k$-th column to the
	$(\k-1)$-th column, we can see
	that $\det (\lambda I - A_{\kappa} )= (\lambda + \kappa + \a)\det(\lambda I - A_{\kappa-1})$.
	Therefore, by  mathematical induction
	\begin{equation*}
		\det(A_\kappa - \lambda I) = \bigl(\lambda - (2+ \a)\bigr)  \prod_{l=1}^{\kappa} \bigl(\lambda + (l + \a)\bigr).
	\end{equation*}
	We conclude that all eigenvalues are real, and that the only positive eigenvalue is $\lambda_{1,\kappa} = 2 + \a$,
	so that certainly $\Re \l_{2,\k}<\l_{1,\k}/2$. The transfer matrix $A_\kappa$ is irreducible.

	Thus, the conditions of Proposition~\ref{prop:janson-conv} are verified. It suffices to identify the limiting
	mean.

	By (8.6.12) of \cite{hofstadcomplexnet}, the limiting degree distribution for \pa trees with the \pa function $f: k \mapsto  k+\a$
	has coordinates
	\begin{equation*}
		p_k = (2+\a) \frac{\Gamma(k+\a) \Gamma(3+2\a)}{\Gamma(k+3+2\a)\Gamma(1+\a)}.
	\end{equation*}
	This corresponds to the recursion (with $p_1 = (2+\a)/(3+2\a)$)
	\begin{equation}
		p_k = \frac{k-1+\a}{ k+2+2\a} p_{k-1}.
		\label{eqn:pk-recursion}
	\end{equation}
	We now verify that $v_{1,\kappa} := (2+\a)^{-1} \bigl(p_1, p_2, \dots, p_\kappa, \sum_{l=\kappa+1}^\infty p_l(l + \a) \bigr)$
	is an eigenvector associated with the eigenvalue $2 + \a$ with $a_\kappa^T v_{1,\kappa} = 1$, and hence
	obtain that $n^{-1} X_n\ra \l_{1,\k}v_{i1,\k}=\bigl(p_1, p_2, \dots, p_\kappa, \sum_{l=\kappa+1}^\infty p_l(l + \a) \bigr)$,
	almost surely.

	The first coordinate of $A_\kappa (2+\a) v_{1,\kappa}$ is
	$\sum_{l > 1} p_l (l + \a ) = (2+\a) - p_1(1+\a) = (2+\a) p_1$,  by Lemma~\ref{lemma:last-row-eigenv}.
	The second to $\kappa$-th coordinates are equal to $(2+\a)p_k$, for $k=2,\ldots,\k$,
	by the relation \eqref{eqn:pk-recursion}.  The $(\k+1)$-st coordinate
	of $A_\kappa (2+\a) v_{1,\kappa}$ is $\sum_{l > \kappa} p_l (l + \a) + (\kappa + \a)(\kappa+1 + \a)p_k$,
	which coincides with $(2 + \a) \sum_{l > \kappa} p_l (l + \a)$, again by Lemma~\ref{lemma:last-row-eigenv}.
\end{proof}

The covariance function of the limiting Gaussian vector can be obtained from \eqref{eqn:janson-Sigma}
by somewhat tedious calculations, which we omit.
We refer to  (4.28) of \cite[page 18]{resnick2016asymptotic} for its exact form.

By the simple linear relation (with drift)
\( P_{>k}(n) = 1 - \sum_{j=1}^k P_j(n) \), and the continuous mapping theorem, it follows from the
preceding theorem that,  for any  $\kappa\in\bbN_+$,
\begin{equation*}
	\left(  \sqrt{n} (P_{>k}(n) -p_{>k}); k=1,\ldots, \k\right)\weak N(0, R_{\kappa}),
\end{equation*}
The covariance matrix $R_{\kappa}$ in this limit takes a simple form, first pointed out in \cite{mori2002random},
given by
\begin{equation}
	(R_\k)_{ij} = \bbind_{\{i=j \}} p_i (1-p_i) - \bbind_{\{ i \neq j \}} p_i p_j.
	\label{eqn:R}
\end{equation}

\begin{lemma}
	The limiting degree distribution $(p_k)_{k=1}^\infty$ in the
	affine \pa model with the \pa function $f(k) = k + \a$, satisfies, for $ k \in \bbN_+$,
	\begin{equation*}
		\sum_{l > k} p_k (k + \a) = \frac{(k + \a)( k + 1 + \a)}{ 1 + \a} p_k.
	\end{equation*}
	\label{lemma:last-row-eigenv}
\end{lemma}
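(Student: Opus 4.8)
The plan is to verify the identity by a telescoping/difference argument built on the recursion \eqref{eqn:pk-recursion} for the affine limiting degrees. Throughout write $f(k)=k+\alpha$ and recall that the Malthusian parameter is $\malt=2+\alpha$, so that $\sum_{l\ge1}(l+\alpha)p_l=\sum_l f(l)p_l=\malt=2+\alpha$ by Lemma~\ref{lemma-limit-degree-dist-equality}. Rewriting \eqref{eqn:pk-recursion} in the form
\[
(k+3+2\alpha)\,p_{k+1}=(k+\alpha)\,p_k,\qquad k\ge1,
\]
will be the only structural input I need.

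First I would introduce the two sides as sequences in $k$,
\[
S_k:=\sum_{l>k}(l+\alpha)p_l,\qquad T_k:=\frac{(k+\alpha)(k+1+\alpha)}{1+\alpha}\,p_k,
\]
and compute their backward differences. For $S_k$ this is immediate: $S_k-S_{k+1}=(k+1+\alpha)p_{k+1}$. For $T_k$ I would substitute $(k+\alpha)p_k=(k+3+2\alpha)p_{k+1}$ to get $(k+\alpha)(k+1+\alpha)p_k=(k+1+\alpha)(k+3+2\alpha)p_{k+1}$, whence
\[
T_k-T_{k+1}=\frac{k+1+\alpha}{1+\alpha}\,p_{k+1}\bigl[(k+3+2\alpha)-(k+2+\alpha)\bigr]=(k+1+\alpha)p_{k+1}.
\]
Thus $S_k$ and $T_k$ have identical increments, so $S_k-T_k$ is constant in $k$.

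It then remains to identify this constant as $0$, which I would do by letting $k\to\infty$. Here $S_k\to0$ because it is the tail of the convergent series $\sum_l(l+\alpha)p_l=2+\alpha$, and $T_k\to0$ because the affine limiting distribution obeys the power law $p_k\asymp k^{-(3+\alpha)}$ (the exponent $3+\alpha$ noted after \eqref{eqn-pk-limit}, cf.\ \cite{mori2002random}), so that $T_k\asymp k^{2}p_k\asymp k^{-(1+\alpha)}\to0$ since $\alpha>-1$. Hence the constant vanishes and $S_k=T_k$ for every $k$, which is the assertion.

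The argument is almost entirely routine, and the only point requiring a little care is pinning down the additive constant, since the telescoping by itself determines $S_k-T_k$ only up to a $k$-independent quantity. The cleanest way around any awkwardness at the boundary---for instance anchoring at $k=0$, where $p_0$ is defined only through the recursion and degenerates at $\alpha=0$---is precisely the limit $k\to\infty$, which works uniformly in $\alpha>-1$ and needs nothing beyond summability of $(l+\alpha)p_l$ and the tail bound $k^2p_k\to0$. As an alternative route one could instead use $p_{>l}=(l+\alpha)p_l/(2+\alpha)$ from \eqref{eqn-lemma-limit-degree-dist-equality} to write $S_k=(2+\alpha)\sum_{l>k}p_{>l}$ and read off $S_k\to0$ directly from the summability of $p_{>l}\asymp l^{-(2+\alpha)}$, but the difference computation above is the shortest path.
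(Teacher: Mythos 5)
Your proof is correct, but it anchors the identity differently from the paper, which argues by forward induction on $k$: the base case $k=1$ is computed explicitly from $\sum_l(l+\alpha)p_l=2+\alpha$ and $p_1=(2+\alpha)/(3+2\alpha)$, and the inductive step peels off the term $l=k$ and invokes the recursion \eqref{eqn:pk-recursion}. Your difference computation $T_k-T_{k+1}=(k+1+\alpha)p_{k+1}=S_k-S_{k+1}$ is essentially the same algebra as that inductive step, so the two arguments share their engine; where they genuinely part ways is in pinning down the additive constant. The paper does this at $k=1$, needing the exact values of $p_1$ and of $\malt=\sum_lf(l)p_l=2+\alpha$; you do it at $k=\infty$, needing only \emph{finiteness} of $\sum_l(l+\alpha)p_l$ for $S_k\to0$ but additionally the tail asymptotics $p_k\asymp k^{-(3+\alpha)}$ (or equivalently the Gamma-ratio form of $p_k$) to conclude $T_k\asymp k^{-(1+\alpha)}\to0$ --- note that the cruder bound $(k+\alpha)p_k\le S_{k-1}\to0$ would not suffice here, since $T_k$ carries an extra factor of order $k$. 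Both inputs are available in the paper, so the trade is roughly even: the paper's route is self-contained modulo one explicit evaluation, yours avoids any boundary computation at the cost of importing the power-law decay. (You have also, correctly, read the displayed left-hand side as $\sum_{l>k}p_l(l+\alpha)$; the statement as printed has a typo, and the paper's own proof interprets it the same way.)
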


\begin{proof}
	For  $k = 1$ the left side of the lemma is equal to
	\begin{align*}
		\sum_{l=1}^\infty p_l ( l + \a) - p_1(1+\a)
		 & = 2 + \a - \frac{(2+ \a)(1+\a)}{(3+2\a)}  = \frac{(1+\a)(2+\a)}{1 + \a}p_1.
	\end{align*}
	This proves the claim for $k=1$. We proceed by mathematical induction.
	If the statement is true for any integer up to $k-1$, then the left side of the lemma is equal to
	\begin{align*}
		\sum_{l > k- 1} p_l(l+\a)  - p_k (k+\a)
		 & = \frac{(k-1 +\a) (k+\a)}{1+\a} p_{k} \frac{k + 2+2\a}{k - 1 + \a} - p_k (k+\a),
	\end{align*}
	by the induction hypothesis and the relation \eqref{eqn:pk-recursion} between $p_k$ and $p_{k-1}$.
	The right side can be reduced to the right side of the lemma.
\end{proof}

As a second application of Proposition~\ref{prop:janson-conv}, we obtain the asymptotic normality
of the empirical degrees in \pa models with \pa function that is constant eventually.
From our numerical experiments, we infer that the key eigenvalue condition $\Re \lambda_2 (A) < \lambda_1(A) /2$
is generally satisfied when the \pa function  is sublinear,
but we do not know general conditions for this.

To apply Proposition~\ref{prop:janson-conv} to a \pa model  with eventually constant \pa function,
we simply gather all nodes of degrees
higher than a cut-off $\k$ after which the \pa function is constant (not necessarily the smallest such value)
in a single urn, the $(\k+1$)-th one.
Since the preferences of the corresponding nodes are equal, it is not
necessary to keep track of the different degrees of the nodes inside this bin when studying the lower degrees. The evolution of
the empirical degrees $\bigl(P_1(n),\ldots, P_\k(n), P_{>\k}(n)\bigr)$  will be the same
as the evolution of the vectors $(X_{n,1},\ldots, X_{n,\k}, X_{n, \k+1})$, if we define the
preferences as $a_i=f(i\wedge \k)=f(i)$, and the transition vectors $\xi_i$ for $i\in [\k]$ by
\eqref{EqDefxi}--\eqref{Eqxikplusone} with $j$ restricted to coordinates $j\in [\k+1]$ and
$\xi_{\k+1}=(1,0,0,\ldots, 0)^T$. The last definition corresponds to adding a ball to urn 1 (counting the added node of degree
1) and moving a ball within the $(\k+1)$-st urn (for attaching this node to a node of degree $j>\k$), so not changing
$X_{n,\k+1}=N_{>\k}(n)$.

\begin{proposition}
	In the \pa model with \pa function \( f\) that is  constant on $[\k,\infty)$ such that the matrix
	$A_\k$ defined in \eqref{eqn:A-K} satisfies $\Re \lambda_2(A_\k) < \lambda_1(A_\k)/2$,
	there exist a probability distribution $(p_k)$ such that
	the sequence $\sqrt{n} \bigl(P_1(n) - p_1,\ldots, P_\k(n)-p_\k, P_{>\k}(n)-p_{>\k}\bigr) $ tends
	to a centered normal distribution.
	If the \pa function depends continuously on a parameter $\theta$, pointwise,
	then $p_k$ and the covariance matrix depend continuously on $\q$ as well. If the condition
	on $A_\k$ holds for every sufficiently large $\k$, then the sequence
	$\bigl( \sqrt{n} \bigl(P_k(n) - p_k); k=1,2,\ldots\bigr) $ converges in $\bbR^\infty$.
	\label{prop:asymp-normality-trunc}
\end{proposition}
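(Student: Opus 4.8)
The plan is to mirror the proof of Proposition~\ref{prop:asymp-normality-affine}, replacing the affine urn by the eventually-constant construction described in the paragraph preceding the statement and then invoking Proposition~\ref{prop:janson-conv}. First I would fix $\kappa$ large enough that $f$ is constant on $[\kappa,\infty)$ and the eigenvalue condition $\Re\lambda_2(A_\kappa)<\lambda_1(A_\kappa)/2$ holds, and take the finite urn scheme with $q=\kappa+1$ urns, activities $a_i=f(i\wedge\kappa)$, the transition vectors $\xi_i$ given there, and initial vector $X_0=(1,0,\ldots,0)^T$. Matching the two transition rules term by term shows that $(X_{n,1},\ldots,X_{n,\kappa},X_{n,\kappa+1})$ is identically distributed to $(N_1(n),\ldots,N_\kappa(n),N_{>\kappa}(n))$; the point is that all nodes of degree $>\kappa$ share the common preference $f(\kappa)$, so the single activity $a_{\kappa+1}=f(\kappa)$ records their total preference correctly while $X_{n,\kappa+1}$ merely counts them, and no auxiliary weighting is needed---this is what makes the eventually-constant case cleaner than the affine one.

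Next I would verify the hypotheses of Proposition~\ref{prop:janson-conv}. Irreducibility of $A_\kappa$ follows from the transition structure: the degree chain $1\to2\to\cdots\to\kappa\to\kappa+1$ together with the new-node term feeding every choice back into urn $1$ makes the associated digraph strongly connected. The eigenvalue condition is assumed. Proposition~\ref{prop:janson-conv} then yields both $n^{-1}X_n\asconv\lambda_1 v_1$ and $n^{1/2}(n^{-1}X_n-\lambda_1 v_1)\weak N(0,\Sigma)$ with $\Sigma$ given by \eqref{eqn:janson-Sigma}. To identify the limiting mean I would avoid computing the Perron eigenvector explicitly and instead invoke Corollary~\ref{lemma-hk-pk-conv}: since an eventually-constant $f$ is bounded, it satisfies $f(k)\le Ck^\beta$ for any $\beta\in(0,1)$, so $P_k(n)\asconv p_k$ for the limiting degree distribution \eqref{eqn-pk-limit}, and hence $P_{>\kappa}(n)=1-\sum_{k\le\kappa}P_k(n)\asconv p_{>\kappa}$. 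By uniqueness of almost sure limits, $\lambda_1 v_1=(p_1,\ldots,p_\kappa,p_{>\kappa})$, so the central limit theorem reads $\sqrt n\bigl(P_1(n)-p_1,\ldots,P_\kappa(n)-p_\kappa,P_{>\kappa}(n)-p_{>\kappa}\bigr)\weak N(0,\Sigma)$, which is the first assertion.

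For the continuity claim I would note that $\theta\mapsto A_\kappa$ is continuous, since its entries are products and differences of the values $f_\theta(i)$; by Perron--Frobenius the simple eigenvalue $\lambda_1$ and its normalized eigenvector $v_1$ then depend continuously on $\theta$, and hence so does $(p_k)=\lambda_1 v_1$. Continuity of $\Sigma$ follows from the integral formula \eqref{eqn:janson-Sigma}, because $\psi(s,A)$ and $B$ are continuous in $A$, the spectral gap $\lambda_1-2\Re\lambda_2$ stays bounded away from $0$ on a neighborhood, and this uniform gap furnishes an integrable dominating bound legitimizing passage of the limit under the integral sign.

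Finally, for the $\bbR^\infty$ statement I would use that weak convergence in $\bbR^{\bbN_+}$ is equivalent to convergence of all finite-dimensional marginals, as in the proof of Proposition~\ref{prop:asymp-normality-affine}: for any fixed $m$, choosing $\kappa\ge m$ with the eigenvalue condition holding and projecting the $(\kappa+1)$-dimensional normal limit onto its first $m$ coordinates via the continuous mapping theorem gives the limit of $\bigl(\sqrt n(P_k(n)-p_k);\,k\le m\bigr)$; these marginals are automatically consistent across $\kappa$ because the $P_k(n)$ are the same random variables in every truncation, so the full sequence converges to a centered Gaussian process. The step I expect to be the main obstacle is precisely the continuity of the covariance in $\theta$: one must show that the eigenvalue condition persists on an open set and supply a dominating bound for \eqref{eqn:janson-Sigma} that is \emph{uniform} over a neighborhood, rather than merely pointwise, in order to pass the limit under the integral.
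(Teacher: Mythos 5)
Your proposal is correct and follows essentially the same route as the paper: the same finite urn scheme with $a_i=f(i\wedge\kappa)$ and $\xi_{\kappa+1}=(1,0,\ldots,0)^T$, irreducibility of $A_\kappa$, Proposition~\ref{prop:janson-conv} under the assumed eigenvalue condition, and finite-dimensional marginals for the $\bbR^\infty$ claim. The only (harmless) variations are that you identify $\lambda_1 v_1$ with $(p_1,\ldots,p_\kappa,p_{>\kappa})$ via Corollary~\ref{lemma-hk-pk-conv} and uniqueness of almost sure limits instead of leaving $(p_k)$ implicit, and you spell out the dominated-convergence argument for continuity of $\Sigma$ that the paper only sketches by ``inspecting the quantities'' (the paper instead devotes its care to continuity of the Perron eigenvector via the min--max formula and the adjugate matrix).
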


\begin{proof}
	For the given $\k$ define an urn process as indicated preceding the proposition.
	The matrix $A_\k$ is given by
	\begin{equation}
		\label{eqn:A-K}
		A_\k=\left( \begin{matrix} 0      & f(2)      & f(3)      & f(4)      & \cdots & f(\k)      & f(\k+1) \\
               f(1)   & -f(2)\ \, & 0         & 0         & \cdots & 0          & 0       \\
               0      & f(2)      & -f(3)\ \, & 0         & \cdots & 0          & 0       \\
               0      & 0         & f(3)      & -f(4)\ \, & \cdots & 0          & 0       \\
               0      & 0         & 0         & f(4)      & \cdots & 0          & 0       \\
               \vdots & \vdots    & \vdots    & \vdots    &        & \vdots     & \vdots  \\
               0      & 0         & 0         & 0         & \cdots & -f(\k)\ \, & 0       \\
               0      & 0         & 0         & 0         & \cdots & f(\k)      & 0       \\\end{matrix}\right).
	\end{equation}
	The matrix  $A_\k$ is irreducible, and its eigenvalues satisfy $\Re \lambda_2 < \lambda_1/2$, by assumption.
	Thus, the vector of empirical degrees
	$\bigl(P_1(n),\ldots, P_\k(n), P_{>\k}(n)\bigr)$, suitably centered and scaled,
	is asymptotically normal by Proposition~\ref{prop:janson-conv}. If this is true
	for every sufficiently large $\k\in\bbN_+$, the infinite sequence in the final assertion of the proposition
	converges as well.

	It remains to sort out the continuity of the asymptotic mean and covariance if the \pa
	function depends continuously on a parameter.
	Denote the parameter by $\q$ and set $q=\k+1$.
	By its definition the map $\theta \mapsto A_\k(\theta)$ inherits the continuity from the \pa function.
	Employing the $\min$-$\max$ formula (\citet[Corollary 8.1.31]{horn2012matrix}),
	\[
		\lambda_1(\theta) = \max_{x > 0} \min_{ 1\le i \le q} \frac{1}{x_i} \sum_{j=1}^q \bigl(A_\k(\theta)\bigr)_{ij} x_j,
	\]
	where $x >0$ is understood component-wise.
	By the maximum theorem (e.g.\ \citet[page 229]{ok2011real}), $\theta \mapsto \lambda_1(\theta)$ is continuous.
	The corresponding eigenvector can be obtained as $\adj(C(\q))e_1$, for $\adj(C)$ the adjugate matrix of
	$C=A_\k - \lambda_1 I_q$. This follows, because $C\adj(C)  = (\det C) I_q  = 0$ and $\adj(C)e_1\not=0$. The latter can be seen from
	the fact that the range of $C$, which has dimension $q-1$, is the null space of $\adj(C)$, as $\adj(C) C=0$,  and
	$e_1$ is not in the range of $C$.
	Since the adjugate matrix depends continuously on $A$ and $\l_1$, so does the eigenvector $\adj(C(\q))e_1$,
	and this remains valid after scaling.
	Inspecting the quantities in \eqref{eqn:janson-B}--\eqref{eqn:janson-psi} and \eqref{eqn:janson-Sigma}, we see that
	the asymptotic covariance matrix is continuous.
\end{proof}

\paragraph*{Acknowledgements} We thank Meiyue Shao for his comments regarding the Perron--Frobenius eigenvalue of a nonnegative, irreducible matrix.

\newpage

\bibliographystyle{wangtengyao}
\bibliography{pam.bib, stupidreferee2.bib}

\end{document}